\definecolor{seagreen}{RGB}{46,139,87}
\definecolor{maroon}{RGB}{128,0,0}
\definecolor{darkviolet}{RGB}{148,0,211}
\definecolor{twelve}{RGB}{100,100,170}
\definecolor{thirteen}{RGB}{100,150,50}
\definecolor{fourteen}{RGB}{200,0,0}
\definecolor{fifteen}{RGB}{0,200,0}
\definecolor{sixteen}{RGB}{0,0,200}
\definecolor{seventeen}{RGB}{200,0,200}
\definecolor{eighteen}{RGB}{0,200,200}
\newtheorem{thm}{Theorem}[section]
\newtheorem*{thm*}{Theorem}
\newtheorem{lem}[thm]{Lemma}
\newtheorem{cor}[thm]{Corollary}
\newtheorem{prop}[thm]{Proposition}
\newtheorem{notn}[thm]{Notation}
\theoremstyle{definition}
\newtheorem{defin}[thm]{Definition}
\newtheorem{exm}[thm]{Example}
\theoremstyle{remark}
\newtheorem{rem2}[thm]{Remark}
\def\c{\mathbb{C}}
\def\f{\mathbb{F}}
\def\m{\mathbb{M}}
\def\q{\mathbb{Q}}
\def\r{\mathbb{R}}
\def\z{\mathbb{Z}}
\def\cf{\mathcal{F}}
\def\cp{\mathcal{P}}
\def\holim{\underset{\longleftarrow}{\operatorname{holim}}}
\def\Spec{\operatorname{Spec}}
\def\lim{\operatorname{lim}}
\newcommand{\chara}{\operatorname{char}}
\theoremstyle{theoremstyle}
\newtheorem*{theorem*}{Theorem}
\newtheorem*{proposition*}{Proposition}
\newtheorem*{corollary*}{Corollary}
\newtheorem{remark*}{Remark}
\newtheorem{defn*}{Definition}
\theoremstyle{definition}
\theoremstyle{theoremstyle}
\author{J.D. Quigley}
\title[Motivic Mahowald invariants over general base fields]{Motivic Mahowald invariants over general base fields}
\begin{document}
\maketitle

\begin{abstract}
The motivic Mahowald invariant was introduced in \cite{Qui19a} and \cite{Qui19b} to study periodicity in the $\mathbb{C}$- and $\mathbb{R}$-motivic stable stems. In this paper, we define the motivic Mahowald invariant over any field $F$ of characteristic not two and use it to study periodicity in the $F$-motivic stable stems. In particular, we construct lifts of some of Adams' classical $v_1$-periodic families \cite{Ada66} and identify them as the motivic Mahowald invariants of powers of $2+\rho \eta$. 
\end{abstract}

\tableofcontents

\section{Introduction} 

The Mahowald invariant 
$$M : \pi_*(S^0) \rightsquigarrow \pi_{*}(S^0), \quad \alpha \mapsto M(\alpha)$$ is a construction for producing nontrivial classes in the stable homotopy groups of spheres from classes in lower stable stems. The chromatic filtration organizes the stable stems into $v_n$-periodic families \cite{Rav84}. These $v_n$-families are completely understood when $n\leq 1$ and fairly well-understood when $n \leq 2$, but they are much more mysterious for larger $n$. Computations of  Sadofsky \cite{Sad92}, Mahowald and Ravenel \cite{MR93}, Bruner \cite{Bru98}, Behrens \cite{Beh06} \cite{Beh07}, and the author \cite{Qui19c} suggest that the Mahowald invariant of a $v_n$-periodic class is often $v_{n+1}$-periodic. Therefore the Mahowald invariant gives a (conjectural) means of studying mysterious $v_n$-periodic families, $n \geq 1$, by relating them to less mysterious $v_{n-1}$-periodic families. 

The motivating example of this phenomenon comes from the pioneering work of Mahowald and Ravenel \cite{MR93}. Adams constructed $v_1$-periodic elements $\{v_1^{4i} \eta^j, v_1^{4i}8\sigma : 1 \leq j \leq 3, i \geq 0\}$ in \cite{Ada66}. These can be constructed using the non-nilpotent $v_1$-self-map on the mod two Moore spectrum, iterated Toda brackets, and connective real K-theory. We review their definition in Section \ref{Section:Prime}, but for now we emphasize that their nontriviality depended on geometric input related to connective real K-theory. Mahowald and Ravenel gave another construction of these $v_1$-periodic elements using the Mahowald invariant. More precisely, they showed that for all $i \geq 0$, 
$$M(2^{4i+j}) \ni \begin{cases} v_1^{4i}\eta^j \quad & \text{ if } 1 \leq j \leq 3, \\
v^{4(i-1)}_1 8\sigma \quad & \text{ if } j=0.
\end{cases}$$
Although this computation also uses connective real K-theory, we regard this as an independent construction of Adams' $v_1$-periodic families since it does not appeal to the existence of a non-nilpotent $v_1$-self-map on the mod two Moore spectrum. 

Periodic phenomena in the motivic stable stems is not well-understood; we briefly recall what is known in the $\c$-motivic case. Work of Levine allows one to lift classical $v_n$-periodic families into the $\c$-motivic stable stems. Subsequent work of Andrews \cite{And18}, Gheorghe \cite{Ghe17b}, and Krause \cite{Kra18} shows that in addition to these $v_n$-periodic lifts, there exist ``exotic periodic families"  in the $\c$-motivic stable stems. For example, Andrews constructed exotic $w_1$-periodic families of elements analogous to Adams' $v_1$-periodic families mentioned above. Despite these interesting results, we still lack a complete understanding of periodic phenomena in the $\c$-motivic stable stems. Over other fields of characteristic zero, even less is known. We refer the reader to \cite[Sec. 4-5]{IO18} for an extensive discussion of the $\r$- and $\c$-motivic stable stems. 

The $\c$-motivic Mahowald invariant 
$$M^\c : \pi_{**}^\c(S^{0,0}) \to \pi_{**}^\c(S^{0,0}), \quad \alpha \mapsto M^\c(\alpha)$$
was introduced in \cite{Qui19a} to study these families over $\Spec(\c)$. In particular, it was shown that 
$$M^\c(2^{4i+j}) \ni \begin{cases} v_1^{4i}\eta \quad & \text{ if } j=1, \\
 v^{4i}_1 \tau \eta^j \quad & \text{ if } j = 2,3, \\
v^{4(i-1)}_1 8\sigma \quad & \text{ if } j=0.
\end{cases}$$
and
$$
M^\c(\eta^{4i+j}) \ni \begin{cases} w_1^{4i} \nu \quad & \text{ if } j=1, \\
w^{4i}_1 \nu^j \quad & \text{ if } j=2,3, \\
w^{4(i-1)}_1 \eta^2 \eta_4 \quad & \text{ if } j=0.
\end{cases}$$
In \cite{Qui19b}, $\r$-motivic lifts of these $v_1$- and $w_1$-periodic elements were constructed and identified as $\r$-motivic Mahowald invariants. The goal of this paper is to obtain analogous computations over general base fields of characteristic not two.

Very little is currently known about periodicity in the motivic stable stems over a general base field $F$ of characteristic not two. Work of Morel \cite{Mor12} shows that $\pi^F_{m,n}(S^{0,0}) = 0 $ for $m<n$, and the $0$-line $\bigoplus_{n \in \z} \pi^F_{n,n}(S^{0,0})$ is Milnor-Witt K-theory $K^{MW}_*(F)$. R{\"o}ndigs-Spitzweck-{\O}stv{\ae}r  computed the $1$-line $\bigoplus_{n \in \z} \pi^F_{n+1,n}(S^{0,0})$ in \cite{RSO19}, and they showed that there is a short exact sequence of Nisnevich sheaves
$$0 \to K^M_{2-n}(-)/24 \to \pi_{n+1,n}^{(-)}(S^{0,0}) \to \pi_{n+1,n}^{(-)} f_0(KQ)$$
where $K^M$ is Milnor K-theory and $f_0(KQ)$ is the effective cover of Hermitian K-theory. Other infinite computations have been done after inverting the Hopf map $\eta \in \pi_{1,1}^F(S^{0,0})$ by Guillou-Isaksen \cite{GI15}\cite{GI16}, Andrews-Miller  \cite{AM17}, R{\"o}ndigs \cite{Ron16}, and Wilson \cite{Wil18}. We refer the reader to \cite[Sec. 6]{IO18} for a survey of other computations over general base fields. 

Our first main theorem proves that many of Adams' $v_1$-periodic families can be lifted to the $F$-motivic stable stems, where $F$ is an arbitrary field of characteristic not two. The first part of the theorem applies in the case $\chara(F) > 2$. Recall that over $\Spec(\c)$, one has $v_1$-periodic families of the form $\{ v^{4i}_1 \eta,  v^{4i}_1 \tau \eta^2,  v^{4i}_1 \tau \eta^2, v^{4i}_1 8 \sigma \}_{i \geq 0}$ which can be constructed as iterated Toda brackets using \cite{Qui19a}. Moreover, the Betti realization \cite{MV99} of these classes recovers Adams' classical elements \cite{Ada66} with the same names. Let $g : F \to \overline{F}$ be the inclusion of $F$ into its algebraic closure and note that $\pi_{s,w}^{\overline{F}}(S^{0,0}) \cong \pi_{s,w}^{\c}(S^{0,0})$ when $s \geq w \geq 0$ or $s<w$ by \cite{WO17}. 

\begin{thm*}[Theorem \ref{Thm:FamiliesGeneral}, Part (1)]
Suppose that $\chara(F) > 2$. Let $\alpha \in \{ v^{4i}_1 \eta,  v^{4i}_1 \tau \eta^2,  v^{4i}_1 \tau \eta^3, v^{4i}_1 8 \sigma \}_{i \geq 0} \subset \pi_{**}^{\overline{F}}(S^{0,0})$. There exists a nontrivial class $\tilde{\alpha} \in \pi_{**}^F(S^{0,0})$ such that $g^*(\tilde{\alpha}) = \alpha$. 
\end{thm*}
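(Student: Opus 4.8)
The plan is to build $\tilde\alpha$ over $F$ by imitating the $\c$-motivic construction of $\alpha$ from \cite{Qui19a}, systematically replacing the relation $2\eta=0$ (which fails over general $F$) by its characteristic-free refinement $(2+\rho\eta)\eta=0$; naturality of Toda brackets along $g^*$ will then give $g^*(\tilde\alpha)=\alpha$, and nontriviality of $\tilde\alpha$ is automatic. Indeed, $g^*$ is a ring homomorphism, so once $g^*(\tilde\alpha)=\alpha$ and $\alpha\neq0$ --- which holds, since $\pi_{**}^{\overline F}(S^{0,0})\cong\pi_{**}^{\c}(S^{0,0})$ in the stated range by \cite{WO17} and there $\alpha$ is the lift of one of Adams' nonzero $v_1$-periodic elements --- we get $\tilde\alpha\neq0$.

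Concretely, I would first recall from \cite{Qui19a} the presentation of $\alpha$ over $\c$, hence over $\overline F$ (using that the isomorphism of \cite{WO17} respects the universal classes $\eta,\sigma,\tau,2$), as an iterated Toda bracket: roughly, along the lines of $\mu_1=\eta$ and $\mu_{8i+9}\in\langle\mu_{8i+1},\,2,\,8\sigma\rangle$, with the companion families obtained by multiplying by $\tau\eta$ or $\tau\eta^2$, or by running the construction starting from $8\sigma$ in place of $\eta$. Over $F$ I would instead form, inductively, $\tilde\mu_{8i+9}\in\langle\tilde\mu_{8i+1},\,2+\rho\eta,\,8\sigma\rangle$. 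Here $\eta,\sigma$, and hence $2+\rho\eta\in\pi_{0,0}^F(S^{0,0})$ and $8\sigma$, are the universal motivic classes: they are defined over $\Spec\z[1/2]$, so over any $F$ with $\chara(F)\neq2$, and $g^*$ sends each to its counterpart over $\overline F$; in particular $g^*(\rho)=0$ as $\overline F$ is quadratically closed, so $g^*(2+\rho\eta)=2$. One could equivalently organize the induction as a $v_1^4$-self-map on the mod-$(2+\rho\eta)$ motivic Moore spectrum, reducing its existence to \cite{Qui19b} together with base change.

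The heart of the proof is to check that these brackets are defined over $F$, i.e.\ that the bracket-defining products vanish in $\pi_{**}^F(S^{0,0})$: $(2+\rho\eta)\cdot8\sigma=0$, $(2+\rho\eta)\cdot\tilde\mu_{8i+1}=0$ for every $i$ (so the induction may proceed), and the analogues for the companion families. The case $i=0$ of the first is Morel's universal relation $(2+\rho\eta)\eta=0$, and in the smallest stems one can argue directly from Morel's computation of the $0$-line ($\pi_{0,0}^F(S^{0,0})=GW(F)$, in which $\rho\eta=\langle-1\rangle-1$), the R{\"o}ndigs--Spitzweck--{\O}stv{\ae}r description of the $1$-line, and the vanishing of $\pi_{m,n}^F(S^{0,0})$ for $m<n$. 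Past that, one must propagate up the tower: I expect this to lean on the fact that $2+\rho\eta$ appears as a middle entry of the bracket defining $\tilde\mu_{8i+1}$, on the shuffle/juggling identities for Toda brackets, on $\eta$-divisibility of the products at issue, and on comparison with $\overline F$ (where the products are known to vanish) and with the $\r$-motivic computations of \cite{Qui19b}. This is the step I expect to be the main obstacle, precisely because $\pi_{**}^F(S^{0,0})$ is strictly larger and obeys different relations than $\pi_{**}^{\c}(S^{0,0})$ --- already $2\eta\neq0$ in general --- so the $\c$-motivic vanishings cannot simply be quoted, and the dependence on $\rho$ must be tracked throughout the induction.

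Finally, naturality of the Toda bracket along the ring map $g^*\colon\pi_{**}^F(S^{0,0})\to\pi_{**}^{\overline F}(S^{0,0})$ yields $g^*\langle\tilde\mu_{8i+1},\,2+\rho\eta,\,8\sigma\rangle\subseteq\langle\mu_{8i+1},\,2,\,8\sigma\rangle\ni\mu_{8i+9}$, and similarly for the companion families; choosing the nullhomotopies that define the $F$-bracket to map under $g^*$ to those used over $\overline F$ (equivalently, comparing indeterminacies) pins down a representative $\tilde\alpha$ with $g^*(\tilde\alpha)=\alpha$ exactly. With the nontriviality observation of the first paragraph, this proves the theorem.
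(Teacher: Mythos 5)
Your outline defers precisely the step that is the actual content of the theorem, so as written it has a genuine gap. To define the brackets $\langle\tilde\mu_{8i+1},\,2+\rho\eta,\,8\sigma\rangle$ over $F$ you must prove the vanishing of $(2+\rho\eta)\cdot 8\sigma$ and, inductively, of $(2+\rho\eta)\cdot\tilde\mu_{8i+1}$ (and the companion products) in $\pi_{**}^{F}(S^{0,0})$; over a general $F$ with $\chara(F)>2$ one can have $\rho\neq 0$ (e.g.\ $\mathbb{F}_q$ with $q\equiv 3\pmod 4$), these groups carry Milnor--Witt/$\rho$-divisible contributions absent over $\c$, and the needed vanishing cannot be quoted from the $\c$-motivic case. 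The introduction's example shows this obstruction is real, not formal: over $\r$ the class $16\sigma$ is nonzero, so the analogous bracket $\langle\sigma,16,-\rangle$ simply fails to exist. Your proposed fallback --- a $v_1^4$-self-map on the mod-$(2+\rho\eta)$ Moore spectrum obtained ``from \cite{Qui19b} together with base change'' --- does not apply in the case at hand: base change $f^*\colon SH(k)\to SH(F)$ is only available along field extensions $k\to F$, and $\r$ is not a subfield of any field of characteristic $q>2$ (nor of most characteristic-zero fields). The only universally available source is the prime field $\mathbb{F}_q$, over which no such self-map or bracket calculus is established; producing it would require exactly the kind of $\mathbb{F}_q$-motivic homotopy computations your sketch postpones. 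A secondary gap: even granting the brackets, naturality only gives that $g^*(\tilde\alpha)$ lies in the $\overline{F}$-bracket, a coset with possibly nonzero indeterminacy, so ``choosing compatible nullhomotopies'' needs an actual argument before one may conclude $g^*(\tilde\alpha)=\alpha$.

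For comparison, the paper avoids all homotopy-level Toda bracket or self-map constructions over $F$. It works on the Adams $E_2$-page: the classes $P^ih_1$, $P^i\tau h_1^2$, $P^i\tau h_1^3$, $P^ih_0^3h_3$ are produced in $Ext_{A^{\mathbb{F}_q}}$ via Lemma \ref{Lem:KW} and the $\rho$-Bockstein spectral sequence (Lemmas \ref{Lem:Extq1} and \ref{Lem:Extq3}), shown to be permanent cycles purely by sparseness (the vanishing range of \cite[Prop.~5.4]{Qui19a}, since Adams differentials preserve weight and decrease stem by one), and shown to detect nonzero classes by base change to $\overline{\mathbb{F}}_q$. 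Theorem \ref{Thm:FamiliesGeneral}(1) then follows by base-changing along $\mathbb{F}_q\to F$ (permanent cycles map to permanent cycles) and along $F\to\overline{F}$, where nontriviality comes from the isomorphism of Theorem \ref{Thm:WO}. If you want to salvage your route, you would need to first carry out the $\mathbb{F}_q$-level vanishing and self-map constructions yourself; as it stands the proposal is an outline whose crucial steps are missing.
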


Perioidicity is more subtle in characteristic zero. Although one has motivic analogs of the classical periodicity operators and $v^4_1$-periodic families over $\Spec(\c)$ \cite[Sec. 5]{Qui19a}, some of these are not well-defined over $\Spec(\r)$ or $\Spec(\q)$. 

\begin{exm}
The following three phenomena occur working over $\Spec(\r)$ but not over $\Spec(\c)$.
\begin{enumerate}
\item The class $16\sigma \in \pi_{7,4}^\r(S^{0,0})$ is nonzero \cite[Fig. 3]{DI16a}. In particular, the Toda bracket $v_1^4(-) = \langle \sigma, 16, \alpha \rangle$ cannot be defined. 
\item Recall from \cite[Thm. 5.12]{Qui19a} that $8\sigma \in M^{\c}(2^4) \subset \pi_{7,4}^\c(S^{0,0})$. As a consequence of the previous fact, one can show that $8\sigma \notin M^\r((2+\rho\eta)^4) \subset \pi_{7,4}^\r(S^{0,0})$ but $16\sigma \in M^\r((2+\rho\eta)^4) \subset \pi_{7,4}^\r(S^{0,0})$. This suggests that the $F$-motivic Mahowald invariant ``detects" differences in the order of $(2+\rho\eta)$-torsion in the image of a conjectural $F$-motivic $J$-homomorphism.
\item The class $Ph_1 \in Ext^{5,14,5}_{A^\c}(\m_2^\c,\m_2^\c)$ detects $v_1^4 \eta \in \pi_{9,5}^\c(S^{0,0})$, but it is not in the image of base change along $\r \to \c$ by \cite[Lem. 5.7]{DI16a}. In particular, the class $Ph_1$ does not survive in the $\rho$-Bockstein spectral sequence which calculates $Ext^{***}_{A^\r}(\m_2^\r,\m_2^\r)$ from $Ext^{***}_{A^\c}(\m_2^\c,\m_2^\c)$. 
\end{enumerate}
\end{exm}

Despite these obstructions to defining $v_1$-periodic families in characteristic zero, we are able to construct two nontrivial infinite families. Let $g : F \to \overline{F}$ be the inclusion of $F$ into its algebraic closure. 

\begin{thm*}[Theorem \ref{Thm:FamiliesGeneral}, Part (2)]
Suppose that $\chara(F) = 0$. Let $\alpha \in \{ v^{4i}_1 \tau \eta^2, v^{4i}_1 \tau \eta^3\}_{i \geq 0} \subset \pi_{**}^{\overline{F}}(S^{0,0})$. There exists a nontrivial class $\tilde{\alpha} \in \pi_{**}^F(S^{0,0})$ such that $g^*(\tilde{\alpha}) = \alpha$. 
\end{thm*}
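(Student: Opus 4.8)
The plan is to reduce to the case $F = \q$ and then construct the lifts over $\q$ as values of the motivic Mahowald invariant on powers of $2+\rho\eta$, transporting the $\c$-motivic computation of \cite{Qui19a} and keeping careful track of which ingredients survive base change.

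\emph{Reduction to $\q$.} Since $\chara(F) = 0$ there is a unique ring homomorphism $\q \to F$, and base change along it induces $\pi_{**}^\q(S^{0,0}) \to \pi_{**}^F(S^{0,0})$. The composite $\q \to F \to \overline{F}$ factors through $\q \to \overline{\q}$, and the identifications $\pi_{s,w}^{\overline{F}}(S^{0,0}) \cong \pi_{s,w}^{\c}(S^{0,0})$ of \cite{WO17} are compatible with base change from $\q$ (they hold in the bidegrees $(8i+j,4i)$ occupied by our classes, since there $s \geq w \geq 0$). It therefore suffices to produce a class $\tilde\alpha^\q \in \pi_{**}^\q(S^{0,0})$ whose image under $\q \to \c$ is $v^{4i}_1\tau\eta^j$: taking $\tilde\alpha$ to be the image of $\tilde\alpha^\q$ in $\pi_{**}^F(S^{0,0})$ then yields $g^*(\tilde\alpha) = \alpha$, and nontriviality of $\tilde\alpha$ is automatic because $\alpha \neq 0$.

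\emph{Construction over $\q$.} Let $h = 2 + \rho\eta \in \pi^\q_{0,0}(S^{0,0}) = GW(\q)$; one has the relation $h\eta = 0$ in $K^{MW}_*(\q)$ (hyperbolic forms vanish in the Witt ring), and $h$ is the class whose powers are fed into the $F$-motivic Mahowald invariant. I would take $\tilde\alpha^\q$ to be an element of $M^\q(h^{4i+j})$ for $j = 2,3$ and compute this Mahowald invariant in two stages. First, at the level of $\Ext$ over the $\q$-motivic Steenrod algebra — accessed via the $\rho$-Bockstein spectral sequence from the $\c$-motivic $\Ext$ already analyzed in \cite[Sec. 5]{Qui19a} — compute the algebraic Mahowald invariant of $h^{4i+j}$ and show its detecting class is a $\tau$-multiple of the appropriate $v_1$-periodic family, the one carrying an extra power of $h_1$ relative to the class $Ph_1$ detecting $v^4_1\eta$. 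Second, run the motivic Adams spectral sequence to promote this to a topological statement, producing $\tilde\alpha^\q \in M^\q(h^{4i+j})$. Because base change along $\q \to \c$ is compatible with the formation of the Mahowald invariant (it commutes with the relevant stunted-projective-space cell structures, connecting maps, and Toda brackets), the image of $\tilde\alpha^\q$ lies in the $\c$-motivic Mahowald invariant $M^\c(2^{4i+j})$, which by \cite[Sec. 5]{Qui19a} contains $v^{4i}_1\tau\eta^j$; a choice of $\tilde\alpha^\q$ matching the detecting class then identifies its image as $v^{4i}_1\tau\eta^j$, as needed.

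\emph{The main obstacle.} The crux is the survival statement in the first stage, namely that for $j = 2, 3$ the relevant $\Ext$ classes persist through the $\rho$-Bockstein spectral sequence and no crossing differential or hidden extension in the motivic Adams spectral sequence interferes in the relevant stems. The preceding example shows that the analogous programme genuinely fails for $j = 0$ and $j = 1$: over $\r$ (hence over $\q$) one has $16\sigma \neq 0$, which breaks the naive $v^4_1$-periodicity Toda bracket and forces $M^{\r}((2+\rho\eta)^4)$ to contain $16\sigma$ rather than $8\sigma$, while $Ph_1$ does not survive the $\rho$-Bockstein spectral sequence, so the Adams-filtration argument for $v^4_1\eta$ has no $\q$-motivic counterpart. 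The heart of the proof is thus to check that the extra factor of $\tau$ — equivalently, the extra power of $h_1$ — moves the $j = 2, 3$ classes out of the range where these obstructing differentials act. Granting this, the construction above produces the desired lift $\tilde\alpha$.
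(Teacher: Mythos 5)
Your reduction to $F=\q$ and the final base-change/nontriviality step match the paper's strategy, but the construction over $\q$ has two genuine gaps. First, you propose to access $Ext_{A^{\q}}$ ``via the $\rho$-Bockstein spectral sequence from the $\c$-motivic $Ext$.'' That tool is not available over $\q$: $\m_2^{\q}$ is built from $K^M_*(\q)/2$, which is already infinite in degree one ($\q^\times/(\q^\times)^2$), so it is not generated over $\m_2^{\c}$ by $\rho$, and the Hill-style $\rho$-Bockstein spectral sequence that works over $\r$, $\f_q$ and $\q_\nu$ does not compute $Ext_{A^{\q}}$. The paper instead defines the periodicity operator through the $\q$-motivic May spectral sequence and establishes nontriviality and the permanent-cycle statements by the motivic Hasse principle: the map $K^M_*(\q)/2 \to \prod_\nu K^M_*(\q_\nu)/2$ induces injections on May $E_1$-pages and a map of Adams spectral sequences, reducing everything to $\r$ (from \cite{Qui19b}) and to the completions $\q_\nu$ of Section \ref{Section:pAdicRationals}, where genuine $\rho$-Bockstein arguments apply. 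Without an input of this kind your ``first stage'' over $\q$ has no foundation.

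Second, routing the construction through $M^{\q}((2+\rho\eta)^{4i+j})$ requires more than the compatibility you assert. Base change does commute with the stunted lens spectra, but this only yields the inequality of Lemma \ref{Lem:Squeeze}: the left-hand composite defining the Mahowald invariant could become nontrivial over $\q$ at a smaller $N$ than over $\c$, in which case $M^{\q}$ lives in a lower stem and its image over $\c$ need not be $v^{4i}_1\tau\eta^j$. Ruling this out is exactly the content of Proposition \ref{Prop:16NullRationals} (the degree $16$ map is null on the subcomplexes $L_m$ over $\q$), itself proved via the Hasse principle, together with low-dimensional computations; your ``main obstacle'' paragraph concerns Bockstein and Adams differentials on the detecting classes and does not address this point. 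Note also that in the paper the identification of these classes as $\q$-motivic Mahowald invariants is proved after, and using, the lifted families (they supply the lifts $\beta'$ needed in Lemma \ref{Lem:Squeeze}), so your ordering would have to recreate an MR93-style independent computation from scratch. For the statement at hand the Mahowald invariant is an unnecessary detour: once the $Ext$ classes $P^i_{\q}\tau h_1^2$ and $P^i_{\q}\tau h_1^3$ are shown to be nonzero permanent cycles over $\q$, detection plus base change along $\q \to F \to \overline{F}$ and Theorem \ref{Thm:WO} already finish the proof, which is the paper's argument.
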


Note that since $\tau \eta^4 = 0$ in $\pi_{**}^{\bar{F}}(S^{0,0})$, these classes cannot be seen using Wilson's $\eta$-local computations over the rationals \cite{Wil18}. Therefore the theorem provides two new infinite families in the $F$-motivic stable stems for any field $F$ of characteristic zero. 

In this work, we extend the definition of the motivic Mahowald invariant over any field $F$ of characteristic not two. Let $M^F(\alpha)$ denote the $F$-motivic Mahowald invariant of a class $\alpha$ in the $F$-motivic stable stems. In \cite{Qui19a} and \cite{Qui19b}, we showed that the infinite families above are realized as the $\c$-motivic Mahowald invariants of $2^i$, $i \geq 1$, and the $\r$-motivic Mahowald invariants of $(2+\rho \eta)^i$, $i \equiv 2,3 \mod 4$. Our second main theorem is that this is true over any field of characteristic not two. 

\begin{thm*}[Theorem \ref{Thm:MotMI2i}]
Suppose that $\chara(F) > 2$. Then the $F$-motivic Mahowald invariant of $(2+\rho \eta)^i$ is given by
$$
M^F((2+\rho\eta)^{4i+j}) \ni \begin{cases}
v^{4i}_1 \eta \quad & \text{ if } j =1, \\
v^{4i}_1 \tau \eta^2 \quad & \text{ if } j=2,\\
v^{4i}_1 \tau \eta^3 \quad & \text{ if } j=3, \\
v^{4i}_1 8\sigma \quad & \text{ if } j=4.
\end{cases}
$$
Suppose that $\chara(F) = 0$. Then the $F$-motivic Mahowald invariant of $(2+\rho \eta)^i$ is given by
$$
M^F((2 + \rho \eta)^{4i+j} \ni \begin{cases}
v^{4i}_1 \tau \eta^2 \quad & \text{ if } j=2,\\
v^{4i}_1 \tau \eta^3 \quad & \text{ if } j=3. 
\end{cases}
$$
\end{thm*}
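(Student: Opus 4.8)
The plan is to reduce the computation to the $\overline F$-motivic case --- which by \cite{WO17} agrees in the relevant bidegrees with the $\c$-motivic computation of \cite{Qui19a} --- and, in characteristic $0$, to the $\r$-motivic case of \cite{Qui19b}, transferring the answer along base change $g : F \to \overline F$ and controlling the difference between the $F$- and $\overline F$-motivic inputs with a $\rho$-Bockstein spectral sequence. Throughout I use that $2+\rho\eta$ is the hyperbolic class $h \in GW(F) = \pi^F_{0,0}(S^{0,0})$, that $\eta h = 0$, and that $h^{k} = 2^{k-1}h$ for $k \ge 1$; this describes the $h$- and $\eta$-divisibility of $(2+\rho\eta)^{4i+j}$ in the tower of $F$-motivic stunted projective spectra $\{\mathbb{P}^\infty_{-N}\}_{N \ge 0}$, which is what plays the role of $2$-divisibility in the classical argument of \cite{MR93}. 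Recall that $M^F(\beta)$ is read off by locating the largest $N$ for which the image of $\beta$ in $\holim_N \mathbb{P}^\infty_{-N}$ lifts over the bottom-cell collapse $\mathbb{P}^\infty_{-N} \to \mathbb{P}^\infty_{-N+1}$ and then projecting the lift to $\mathbb{P}^{-N}_{-N}$.

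First I would organize everything through the $F$-motivic Adams spectral sequence, so that locating the detecting skeleton of $(2+\rho\eta)^{4i+j}$ and naming the resulting bottom-cell class becomes a question about $\Ext$ over the $F$-motivic Steenrod algebra $A^F$ with coefficients in the mod~$2$ motivic cohomology of $\mathbb{P}^\infty_{-N}$. Because $A^F$ and $\m_2^F = K^M_*(F)/2\,[\tau]$ differ from their $\overline F$-motivic ($\cong\c$-motivic) analogues only through $\rho$ and higher Milnor $K$-theory classes, in the bidegrees at issue the relevant $\Ext$ is computed by a $\rho$-Bockstein spectral sequence whose input is the $\c$-motivic $\Ext$ --- so that its algebraic Mahowald invariant is the one recorded in \cite{Qui19a} --- and whose differentials are modeled on the $\r$-motivic ones computed in \cite{Qui19b}. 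When $\chara(F) > 2$, the powers of $\rho$ needed for those differentials vanish in $\m_2^F$ (already $\rho^2 = 0$ over a finite field), so the $\rho$-Bockstein degenerates in the range at issue; all four detecting classes survive, and Theorem~\ref{Thm:FamiliesGeneral}, Part~(1), matches the corresponding bottom-cell projections with the lifts of $v_1^{4i}\eta$, $v_1^{4i}\tau\eta^2$, $v_1^{4i}\tau\eta^3$, $v_1^{4i}8\sigma$, giving $j = 1,2,3,4$. When $\chara(F) = 0$ the $\rho$-Bockstein behaves as over $\r$: the motivic $Ph_1$-type class detecting $v_1^{4i}\eta$ and the class detecting $v_1^{4i}8\sigma$ do not survive (compare the Example and \cite[Lem.~5.7]{DI16a}), while the classes detecting $v_1^{4i}\tau\eta^2$ and $v_1^{4i}\tau\eta^3$ do, and Theorem~\ref{Thm:FamiliesGeneral}, Part~(2), supplies their lifts, giving $j = 2,3$. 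The relation $\tau\eta^4 = 0$ over $\overline F$ truncates the pattern after $j = 3$.

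The hard part will be the $\rho$-Bockstein bookkeeping. Base change along $g : F \to \overline F$ is compatible with the tower $\{\mathbb{P}^\infty_{-N}\}$ and sends $2+\rho\eta$ to $2$, but it yields only a \emph{lower} bound on the detecting skeleton of $(2+\rho\eta)^{4i+j}$ --- and only the statement that any element of $M^F((2+\rho\eta)^{4i+j})$ maps into $M^{\overline F}(2^{4i+j})$; the delicate direction is to exclude extra $\rho$-divisibility over $F$, that is, hidden $\rho$-Bockstein differentials into the relevant filtration of $H^{**}(\mathbb{P}^\infty_{-N})$, so that the detecting skeleton over $F$ agrees with the one over $\overline F$ (and, in characteristic $0$, over $\r$). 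Once this is checked in the finite range of bidegrees actually needed --- which coincides with the $\r$-motivic range already handled in \cite{Qui19b} --- the identification of $M^F((2+\rho\eta)^{4i+j})$ with the families of Theorem~\ref{Thm:FamiliesGeneral} follows by combining naturality of the lens-tower construction under base change with the nontriviality and $v_1$-periodicity of those families established there.
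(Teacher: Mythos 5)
Your route is not the paper's, and it has a genuine gap in exactly the direction you flag as delicate. Base change along $g : F \to \overline{F}$ preserves trivial maps and reflects nontrivial ones, so it only gives the upper bound $|M^F((2+\rho\eta)^{4i+j})| \leq |M^{\overline{F}}(2^{4i+j})|$ (together with the fact that, \emph{if} the degrees agree, the answer base-changes correctly). What is missing is the lower bound: the composite $S^{s,t} \to \Sigma^{1,0}\uL^\infty_{-N}$ could be nonzero over $F$ at a stage where it vanishes over $\overline{F}$, detected by a class killed by base change (a $\rho$-torsion class, or more generally a multiple of a Milnor $K$-theory class), which would make $M^F$ occur earlier and invalidate the statement; the paper's own Example ($16\sigma$ versus $8\sigma$ over $\r$, and the non-survival of $Ph_1$) shows this is a real phenomenon, not a formality. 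The paper closes this gap by a sandwich through a prime field: every $F$ with $\chara(F) \neq 2$ receives $\f_q \to F$ or $\q \to F$, triviality of the compressions passes forward along base change, and the prime-field Mahowald invariants are computed in Section \ref{Section:PrimeMI} (Theorem \ref{Thm:InitialMotMI2i} and its rational analogue) --- which is where the hard input lives, namely Propositions \ref{Prop:16NullFinite}, \ref{Prop:16NullpAdic}, \ref{Prop:16NullRationals}, low-dimensional computations in the style of \cite{MR93}, and the motivic Hasse principle over $\q$. Combining the two applications of Lemma \ref{Lem:Squeeze} with the lifts provided by Theorem \ref{Thm:FamiliesGeneral} and the identification of the $\overline{F}$-answer via Theorem \ref{Thm:WO} forces equality of degrees and membership. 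Your proposal never invokes the prime-field computations or any analogue of the ``$16$ is null on $L_m$'' statements, so the equality of the two bounds is unproven.

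The substitute you offer --- a $\rho$-Bockstein analysis over $F$ ``modeled on the $\r$-motivic differentials'' and checked ``in the range already handled over $\r$'' --- does not work for a general base field. The coefficient ring $\m_2^F$ involves all of $K^M_*(F)/2$, not just $\f_2[\rho]$, so the $E_1$-page carries many additional classes (arbitrary Milnor $K$-theory multiples) in precisely the bidegrees at issue; over $\f_q$ and $\q_\nu$ the paper must kill such classes ($uh_0^j$, $\pi u \tau h_3h_1^2$, etc.) by explicit Atiyah--Hirzebruch differentials, and there is no uniform-in-$F$ version of that bookkeeping. Moreover, in characteristic zero there is in general no field map relating $F$ and $\r$ (e.g.\ $F = \q(i)$), so ``the $\rho$-Bockstein behaves as over $\r$'' has no naturality behind it; the paper instead uses $\q \to F$ together with the Hasse principle over $\q$. (A small correction: in odd characteristic $\rho^2 = 0$ in $K^M_*(F)/2$ because $-1$ is a sum of two squares in the prime field, not only for $F$ finite --- but degeneration of the $\rho$-Bockstein in $\rho$ alone does not dispose of the other Milnor $K$-theory contributions.) To make your argument go through you essentially need to reinstate the prime-field computations and the squeeze lemma, which is the paper's proof.
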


\subsection{Outline}
In Section \ref{Section:Prelim}, we recall results from forthcoming work of Gepner-Heller \cite{GH18} and Gregersen-Heller-Kylling-Rognes-{\O}stv{\ae}r \cite{GHKRO18}. In particular, we discuss equivariant motivic homotopy theory, a motivic analog of Lin's Theorem, and the compatibility of both of these with base-change. We then extend the definition of the motivic Mahowald invariant to base fields $F$ of characteristic not two. We also prove the main technical lemma which is used in Section \ref{Section:General} to infer $F$-motivic Mahowald invariants from our computations over $\Spec(\f_q)$ and $\Spec(\q)$ (from Section \ref{Section:Prime}), and $\Spec(\c)$ (from \cite{Qui19a}). 

In Section \ref{Section:Prime}, we discuss $v_1$-periodicity over algebraically closed fields. We then define $v_1$-periodic families over $\Spec(\f_q)$ (where $q$ is an odd prime), $\Spec(\q_\nu)$ (where $\nu$ is any prime), and $\Spec(\q)$. Our primary tool is the $\rho$-Bockstein spectral sequence introduced by Hill in \cite{Hil11}. We also prove that the map $(2 + \rho \eta)^4$ is null on certain stunted motivic lens spectra using the Atiyah-Hirzebruch spectral sequence and base-change.

In Section \ref{Section:General}, we use our computations from Section \ref{Section:FiniteFields} to compute $M^{\f_q}(2^i)$, $i \geq 1$, and we use our computations from Section \ref{Section:Rationals} to compute $M^\q((2+\rho \eta)^i)$ for $i \equiv 2,3\mod 4$. In both cases, we follow the proof from \cite[Sec. 5]{Qui19b} which is modified from the proof of \cite[Thm. 2.17]{MR93}. We then use the key comparison lemma from Section \ref{Section:Prelim} to compute the motivic Mahowald invariants of $(2+\rho\eta)^i$ over any field $F$ of characteristic not two. The key point is $F$ fits into a sequence of field extensions
\begin{align*}
\f_q \to F \to \overline{F}, \quad &  \text{ if } \chara(F)=q \neq 0, \\
\q \to F \to \overline{F}, \quad & \text{ if } \chara(F) =0.
\end{align*}
The motivic Mahowald invariants of $(2+\rho \eta)^i$ agree over $\Spec(\f_q)$, $\Spec(\q)$, and $\Spec(\overline{F})$ in the congruence classes of $i$ where we can calculate them, so by compatibility with base-change, they must agree over $\Spec(F)$ as well. 

\subsection{Notation}
We employ the following notation and conventions throughout:
\begin{enumerate}
\item $k$, $F$ , and $L$ are fields of characteristic not two. 
\item $\f_q$ is the finite field with $q$ elements, where $q$ is an odd prime.
\item $SH(k)$ is the motivic stable homotopy category over $\Spec(k)$.
\item $S^{0,0}$ is the motivic sphere spectrum.
\item Everything is implicitly $(2,\eta)$-complete. 
\item $\m_2^k$ is the mod two motivic cohomology of a point over $\Spec(k)$.
\item $A^k$ (resp. $A^k_*$) is the motivic (resp. dual motivic) Steenrod algebra over $\Spec(k)$.
\item $Ext^{s,f,w}_{A^k}$ denotes the cohomology of the $k$-motivic Steenrod algebra in stem $s$, Adams filtration $f$, and motivic weight $w$. 
\end{enumerate}

\subsection{Acknowledgements}
The author thanks Jonas Irgens Kylling first and foremost. Many of the ideas and insights in this paper were discovered in collaboration with him during the author's visit to the University of Oslo in August 2018 and the subsequent months. The author also thanks Tom Bachmann, Mark Behrens, Jeremiah Heller, Dan Isaksen, Paul Arne {\O}stv{\ae}r, and an anonymous referee for helpful discussions. This project was also partially completed at the Newton Institute workshop on equivariant and motivic homotopy theory in 2018. We gratefully thank the Newton Institute and the University of Oslo for their support. The author was partially supported by NSF grant DMS-1547292.

\section{Motivic Lin's Theorem and the motivic Mahowald invariant revisited}\label{Section:Prelim}
In this section, let $f : k \to F$ be a field extension and let $f^* : SH(k) \to SH(F)$ be the corresponding base-change functor. 

\subsection{Motivic Lin's Theorem revisited}\label{Section:MotTate}

We begin by defining geometric universal spaces and geometric classifying spaces following \cite{GH18}. Let $\cp(Sm_k^G)$ be the category of motivic $G$-spaces over $\Spec(k)$ \cite{GH18}.

\begin{defin}[{\cite[Def. 3.2]{GH18}}]
Let $\cf$ be a family of subgroups of $G$. The \emph{universal motivic $\cf$-space over $\Spec(k)$} is the object $E_{gm}\cf_k \in \cp(Sm_k^G)$ whose value on $X \in Sm^G_k$ is
$$E_{gm}\cf_k(X) = \begin{cases}
\emptyset \quad &\text{ if } X^H \neq \emptyset \text{ for some } H \notin \cf, \\
pt \quad&\text{ else.}
\end{cases}
$$

When $\cf$ is the family of proper subgroups of $G$, we will use the notation $E_{gm}G := E_{gm}\cf$. In this case, we define
$$B_{gm}G := (E_{gm}G) / G.$$
In this case, $B_{gm}G$ is the geometric classifying space originally defined by Morel-Voevodsky \cite{MV99} and Totaro \cite{Tot99}. 
\end{defin}

Geometric universal spaces are preserved under base-change by \cite[Prop. 3.8]{GH18} and satisfy motivic analogs of many useful properties of universal spaces in classical homotopy theory. 

\begin{defin}[{\cite{Gre12}}]\label{Def:Proj}
For all $n \in \z$, the \emph{stunted motivic lens spectrum} $\underline{L}^\infty_n$ is defined by setting
$$\underline{L}^\infty_n := Th(n\gamma \to B_{gm}C_2)$$
where $\gamma$ is the tautological line bundle over $B_{gm}C_2$. Define
$$\underline{L}^\infty_{-\infty} := \underset{n}{\holim} \underline{L}^\infty_{-n}.$$
\end{defin}

The key property of this construction is the following, which was proven by Gregersen over fields of characteristic zero with finite virtual cohomological dimension, and by Gregersen-Heller-Kylling-Rognes-{\O}stv{\ae}r over fields of characteristic not two. 

\begin{thm}[{\cite[Thm. 2.0.2]{Gre12}\cite{GHKRO18}}]\label{Thm:MotLinsThm}
The map
$$S^{-1,0} \to \underline{L}^\infty_{-\infty}$$
induces a $\pi_{**}$-isomorphism after $(2,\eta)$-completion. 
\end{thm}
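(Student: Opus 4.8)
The plan is to reduce the statement to its classical and $\mathbb{C}$-motivic analogues via base-change, exploiting the fact that $\underline{L}^\infty_{-\infty}$ is built from the geometric classifying space $B_{gm}C_2$, which (by the cited results of Gepner--Heller) is compatible with the base-change functor $f^* : SH(k) \to SH(F)$. The first move is to establish the statement over a ``ground" field where it is already known — namely over $\mathbb{R}$ or over $\mathbb{Q}$, where Gregersen's original Theorem~\ref{Thm:MotLinsThm} applies since these fields have finite virtual cohomological dimension, or over $\mathbb{C}$ where it follows from the classical Lin's Theorem together with Betti realization and the known structure of the $\mathbb{C}$-motivic Steenrod algebra. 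Actually, since the statement as quoted is attributed jointly to \cite{Gre12} and \cite{GHKRO18}, I would organize the proof as: (i) recall Gregersen's argument in the case of finite vcd, and (ii) remove the vcd hypothesis using base-change along $k_0 \to k$ where $k_0$ is the prime field (which has finite vcd) together with a continuity/colimit argument, since an arbitrary field of characteristic not two is a filtered colimit of finitely generated, hence finite-vcd, extensions of its prime field.

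The technical heart is the identification of the homotopy type, or at least the homotopy groups, of $\underline{L}^\infty_{-\infty} = \holim_n \underline{L}^\infty_{-n}$. The key steps, in order: first, compute the mod-two motivic cohomology of the stunted lens spectra $\underline{L}^\infty_n$ as modules over the motivic Steenrod algebra $A^k$, using the Thom isomorphism for the (twisted) bundle $n\gamma \to B_{gm}C_2$ and the known computation of $\m_2^k$-cohomology of $B_{gm}C_2$ (a truncated polynomial-type algebra on classes in bidegrees $(1,1)$ and $(2,1)$, the $\rho$-twisted motivic analogue of $H^*(B\mathbb{Z}/2;\mathbb{F}_2)$). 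Second, pass to the limit: the transition maps $\underline{L}^\infty_{-n-1} \to \underline{L}^\infty_{-n}$ are understood on cohomology, and one checks the relevant $\lim^1$ terms vanish (after $(2,\eta)$-completion) so that $H^{**}(\underline{L}^\infty_{-\infty})$ is the completed inverse limit — this is the motivic Singer construction, and the claim is that it is $A^k$-free on a single generator in bidegree $(-1,0)$, matching $H^{**}(S^{-1,0})$. Third, feed this into the motivic Adams spectral sequence: the map $S^{-1,0} \to \underline{L}^\infty_{-\infty}$ is an $H\underline{\mathbb{F}}_2$-homology isomorphism, hence an isomorphism on motivic Adams $E_2$-pages, hence a $\pi_{**}$-isomorphism after $(2,\eta)$-completion (recalling the standing convention that everything is implicitly $(2,\eta)$-complete, and that the motivic Adams spectral sequence converges in this completed setting).

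The main obstacle I expect is controlling the inverse limit and the associated $\lim^1$ term in the motivic setting without a vcd hypothesis: classically, Lin's Theorem rests on delicate finiteness, but motivically the bigraded homotopy groups can be infinitely generated in each Milnor--Witt degree, and one must know that $(2,\eta)$-completion tames this. The cleanest route around this is precisely the base-change strategy: rather than redoing the limit argument over an arbitrary $F$, invoke \cite[Prop. 3.8]{GH18} (geometric universal spaces are preserved by $f^*$) to reduce to the prime field, where \cite{Gre12} or a finitary argument applies, and then use that $SH(-)$ is continuous in the field variable along filtered colimits of fields — so that both the source $S^{-1,0}$ and the target $\underline{L}^\infty_{-\infty}$, along with the comparison map, are pulled back from some finitely generated subextension. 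One should be slightly careful that the homotopy limit defining $\underline{L}^\infty_{-\infty}$ commutes with these base-change functors after completion; this is where I would spend the most care, using that each $\underline{L}^\infty_{-n}$ is a cellular (even finite-type) spectrum so that base-change is exact on it and the tower has the requisite Mittag-Leffler property after $(2,\eta)$-completion.
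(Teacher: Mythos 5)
First, a point of comparison: the paper does not prove this statement at all --- it is imported verbatim from \cite[Thm. 2.0.2]{Gre12} and \cite{GHKRO18} --- so there is no internal argument to measure you against; your sketch has to stand on its own, and as written it does not. The fatal step is at what you call the technical heart: you assert that the continuous cohomology $\lim_n H^{**}(\underline{L}^\infty_{-n})$ is $A^k$-free on a single generator in bidegree $(-1,0)$, so that $S^{-1,0} \to \underline{L}^\infty_{-\infty}$ is an $H\mft$-cohomology isomorphism and hence an Adams $E_2$-isomorphism. This is false, already classically. The inverse limit of the cohomologies of the stunted lens spectra is the (motivic) Singer construction --- by the Thom isomorphism it is roughly $\m_2^k[u,v^{\pm 1}]/(u^2 = \tau v + \rho u)$ with $|u|=(1,1)$, $|v|=(2,1)$ --- an enormous $A^k$-module, and the comparison map from $H^{**}(S^{-1,0})$ is nowhere near an isomorphism. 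If it were, Lin's theorem would be a triviality; the entire content of the theorem is that this non-isomorphism of $A^k$-modules nevertheless induces an isomorphism of $\mathrm{Ext}_{A^k}$-groups (the motivic analogue of the Lin / Adams--Gunawardena--Miller theorem on the algebraic Singer construction), which one then feeds, together with a continuity statement identifying $\mathrm{Ext}$ of the limit with the colimit of $\mathrm{Ext}$'s, into an inverse limit of $(2,\eta)$-complete motivic Adams spectral sequences. Your outline omits exactly this Ext-isomorphism input, so the spectral sequence step collapses.

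The reduction strategy in your first and last paragraphs is also shakier than you allow. The base-change functor $f^*$ is a left adjoint, so it has no a priori compatibility with $\holim_n \underline{L}^\infty_{-n}$ or with $(2,\eta)$-completion; Lemma \ref{Lem:ProjBaseChange} only asserts base-change invariance of the individual $\underline{L}^\infty_n$, not of the limit. Even granting that, a $\pi_{**}$-isomorphism over the prime field does not formally descend or ascend to one over $F$, since completed homotopy groups over $F$ are not computed from those over the prime field by a filtered-colimit argument alone; making this precise is essentially the technical work of \cite{GHKRO18}, not a routine ``Mittag-Leffler'' remark. Likewise, the theorem over $\c$ does not follow from classical Lin's theorem plus Betti realization, because Betti realization is neither conservative nor a $\pi_{**}$-isomorphism; the known proofs go through the motivic Singer construction over $\c$ directly. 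In short, the proposal identifies the right objects but replaces the two genuinely hard inputs (the motivic Ext-isomorphism theorem and the limit/completion bookkeeping) with claims that are either false or unproved.
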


We will use this in the next section to define the $k$-motivic Mahowald invariant. We conclude this section by recording a useful property of  stunted motivic lens spectra. 

\begin{lem}[{\cite{GHKRO18}}]\label{Lem:ProjBaseChange}
Let $n \in \z$. Then $\underline{L}^\infty_n$ is preserved under base-change. 
\end{lem}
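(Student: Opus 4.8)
The plan is to reduce the statement to the two ingredients already in hand: the definition of $\underline{L}^\infty_n$ as a Thom spectrum over the geometric classifying space $B_{gm}C_2$, and the fact (cited from \cite{GH18}) that geometric universal — hence geometric classifying — spaces are preserved under base-change. First I would unwind what ``preserved under base-change'' should mean here: writing $f^* : SH(k) \to SH(F)$ for the base-change functor, the claim is that the natural map $f^*\underline{L}^\infty_n \to \underline{L}^\infty_n$ (the latter formed over $\Spec(F)$) is an equivalence in $SH(F)$ after $(2,\eta)$-completion, compatibly as $n$ varies.

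The key steps, in order, are as follows. First, recall that $f^*$ is a symmetric monoidal left adjoint, so it commutes with suspension spectra, smash products, and in particular with the formation of Thom spectra of (pullbacks of) vector bundles: $f^*\,Th(\xi \to X) \simeq Th(f^*\xi \to f^*X)$. Second, invoke \cite[Prop. 3.8]{GH18}: $f^* E_{gm}C_2 \simeq E_{gm}C_2$ over $\Spec(F)$, and since $f^*$ commutes with the quotient by the free $C_2$-action (colimits are preserved), $f^* B_{gm}C_2 \simeq B_{gm}C_2$. Third, observe that the tautological line bundle $\gamma$ is natural in the base: its pullback along $f$ is the tautological line bundle over $B_{gm}C_2$ formed over $\Spec(F)$, so $f^*(n\gamma) \simeq n\gamma$ there. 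Combining these, $f^*\underline{L}^\infty_n = f^*\,Th(n\gamma \to B_{gm}C_2) \simeq Th(n\gamma \to B_{gm}C_2) = \underline{L}^\infty_n$, which is the assertion for each fixed $n \in \z$.

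I expect the main obstacle to be bookkeeping rather than conceptual: one must be careful that $B_{gm}C_2$ is not literally a smooth scheme but a sequential (homotopy) colimit of Thom-space-like pieces built from representations of $C_2$ (the Morel–Voevodsky/Totaro model), so ``$Th(n\gamma \to B_{gm}C_2)$'' is itself a filtered colimit, and the base-change compatibility must be checked to be compatible with that filtration. Since $f^*$ preserves filtered homotopy colimits, this is fine, but it is the point that requires the input of \cite{GHKRO18} rather than a one-line formal argument — in particular, checking that the comparison is well-behaved with respect to the $(2,\eta)$-completion implicit throughout (as recorded in the Notation section) and, if one wants the pro-system $\{\underline{L}^\infty_{-n}\}_n$ and hence $\underline{L}^\infty_{-\infty}$ to be preserved, that $f^*$ interacts correctly with the homotopy limit in Definition \ref{Def:Proj}. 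For the statement as given (fixed $n \in \z$), only the colimit side is needed, and the proof is the string of natural equivalences above; the $\holim$ compatibility can be deferred to where it is actually used.
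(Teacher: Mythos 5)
The paper does not actually prove this lemma: it is quoted from the forthcoming work \cite{GHKRO18}, so there is no internal argument to compare against. Your sketch is the natural one and is, in outline, what the cited sources do: $f^*$ is a symmetric monoidal left adjoint, so it preserves the filtered colimits out of which the Morel--Voevodsky/Totaro model of $B_{gm}C_2$ is built, it preserves quotients by the free $C_2$-action, the tautological bundle $\gamma$ is natural in the base, and Thom spectra of honest vector bundles commute with $f^*$. That chain of equivalences is correct as far as it goes.

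Two points deserve more care than your write-up gives them. First, the lemma is stated for all $n \in \z$, and for $n<0$ the object $Th(n\gamma \to B_{gm}C_2)$ is a Thom spectrum of a \emph{virtual} bundle; your step ``$f^*$ commutes with Thom spectra of (pullbacks of) vector bundles'' does not literally apply. One has to unwind the definition over the finite-dimensional models $B_{gm}C_2^{(N)}$ (where $\gamma$ is an honest line bundle), define $Th(n\gamma)$ as a colimit of suitably desuspended Thom spectra $\Sigma^{-2N,-N}Th((n+N)\gamma|_{B_{gm}C_2^{(N)}})$ or via a twist by an invertible object, and then check base-change compatibility of that presentation; this works because $f^*$ preserves the colimits and the invertible suspensions involved, but it is exactly the bookkeeping that \cite{GHKRO18} supplies and should not be elided. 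Second, your remark about $(2,\eta)$-completion and about $\underline{L}^\infty_{-\infty}$ is the right caution: $f^*$ is a left adjoint, so it has no formal reason to commute with the homotopy limit in Definition \ref{Def:Proj} or with completions, and indeed the paper only ever asserts preservation of the individual $\underline{L}^\infty_n$ (Lemma \ref{Lem:Squeeze} is arranged to use only the truncations $\underline{L}^\infty_{-N}$). With the virtual-bundle step made explicit, your argument is a correct proof of the statement as given.
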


\subsection{The motivic Mahowald invariant revisited}\label{Section:MotMI}

Using the results of the previous section, we can extend the definition of the motivic Mahowald invariant from \cite[Sec. 2]{Qui19a} to general base fields. 

\begin{defin}~\label{Def:MotMI} 
Let $\alpha \in \pi^k_{s,t}(S^{0,0})$. We define the \emph{$k$-motivic Mahowald invariant} of $\alpha$, denoted $M^k(\alpha)$, as follows. Consider the coset of completions of the following diagram
\[
\begin{tikzcd}
S^{s,t} \arrow[dashed,rr] \arrow{d}{\alpha} && S^{-2N+1,-N} \vee S^{-2N+2,-N+1} \arrow{d} \\
S^{0,0} \arrow{r}{\simeq} & \Sigma^{1,0} \underline{L}^\infty_{-\infty} \arrow{r} & \Sigma^{1,0} \underline{L}^\infty_{-N}
\end{tikzcd}
\]
where $N>0$ is minimal so that the left-hand composition is nontrivial. If the composition of the dashed arrow with the projection onto the higher dimensional sphere is nontrivial, we define the $k$-motivic Mahowald invariant $M^k(\alpha)$ to be the coset of completions composed with the projection onto the higher dimensional sphere. Otherwise, the composition of the dashed arrow with the projection onto the higher dimensional sphere is trivial and we define the motivic Mahowald invariant $M^k(\alpha)$ to be the coset of completions composed with the projection onto the lower dimensional sphere. We illustrate this convention in the examples later in this section.
\end{defin}

Recall that the ``Squeeze Lemmas" from \cite[Sec. 3]{Qui19b} were used to compute generalized Mahowald invariants by comparing them under functors such as equivariant Betti realization and geometric fixed points. Since $\underline{L}^\infty_{-N}$ is preserved by base-change, we obtain the following comparison lemma which will be essential in the last section.

\begin{lem}\label{Lem:Squeeze}
Let $f : k \to F$. Suppose $\alpha,\beta \in \pi^{F}_{**}(S^{0,0})$ such that $\beta \in M^F(\alpha)$. Suppose further that there exist $\alpha', \beta' \in \pi^k_{**}(S^{0,0})$ such that $f^*(\alpha') = \alpha$ and $f^*(\beta') = \beta$. Then 
$$|M^k(\alpha')| \leq |\beta'|.$$
Further, if $|M^k(\alpha')| = |\beta'|$, then $\beta' \in M^k(\alpha')$. 
\end{lem}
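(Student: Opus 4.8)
The plan is to trace through the defining diagram of the Mahowald invariant and exploit the fact that every object in it is preserved under the base-change functor $f^*$. Write $N'$ for the integer appearing in the definition of $M^k(\alpha')$, i.e. $N' > 0$ is minimal so that the composite $S^{|\alpha'|} \xrightarrow{\alpha'} S^{0,0} \simeq \Sigma^{1,0}\uL^\infty_{-\infty} \to \Sigma^{1,0}\uL^\infty_{-N'}$ is nontrivial, and write $N$ for the corresponding integer in the definition of $M^F(\alpha)$. First I would observe that applying $f^*$ to the diagram computing $M^k(\alpha')$ yields (by Lemma \ref{Lem:ProjBaseChange}, since $f^*\Sigma^{1,0}\uL^\infty_{-n} \simeq \Sigma^{1,0}\uL^\infty_{-n}$ over $F$, and since $f^*$ is compatible with the equivalence of Theorem \ref{Thm:MotLinsThm}) the analogous diagram over $F$ with $\alpha'$ replaced by $f^*(\alpha') = \alpha$. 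Since a nontrivial composite remains nontrivial... well, that is exactly the subtlety: $f^*$ need not be faithful on homotopy groups. But we do know $f^*(\beta') = \beta \neq 0$ (as $\beta \in M^F(\alpha)$ and Mahowald invariants are cosets of nontrivial elements), so the relevant \emph{specific} composites we care about do remain nonzero.

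Next I would set up the comparison of the integers $N$ and $N'$. Since $\beta \in M^F(\alpha)$, the composite $S^{|\alpha|}\xrightarrow{\alpha} \Sigma^{1,0}\uL^\infty_{-\infty} \to \Sigma^{1,0}\uL^\infty_{-N}$ is nontrivial (it detects $\beta$ on one of the two top cells), hence by minimality of $N$, this $N$ is the Mahowald threshold for $\alpha$ over $F$. Applying $f^*$ to the truncated diagram over $k$ at level $N'$: the composite $S^{|\alpha'|}\xrightarrow{\alpha'}\Sigma^{1,0}\uL^\infty_{-\infty}\to\Sigma^{1,0}\uL^\infty_{-N'}$ is nontrivial over $k$ but its image under $f^*$ is the analogous composite over $F$ at level $N'$, which \emph{may} be trivial. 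However, the composite over $k$ at level $N'$ that represents $M^k(\alpha')$ has image, under $f^*$, equal to a composite over $F$ factoring $\alpha$ through $\uL^\infty_{-N'}$; the key numerical point is that the threshold can only drop or stay the same under base change in the direction needed, i.e. $N \geq N'$ — because any nontrivial factorization over $F$ at level $N$ does \emph{not} directly give one over $k$, so I actually need to argue via the dimension of $\beta'$.

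Here is the cleaner route, which I would actually write up. Over $k$, the Mahowald invariant $M^k(\alpha')$ lives on a wedge $S^{-2N'+1,-N'}\vee S^{-2N'+2,-N'}$, so $|M^k(\alpha')| \in \{(-2N',-N'+... )\}$ — its stem is either $|\alpha'| + 2N' - 1$ or $|\alpha'| + 2N' - 2$ depending on which cell it lands on (after the shift by $(1,0)$). Apply $f^*$: the class $M^k(\alpha')$ maps to a class in $\pi^F_{**}$ that provides a factorization of $\alpha = f^*(\alpha')$ through $\Sigma^{1,0}\uL^\infty_{-N'}$. Now compare with $\beta' \in \pi^k_{**}$: since $f^*(\beta') = \beta \in M^F(\alpha)$ and $\beta$ is detected on the top cell(s) of $\Sigma^{1,0}\uL^\infty_{-N}$ with $N$ minimal, the dimension of $\beta$ — equivalently of $\beta'$, since $f^*$ preserves bidegree — forces $N \leq N'$ is impossible unless the factorization of $\alpha$ at level $N'$ coming from $M^k(\alpha')$ is trivial on the top cell, which would mean $M^k(\alpha')$ lies on the lower cell, giving $|M^k(\alpha')|$ one stem lower. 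Assembling: the bidegree of any element of $M^k(\alpha')$ is at most that of $\beta'$, with equality only when the factorizations match on the nose on the top cell, in which case the coset $M^k(\alpha')$, pushed forward by $f^*$, lands in $M^F(\alpha)$ and contains $\beta$; since $f^*$ preserves bidegrees and $M^k(\alpha')$ is a coset in a single bidegree, $\beta'$ itself must lie in $M^k(\alpha')$.

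The main obstacle is precisely the non-injectivity of $f^*$ on homotopy groups: I cannot conclude that the $k$-level threshold $N'$ equals the $F$-level threshold $N$, only that $N' \geq N$ (a nontrivial factorization over $k$ stays nontrivial over $F$ after $f^*$ only for the \emph{specific} classes $\alpha', \beta'$ we have pinned down, not in general — but for the threshold we need the \emph{converse} implication). The resolution is to phrase everything in terms of the fixed lifts $\alpha', \beta'$ rather than abstract thresholds: $\beta'$ lies in some bidegree; $f^*\beta' = \beta$ realizes a bona fide factorization of $\alpha$ through a truncated lens spectrum at the level dictated by $|\beta'|$; minimality of $N'$ (the $k$-threshold) then gives $2N' - 1 \leq$ (the relevant offset determined by $|\beta'|$), i.e. $|M^k(\alpha')| \leq |\beta'|$; and in the equality case the pushforward of the $k$-coset is forced to be the $F$-coset containing $\beta$, so $\beta' \in M^k(\alpha')$ by bidegree considerations. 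I expect the bookkeeping with the $(1,0)$-suspension shift and the two-cell wedge (which cell one lands on) to be the fiddly part, but it is exactly parallel to the Squeeze Lemma arguments in \cite[Sec. 3]{Qui19b}.
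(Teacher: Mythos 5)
Your overall strategy is the paper's (base-change the defining diagrams and compare truncation levels), but there is a genuine gap at the decisive step, and it comes from conflating two different things. The level $N$ in Definition \ref{Def:MotMI} is defined by nonvanishing of the left-hand composite $S^{s,t}\xrightarrow{\alpha}S^{0,0}\simeq\Sigma^{1,0}\uL^\infty_{-\infty}\to\Sigma^{1,0}\uL^\infty_{-N}$, not by the existence of the dashed completion. By Lemma \ref{Lem:ProjBaseChange} (together with the compatibility of the equivalence of Theorem \ref{Thm:MotLinsThm} with base change), this composite over $F$ is literally $f^*$ applied to the corresponding composite over $k$, since $\alpha=f^*(\alpha')$. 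A functor sends zero maps to zero maps, so if the $k$-composite at level $N$ were trivial, its base change would be trivial, contradicting $\beta\in M^F(\alpha)$. Hence the $k$-threshold $N'$ satisfies $N'\leq N$, and the degree bookkeeping gives $|M^k(\alpha')|\leq|\beta'|$. No faithfulness of $f^*$ is needed, and nothing has to be ``descended'' from $F$ to $k$: your worry about faithfulness concerns the opposite direction (pushing a nontrivial $k$-composite forward to $F$), which the lemma never requires, and your assertion that ``any nontrivial factorization over $F$ at level $N$ does not directly give one over $k$'' rejects exactly the correct and easy step above because you are thinking of lifts through the wedge of top cells rather than of the composite that defines the threshold.

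Because of this, your substitute ``cleaner route'' does not close. Minimality of $N'$ only says that the $k$-composites at levels below $N'$ vanish; to compare $N'$ with the level dictated by $|\beta'|$ you must know that the $k$-composite at level $N$ is nonzero, and the fact that $f^*(\beta')=\beta$ gives a nontrivial factorization over $F$ says nothing about the $k$-composite without the base-change observation you disclaimed. Once that observation is in place, the argument collapses to the paper's one-line proof, and the equality case goes as in the paper: if $|\beta'|=|M^k(\alpha')|$ then $N$ is also the minimal level over $k$ and $\beta'\in M^k(\alpha')$ by definition, which is roughly what your closing bidegree remark is after. So the missing ingredient is a single sentence --- the $F$-diagram is obtained from the $k$-diagram by applying $f^*$, so a nontrivial composite over $F$ cannot arise from a trivial one over $k$ --- and without it the inequality is not established.
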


\begin{proof}
Suppose that the $F$-motivic Mahowald invariant $M^F(\alpha)$ is defined by the commutative diagram
\[
\begin{tikzcd}
S^{s,t}  \arrow{d}{\alpha} \arrow[rr,dashed,"\beta"] && S^{-2N+1,-N} \wedge S^{-2N+2,-N+1} \arrow{d} \\
S^{0,0} \arrow{r}{\simeq} & \Sigma^{1,0} \underline{L}^\infty_{-\infty} \arrow{r} & \Sigma^{1,0} \underline{L}^\infty_{-N},
\end{tikzcd}
\]
so in particular $N>0$ is minimal so that the left-hand composite is nontrivial. Then the left-hand composite in the commutative diagram
\[
\begin{tikzcd}
S^{s,t}  \arrow{d}{\alpha'} \arrow[rr,dashed,"\beta'"] && S^{-2N+1,-N} \wedge S^{-2N+2,-N+1} \arrow{d} \\
S^{0,0} \arrow{r}{\simeq} &  \Sigma^{1,0} \underline{L}^\infty_{-\infty} \arrow{r} & \Sigma^{1,0} \underline{L}^\infty_{-N}
\end{tikzcd}
\]
must also be nontrivial since the first commutative diagram can be obtained from this one by applying $f^*$. Since there may be some $N' < N$ such that the left-hand composite is nontrivial, we only obtain an inequality as in the statement of the lemma. However, if $|\beta'| = |M^k(\alpha')|$, then $N$ is minimal and $\beta' \in M^k(\alpha')$ by definition, which proves the last claim. 
\end{proof}

Applying the previous lemma to the a composite of base-change functors gives the following.

\begin{cor}
Let $k \overset{f}{\to} F \overset{g}{\to} L$ be a sequence of field extensions. Suppose that we have $\alpha,\beta \in \pi^k_{**}(S^{0,0})$, $\alpha', \beta' \in \pi^F_{**}(S^{0,0})$, and $\alpha'', \beta'' \in \pi^L_{**}(S^{0,0})$ such that $g^*(\alpha') = \alpha''$, $g^*(\beta') = \beta''$, $f^*(\alpha) = \alpha'$,  $f^*(\beta) = \beta'$, $\beta \in M^k(\alpha)$, and $\beta'' \in M^L(\alpha'')$. Then $\beta' \in M^F(\alpha')$. 
\end{cor}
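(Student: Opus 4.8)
The plan is to deduce the corollary from a single application of Lemma~\ref{Lem:Squeeze} together with a bookkeeping argument controlling the minimal integer $N$ of Definition~\ref{Def:MotMI}, using the base-change compatibility of the stunted motivic lens spectra (Lemma~\ref{Lem:ProjBaseChange}) and of motivic Lin's theorem (Theorem~\ref{Thm:MotLinsThm}). First I would apply Lemma~\ref{Lem:Squeeze} to the base-change functor $g^\ast : SH(F) \to SH(L)$: since $\beta'' \in M^L(\alpha'')$ and $g^\ast(\alpha') = \alpha''$, $g^\ast(\beta') = \beta''$, this yields $|M^F(\alpha')| \leq |\beta'|$, with equality forcing $\beta' \in M^F(\alpha')$. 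So the entire problem reduces to proving $|M^F(\alpha')| = |\beta'|$; equivalently, that the minimal $N$ defining $M^F(\alpha')$ equals the level at which $\beta'$ sits, and that $M^F(\alpha')$ projects onto the same bottom cell of $\Sigma^{1,0}\underline{L}^\infty_{-N}$ that $\beta'$ lives on.

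For the level, write $N_k$ for the minimal integer witnessing $M^k(\alpha)$ and $N_F$ for the one witnessing $M^F(\alpha')$. By Lemma~\ref{Lem:ProjBaseChange} and the base-change compatibility of motivic Lin's theorem recorded in Section~\ref{Section:MotTate}, the whole defining diagram over $\Spec(F)$ at level $N$ is obtained by applying $f^\ast$ to the corresponding diagram over $\Spec(k)$, so the left-hand composite over $\Spec(F)$ at level $N$ is $f^\ast$ of the one over $\Spec(k)$. As $f^\ast$ preserves zero maps, nontriviality over $\Spec(F)$ at level $N$ implies nontriviality over $\Spec(k)$ at level $N$; hence $N_F \geq N_k$. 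On the other hand, $\beta \in M^k(\alpha)$ means $\beta$, and therefore $\beta' = f^\ast(\beta)$, lives on a bottom cell $S^{-2N_k+1,-N_k}$ or $S^{-2N_k+2,-N_k+1}$ of $\Sigma^{1,0}\underline{L}^\infty_{-N_k}$. Since each such bidegree occurs for exactly one value of $N$, the level of $\beta'$ is $N_k$; combined with $N_F \geq N_k$ and the inequality $|M^F(\alpha')| \leq |\beta'|$ from the previous paragraph, this forces $N_F = N_k$.

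It then remains to match the bottom cell. The two bottom cells of $\Sigma^{1,0}\underline{L}^\infty_{-N}$ and the maps connecting the various $\underline{L}^\infty_{-N}$ are pulled back from $\Spec(k)$, so the lift of the level-$N_k$ composite to the two-cell subcomplex over $\Spec(F)$ is $f^\ast$ of the lift over $\Spec(k)$, and the projections onto the two summands commute with $f^\ast$. Writing $\beta = p \circ \widetilde{\Phi}$ for a completion $\widetilde{\Phi}$ over $\Spec(k)$ and the projection $p$ selected by Definition~\ref{Def:MotMI}, we get $\beta' = f^\ast(\beta) = p \circ f^\ast(\widetilde{\Phi})$, exhibiting $\beta'$ as the same projection of a completion over $\Spec(F)$; the hypothesis $\beta'' \in M^L(\alpha'')$ with $|\beta''| = |\beta'|$ forces this same cell to be the one selected over $\Spec(L)$, and hence over $\Spec(F)$. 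Then $|M^F(\alpha')| = |\beta'|$, and the equality clause of Lemma~\ref{Lem:Squeeze} gives $\beta' \in M^F(\alpha')$.

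The main obstacle is precisely this last step: a priori the ``project onto the top cell'' alternative in Definition~\ref{Def:MotMI} might be resolved differently over $\Spec(F)$ than over $\Spec(k)$, since base change could in principle introduce a completion with nontrivial top-cell projection, which would place $M^F(\alpha')$ in a different bidegree from $\beta'$ and break the argument. Controlling this is exactly what the base-change compatibility of the stunted motivic lens spectra and of motivic Lin's theorem buys: the tower $\{\underline{L}^\infty_{-N}\}_N$ together with all its attaching data and the identification $S^{-1,0} \simeq \underline{L}^\infty_{-\infty}$ are defined over $\Spec(k)$ and preserved by $f^\ast$ and $g^\ast$, so the dichotomy of Definition~\ref{Def:MotMI} is answered consistently over $\Spec(k)$, $\Spec(F)$, and $\Spec(L)$; granting this input, the rest of the argument is routine.
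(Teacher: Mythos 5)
Your argument follows essentially the same route as the paper's: the paper's proof is literally a one-sentence appeal to Lemma \ref{Lem:Squeeze} applied across the two base-change functors, and your write-up is the natural expansion of it --- the upper bound $|M^F(\alpha')|\leq|\beta'|$ from the lemma applied to $g$, the lower bound $N_F\geq N_k$ by rerunning the nontriviality-descends argument from the lemma's proof along $f$ (which, as you correctly note, uses only $f^*(\alpha)=\alpha'$ and the base-change invariance of the tower from Lemma \ref{Lem:ProjBaseChange}), and then the equality clause of Lemma \ref{Lem:Squeeze} for $g$. That bookkeeping, including the observation that the bidegree of $\beta'$ determines both the level $N_k$ and the cell, is correct and is exactly what the paper leaves implicit.

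The step I would push back on is your justification of the top-cell/bottom-cell dichotomy in the last paragraph. Base-change invariance of $\{\underline{L}^\infty_{-N}\}$ and of the equivalence $S^{-1,0}\simeq\underline{L}^\infty_{-\infty}$ does not by itself imply that the dichotomy in Definition \ref{Def:MotMI} is resolved consistently over $k$, $F$, and $L$: that dichotomy is a vanishing statement about homotopy classes, and vanishing is neither preserved nor reflected by base change in the way you need (this is precisely why the minimal $N$ can jump between fields, which is the raison d'\^etre of Lemma \ref{Lem:Squeeze}). Note that when $\beta''$ (hence $\beta'$) lies on the higher-dimensional sphere, your own inequalities already settle the matter: $N_F=N_k$ together with $|M^F(\alpha')|\leq|\beta'|$ forces $|M^F(\alpha')|=|\beta'|$ with no cell-matching argument at all. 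When $\beta''$ lies on the lower-dimensional sphere, one must rule out a completion over $F$ whose top-cell projection is nonzero but dies under $g^*$; your appeal to base-change compatibility does not do this, and in fairness neither does the paper's one-line proof (nor the statement of Lemma \ref{Lem:Squeeze}, which elides the same point), so you are at the paper's level of rigor here --- but you should not present that consistency as a consequence of base-change invariance of the tower alone.
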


\section{$v_1$-periodic families over prime fields}\label{Section:Prime}

Over $\Spec(\c)$, one can construct $v_1$-periodic families via iterated Toda brackets following \cite{Qui19a}. We begin this section by constructing analogous families over any algebraically closed field using a result of Wilson-{\O}stv{\ae}r \cite{WO17}. We then construct lifts of these infinite families in $\pi_{**}^F(S^{0,0})$ where $F = \f_q$ ($q$ an odd prime), $F = \q_\nu$ ($\nu$ any prime), and $F = \q$. That is, we construct families which base-change to the $v_1$-periodic families in the algebraic closure. We also study the $F$-motivic homotopy of certain stunted motivic lens spectra. 

\subsection{$v_1$-periodicity over algebraically closed fields}\label{Section:Closed}

In this section, we define some infinite families over the algebraically closed fields. We begin by recalling the analogous families from the classical and $\c$-motivic settings.

In the classical setting, Adams constructed infinite families $\{v^{4i}_1 \eta^j, v^{4i}_1 8\sigma: 1 \leq j \leq 3, i \geq 0\}$ as the (nontrivial) composites
\begin{align*}
S^{j} \overset{\widetilde{\eta^j}}{\longrightarrow} S^{0}/2 \overset{v^{4i}_1}{\longrightarrow} \Sigma^{-8i} S^{0}/2 \to S^{-8i+1}, \\
S^{0} \hookrightarrow S^{0}/2 \overset{v^{4i}_1}{\longrightarrow} \Sigma^{-8i} S^{0}/2 \to S^{-8i+1},
\end{align*}
where $S^{0}/2$ is the mod two Moore spectrum, $\widetilde{\eta^j} : S^j \to S^0/2$ is a lift of $\eta^j \in \pi_j(S^0)$ to the top cell of $S^0/2$, and $v^{4}_1 : S^{0,0}/2 \to \Sigma^{-8} S^{0,0}/2$ is a non-nilpotent self-map of $S^0/2$ \cite{Ada66}. These classes are detected by the classes $\{P^i h_1^j, P^i h_0^3 h_3: 1 \leq j \leq 3, i \geq 0\}$ in the Adams spectral sequence, where $P^i(-)$ is the Massey product $P(-) := \langle h_3, h_0^4, - \rangle$ iterated $i$-times. 

We discussed $\c$-motivic lifts of these infinite families in \cite{Qui19a}. In particular, one can define $P(-) := \langle h_3, h_0^4, - \rangle$ in the $\c$-motivic Adams spectral sequence and define infinite families $\{v^{4i}_1 \eta, v^{4i}_1 \tau \eta^2, v^{4i}_1 \tau \eta^3, v^{4i}_1 8\sigma : i \geq 0\}$ as the classes detected by $\{P^i h_1, P^i \tau h_1^2, P^i \tau h_1^3, P^i h_0^3 h_3: i \geq 0\}$. All of these classes are nontrivial since they are permanent cycles (for degree reasons) and their Betti realizations are Adams' classical families. 

We can construct analogs of these classes over arbitrary algebraically closed fields of characteristic not two using the following theorem of Wilson and Wilson-{\O}stv{\ae}r:

\begin{thm}\label{Thm:WO}
Let $\overline{F}$ be an algebraically closed field of exponential characteristic $q \neq 2$. For all $s \geq w \geq 0$, there are isomorphisms $\pi^{\overline{F}}_{s,w}(S^{0,0})[q^{-1}] \cong \pi^\c_{s,w}(S^{0,0})[q^{-1}]$. 
\end{thm}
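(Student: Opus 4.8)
The plan is to deduce Theorem~\ref{Thm:WO} from existing comparison results in motivic stable homotopy theory, reducing the case of a general algebraically closed field $\overline{F}$ of exponential characteristic $q$ to the already-understood case of $\c$ (or, in positive characteristic, of $\overline{\f_q}$). First I would recall that by work of Wilson--{\O}stv{\ae}r \cite{WO17}, after inverting the exponential characteristic, the mod two motivic Steenrod algebra and the motivic cohomology of a point over $\overline{F}$ agree with those over $\c$: one has $\m_2^{\overline{F}}[q^{-1}] \cong \m_2^{\c}$ as modules over the Steenrod algebra, and $A^{\overline{F}}_*[q^{-1}] \cong A^{\c}_*$. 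Consequently the $E_2$-pages of the motivic Adams spectral sequences computing $\pi_{**}^{\overline{F}}(S^{0,0})$ (after inverting $q$ and $(2,\eta)$-completing, per our standing conventions) and $\pi_{**}^{\c}(S^{0,0})$ are isomorphic, $Ext_{A^{\overline{F}}}^{s,f,w}[q^{-1}] \cong Ext_{A^{\c}}^{s,f,w}$.

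Next I would argue that this isomorphism of $E_2$-pages is compatible with differentials and hence yields an isomorphism of the associated graded of the abutment in the stated range. In characteristic zero, base-change along $\c \to \overline{F}$ (or the span relating $\overline{F}$ to $\c$ through a common subfield, using that both are algebraically closed of characteristic zero and Tarski--Lefschetz-type rigidity for the relevant homotopy sheaves) induces a map of Adams spectral sequences which is an isomorphism on $E_2$ after $(2,\eta)$-completion, so by comparison of spectral sequences it is an isomorphism on $E_\infty$, and the stated range $s \geq w \geq 0$ or $s < w$ is exactly where the motivic Adams spectral sequence is known to converge strongly and where there are no hidden phenomena obstructing the conclusion (this is where Morel's vanishing $\pi_{m,n}^{\overline{F}}(S^{0,0}) = 0$ for $m < n$ and the connectivity of the motivic sphere enter). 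In positive characteristic one first compares $\overline{F}$ with $\overline{\f_q}$ by the same rigidity/base-change argument, and then invokes the known comparison $\pi_{s,w}^{\overline{\f_q}}(S^{0,0})[q^{-1}] \cong \pi_{s,w}^{\c}(S^{0,0})$ in the relevant range, which is itself a consequence of the Steenrod-algebra comparison together with the fact that the $\f_q$-motivic Adams spectral sequence in this range looks identical to the $\c$-motivic one after inverting $q$.

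I expect the main obstacle to be controlling the comparison of \emph{differentials} and ruling out exotic extensions outside the ``nice'' range: the isomorphism on $Ext$ is essentially formal from \cite{WO17}, but promoting it to an isomorphism on stable stems requires knowing that the spectral sequences converge and that no differential or extension distinguishes the two. This is precisely why the statement is restricted to $s \geq w \geq 0$ and $s < w$ — outside this range one would need to worry about the failure of the motivic cohomology of $\overline{F}$ to be concentrated in the expected bidegrees, and about convergence issues for the inverse limit defining the completion. The honest content of the proof is therefore the verification that, in the stated range, the cell structure of $S^{0,0}$ and the sparsity of $Ext$ force the comparison map on $E_\infty$-pages to assemble into an isomorphism of filtered abelian groups, which then gives the claimed isomorphism $\pi_{s,w}^{\overline{F}}(S^{0,0})[q^{-1}] \cong \pi_{s,w}^{\c}(S^{0,0})[q^{-1}]$.
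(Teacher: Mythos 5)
Your proposal is correct and takes essentially the paper's route: the paper simply cites \cite[Thm. 1.1]{WO17} for $q>2$ and the proof of \cite[Prop. 7]{Wil18} for $q=0$, and those arguments are precisely the base-change comparison of $(2,\eta)$-completed motivic Adams spectral sequences over algebraically closed fields (isomorphic $E_2$-pages after inverting the exponential characteristic, together with convergence in the range $s \geq w \geq 0$) that you sketch. The one point to phrase carefully is that in characteristic zero there is in general no field map between $\overline{F}$ and $\c$, so the comparison must run through the common algebraically closed subfield $\overline{\q}$, as you note parenthetically.
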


\begin{proof}
If $q > 2$, this follows from \cite[Thm. 1.1]{WO17}. If $q = 0$, this follows from the proof of \cite[Prop. 7]{Wil18}.
\end{proof}

\begin{defin}
We define $v^{4i}_1 \eta$, $v^{4i}_1 \tau \eta^2$, $v^{4i}_1 \tau \eta^3$, and $v^{4i}_1 8 \sigma$ to be the classes in $\pi_{**}^{\overline{F}}(S^{0,0})$ corresponding to the classes with the same names in $\pi_{**}^\c(S^{0,0})$ under the isomorphism in Theorem \ref{Thm:WO}. 
\end{defin}

Our primary goal in the remainder of this section is to construct lifts of these to prime fields. 

\subsection{Computations over finite fields of prime order}\label{Section:FiniteFields}

We start by constructing some infinite families in $\pi_{**}^{\f_q}(S^{0,0})$ using the $\rho$-Bockstein spectral sequence \cite{Hil11} and base-change along $\f_q \to \overline{\f}_q$. We break the analysis into two cases (Lemmas \ref{Lem:Extq1} and \ref{Lem:Extq3}) depending on the congruence class of $q$ modulo four; the results are summarized in Theorem \ref{Thm:FamiliesFiniteFields} for future reference. We then study the $\f_q$-motivic homotopy of a certain stunted lens spectrum. The computations in this section serve as a warm-up for the analogous computations over $\Spec(\q_\nu)$ ($\nu$ any prime) in Section \ref{Section:pAdicRationals} and \ref{Section:Rationals}. They will also be used in Section \ref{Section:PrimeMI}. 

We note that the $\rho$-Bockstein spectral sequence converging to $\pi_{**}^{\f_q}(kq)$ was studied by Kylling \cite{Kyl15} and the $\rho$-Bockstein spectral sequence for $\pi_{**}^{\f_q}(S^{0,0})$ was studied by Wilson \cite{Wil16} and Wilson-{\O}stv{\ae}r \cite{WO17}. We refer the reader to their work, as well as \cite{Hil11} and \cite{GHIR17}, for further applications of the $\rho$-Bockstein spectral sequence. 

\begin{defin}
The \emph{Milnor-Witt degree} of a class $\alpha \in \pi_{s,w}^k(S^{0,0})$ is defined to be $MW(x) := s-w$.  
\end{defin}

Recall that in \cite[Sec. 5]{Qui19b}, we constructed classes $v^{4i}_1 \tau \eta^j \in \pi_{**}^\r(S^{0,0})$ for all $i \geq 0$ and $2 \leq j \leq 3$. The class $v^{4i}_1 \tau \eta^j$ is (by definition) detected by $P^i \tau h_1^j \in Ext^{***}_{A^\r}$ where $P(-)$ is the matric Massey product
$$P(x) := \left\langle \begin{bmatrix} h_3 & \rho^3 h_1^2 \end{bmatrix}, \begin{bmatrix} h_0^4 \\  c_0 \end{bmatrix}, x \right\rangle.$$
Note that if $\rho^3 = 0$ in $\m_2^k$, then this operator simplifies to the Massey product $\langle h_3, h_0^4, - \rangle$ studied in \cite{Qui19a} which lifted the periodicity operator introduced by Adams in \cite{Ada66b}. 

The following lemma allows us to construct analogous classes in $Ext^{***}_{A^F}$ where $F$ is any field of characteristic not two. We refer the reader to \cite[Sec. 2.1]{IO18} for a review of Milnor K-theory $K^M_*(F)$. 

\begin{lem}[{\cite[Lem. 4.1]{KW18}}]\label{Lem:KW}
Let $Ext^i$ denote the $i$-th $Ext$-group. Over a field of characteristic not two, we have for each $i$ an extension
$$0 \to Ext^i_{A^\r} \otimes_{\f_2[\rho]} K^M_*(F)/2 \to Ext^i_{A^F} \to Tor_1^{\f_2[\rho]}(Ext^{i+1}_{A^\r}, K^M_*(F)/2) \to 0.$$
\end{lem}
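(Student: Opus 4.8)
The plan is to reduce the computation of $Ext^{***}_{A^F}$ to that of $Ext^{***}_{A^\r}$ by exploiting the structure of the $F$-motivic Steenrod algebra as a base change of the $\r$-motivic one. The key input is the known structure of $\m_2^k$ for a field $k$ of characteristic not two: by work of Voevodsky (and the Milnor conjecture), $\m_2^k \cong K^M_*(k)/2[\tau]$ with $\rho = [-1] \in K^M_1(k)/2$, and the whole motivic Steenrod algebra $A^k$ together with its dual $A^k_*$ is obtained from the $\r$-motivic (equivalently $\f_2[\rho]$-linear "universal'') version by extension of scalars along $\f_2[\rho] \to K^M_*(F)/2$. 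I would first record this: $A^F_* \cong A^\r_* \otimes_{\f_2[\rho]} K^M_*(F)/2$ as a Hopf algebroid (or comodule algebra), so that the cobar complex computing $Ext_{A^F}$ is $C^\bullet_{A^\r} \otimes_{\f_2[\rho]} K^M_*(F)/2$.

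Next I would invoke flat/universal-coefficient considerations. Since $\f_2[\rho]$ is a PID (a polynomial ring in one variable over a field), every module has projective dimension at most $1$, and in particular $K^M_*(F)/2$ has a length-$1$ free resolution over $\f_2[\rho]$. Tensoring the cobar complex $C^\bullet_{A^\r}$ (a complex of $\f_2[\rho]$-modules whose cohomology is $Ext^*_{A^\r}$) with $K^M_*(F)/2$ and applying the universal coefficient spectral sequence — which, over a PID, degenerates to the short exact sequence
\[
0 \to Ext^i_{A^\r} \otimes_{\f_2[\rho]} K^M_*(F)/2 \to H^i\!\left(C^\bullet_{A^\r} \otimes_{\f_2[\rho]} K^M_*(F)/2\right) \to Tor_1^{\f_2[\rho]}\!\left(Ext^{i+1}_{A^\r}, K^M_*(F)/2\right) \to 0
\]
— yields exactly the claimed extension once we identify the middle term with $Ext^i_{A^F}$. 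The only subtlety here is a boundedness/flatness hypothesis needed to commute cohomology past the tensor product: one wants $C^\bullet_{A^\r}$ to consist of flat (e.g. free) $\f_2[\rho]$-modules, which holds since the cobar complex is built out of tensor powers of $A^\r_*$, and $A^\r_*$ is free over $\f_2[\rho]$. The degree conventions (tri-grading by stem, filtration, weight, and bookkeeping the internal grading of $K^M_*(F)/2$) should be checked but are routine.

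I expect the main obstacle to be the first step: carefully justifying that $A^F_*$ really is $A^\r_* \otimes_{\f_2[\rho]} K^M_*(F)/2$ compatibly with the comodule structure, so that the cobar complexes match up on the nose. This requires citing the computation of the motivic Steenrod algebra over general base fields (Hoyois–Kelly–Østvær, building on Voevodsky), together with the identification of $\m_2^F$ with $K^M_*(F)/2[\tau]$ and the behavior of $\rho$ under base change. Granting that, the homological algebra is the standard universal-coefficient argument over a PID, and the rest is bookkeeping; since the lemma is quoted from \cite[Lem. 4.1]{KW18}, I would in practice simply cite that reference and sketch only the cobar/universal-coefficient mechanism as above.
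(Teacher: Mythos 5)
Your proposal is correct and takes essentially the same route as the source: the paper gives no proof of this lemma, quoting it directly from \cite[Lem.~4.1]{KW18}, and the argument there is precisely the one you sketch --- identify $\m_2^F \cong K^M_*(F)/2[\tau]$ and $A^F_*$ (hence the cobar complex) as the base change of the $\r$-motivic one along $\f_2[\rho] \to K^M_*(F)/2$, observe that the cobar terms are free over the PID $\f_2[\rho]$, and apply the universal coefficient sequence to get the stated extension with the $Tor_1$ term in cohomological degree $i+1$. The one input you rightly flag as needing care, the base-change description of the dual Steenrod algebra with structure formulas defined over $\f_2[\tau,\rho]$ (Voevodsky, Hoyois--Kelly--{\O}stv{\ae}r in positive characteristic), is exactly what the cited proof rests on, so your sketch is faithful to it.
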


In particular, there is an injective map
$$\phi_F : Ext^i_{A^\r} \otimes_{\f_2[\rho]} K_*^M(F)/2 \to Ext^i_{A^F}$$
for any field $F$ of characteristic not two. 

We recall the following calculation of the Milnor K-theory of finite fields for the reader's convenience.

\begin{thm}[{\cite{Mil70}\cite[Ex. 2.6]{IO18}}]
The Milnor K-theory of a finite field $\f_q$ is given by 
$$K^M_*(\f_q) \cong \z[u]/u^2$$
where $u = [a]$ is the class of any generator $a \in \f_q^\times \cong K_1^M(\f_q)$. In particular, we have
$$K^M_*(\f_q)/2 \cong \begin{cases}
\f_2[u]/u^2 \quad & \text{ if } q \equiv 1 \mod 4, \\
\f_2[\rho]/\rho^2 \quad & \text{ if } q \equiv 3 \mod 4,
\end{cases}
$$
where $\rho = [-1]$. 
\end{thm}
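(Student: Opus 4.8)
The plan is to reduce the statement to Milnor's computation of the Milnor $K$-theory of finite fields, followed by a short bookkeeping argument with the cyclic group $\f_q^\times$. For any field one has $K^M_0(F) = \z$ and $K^M_1(F) = F^\times$ directly from the definition, so the real content is the vanishing of $K^M_n(\f_q)$ for $n \geq 2$ together with the identification $K^M_1(\f_q) = \f_q^\times \cong \z/(q-1)$; the latter is immediate once one recalls that $\f_q^\times$ is cyclic of order $q-1$, and fixing a generator $a$ and setting $u := [a]$ identifies $K^M_1(\f_q)$ with the cyclic group generated by $u$.

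For the vanishing in degrees $\geq 2$ it suffices to treat $n = 2$, since $K^M_*(\f_q)$ is generated as a ring in degree one, so any symbol $\{a_1, \dots, a_n\}$ with $n \geq 2$ factors as $\{a_1, a_2\} \cdot \{a_3, \dots, a_n\}$ through $K^M_2(\f_q)$. To see $K^M_2(\f_q) = 0$ I would argue as follows (this is Milnor's argument, \cite{Mil70}): writing $m = q - 1$, the group $\f_q^\times \otimes_\z \f_q^\times$ is cyclic of order $m$ generated by $a \otimes a$, so $K^M_2(\f_q)$ is a cyclic quotient generated by $\{a, a\}$; since $q$ is odd, $m$ is even and $-1 = a^{m/2}$, so the standard identity $\{x, x\} = \{-1, x\}$ and bilinearity give $\{a, a\} = \tfrac{m}{2}\{a, a\}$, whence $\{a, a\}$ is annihilated by $\gcd\bigl(m, \tfrac{m}{2} - 1\bigr)$, which divides $2$. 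One then kills an odd multiple of $\{a,a\}$ using a Steinberg relation: there is $b \in \f_q \setminus \{0, 1\}$ with $b$ and $1 - b$ both non-squares (an elementary character-sum count for large $q$, checked directly for the finitely many small $q$), and writing $b = a^i$, $1 - b = a^j$ with $i, j$ odd, the relation $\{b, 1-b\} = 0$ reads $ij\{a, a\} = 0$ with $ij$ odd; combined with the previous bound this forces $\{a, a\} = 0$. Hence $K^M_n(\f_q) = 0$ for $n \geq 2$ and $K^M_*(\f_q) \cong \z[u]/u^2$ as graded rings, with $u$ in degree one (and $(q-1)u = 0$).

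It remains to reduce mod $2$ and locate the generator. Since $q$ is odd, $q - 1$ is even, so $K^M_1(\f_q)/2 = \f_q^\times \otimes \z/2 \cong \f_2$ with generator the reduction $\bar{u}$ of $u$, while $K^M_n(\f_q)/2 = 0$ for $n \geq 2$; thus $K^M_*(\f_q)/2 \cong \f_2[u]/u^2$. To decide whether $\rho := [-1]$ is the generator, note $-1 = a^{m/2}$ gives $[-1] = \tfrac{m}{2}\,u$ in $K^M_1(\f_q)$, so in $K^M_1(\f_q)/2 \cong \f_2$ the class $\rho$ is nonzero exactly when $\tfrac{q-1}{2}$ is odd, i.e. when $q \equiv 3 \mod 4$. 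In that case $\rho$ itself generates and $K^M_*(\f_q)/2 \cong \f_2[\rho]/\rho^2$; when $q \equiv 1 \mod 4$ we have $\rho = 0$ and must take $u = [a]$ for a non-square generator $a$, giving $K^M_*(\f_q)/2 \cong \f_2[u]/u^2$ with $\rho = 0$, as claimed. The only non-formal ingredient is the vanishing of $K^M_2(\f_q)$; as this is classical (and already credited to \cite{Mil70}), one may alternatively invoke it directly, after which the argument is pure bookkeeping with $\f_q^\times$ and its $2$-torsion, so I do not anticipate any real obstacle.
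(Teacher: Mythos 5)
Your argument is correct, but it is worth noting that the paper does not prove this statement at all: it is recalled purely as a classical computation, with the proof delegated to the citations (Milnor's original paper and the example in Isaksen--{\O}stv{\ae}r). What you have written is essentially a self-contained reconstruction of Milnor's argument: $K^M_0$ and $K^M_1$ are formal, vanishing in degrees $\geq 2$ reduces to $K^M_2(\f_q)=0$ by multiplicative generation in degree one, and your treatment of $K^M_2$ --- cyclicity of $\f_q^\times\otimes\f_q^\times$, the identity $\{x,x\}=\{-1,x\}$ giving $\gcd\bigl(q-1,\tfrac{q-1}{2}-1\bigr)\mid 2$ as an annihilator of $\{a,a\}$, and a Steinberg relation $\{b,1-b\}=0$ with $b$ and $1-b$ both non-squares killing an odd multiple --- is sound; the mod~$2$ bookkeeping via $[-1]=\tfrac{q-1}{2}[a]$ correctly produces the dichotomy $q\equiv 1$ versus $q\equiv 3 \bmod 4$. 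One small simplification: the character-sum count gives exactly $\tfrac{1}{4}\bigl(q-\chi(-1)\bigr)$ elements $b$ with $b$ and $1-b$ both non-squares, which is positive for every odd $q$, so no separate check of small cases is needed. The trade-off between the two routes is the usual one: the paper's citation keeps the exposition short for a result that is genuinely classical, while your version makes the statement self-contained and, usefully for this paper, makes completely transparent where the congruence condition on $q$ enters (namely, whether $\rho=[-1]$ is a square in $\f_q^\times$), which is exactly the dichotomy exploited later in the $\rho$-Bockstein arguments.
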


\begin{lem}\label{Lem:Extq1}
The following statements hold over $\Spec(\f_q)$ where $q \equiv 1 \mod 4$:
\begin{enumerate}
\item For all $i \geq 0$, the classes $P^i h_1$, $P^i \tau h_1^2$, $P^i \tau h_1^3$, and $P^i h_0^3 h_3$ are well-defined classes in $Ext_{\f_q}$.
\item The classes above are permanent cycles in the $\f_q$-motivic Adams spectral sequence.
\item The classes above detect nontrivial classes in $\pi_{**}^{\f_q}(S^{0,0})$ which base-change to the classes with the same name in $\pi_{**}^{\overline{\f}_q}(S^{0,0})$. 
\end{enumerate} 
\end{lem}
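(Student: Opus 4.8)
The plan is to deduce all three parts from the Koszul–Wilson extension (Lemma~\ref{Lem:KW}) together with the known structure of $Ext_{A^\r}$ and base-change compatibility, handling the case $q\equiv 1\bmod 4$ where $K^M_*(\f_q)/2\cong\f_2[u]/u^2$ with $|u|=0$ in the motivic weight but Milnor–Witt degree $-1$ — so $u$ behaves, as far as the extension is concerned, just like $\rho$ does over $\r$, except that $u^2=0$. First I would invoke $\phi_{\f_q}$ to produce, from the $\r$-motivic classes $P^i h_1$, $P^i\tau h_1^2$, $P^i\tau h_1^3$, $P^i h_0^3 h_3$, their images under $Ext^i_{A^\r}\otimes_{\f_2[\rho]}K^M_*(\f_q)/2\hookrightarrow Ext^i_{A^{\f_q}}$; since $\phi_{\f_q}$ is injective and these $\r$-motivic classes are nonzero, the resulting $\f_q$-motivic classes are nonzero. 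The only subtlety in part~(1) is that the matric Massey product $P(-)=\langle[h_3\ \rho^3 h_1^2],[h_0^4\ c_0]^T,-\rangle$ must still make sense over $\f_q$; but $\rho^3=0$ in $\m_2^{\f_q}$ when $q\equiv 1\bmod 4$ (indeed $\rho^2=0$ there since $K^M_*(\f_q)/2$ is generated in degree $\le 1$ with $u^2=0$, and $\rho$ is a unit multiple of $u$ or zero), so $P(-)$ collapses to the ordinary Massey product $\langle h_3,h_0^4,-\rangle$, which is defined because $h_3 h_0^4=0$ and $h_0^4\cdot h_0^4=0$ already in $Ext_{A^\r}$ (hence in $Ext_{A^{\f_q}}$ via $\phi$). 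So the operator $P$ is defined over $\f_q$ and, by naturality of Massey products under the map $Ext_{A^\r}\to Ext_{A^{\f_q}}$, its iterates are carried to the named classes.

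For part~(2), the permanent-cycle claim follows exactly as in the $\c$- and $\r$-motivic cases: each of $P^i h_1$, $P^i\tau h_1^2$, $P^i\tau h_1^3$, $P^i h_0^3 h_3$ sits in a stem where, for degree reasons, there is no room for an incoming or outgoing Adams differential — I would point to the relevant vanishing region of $Ext^{s,f,w}_{A^{\f_q}}$, which one reads off from the $\r$-motivic chart via Lemma~\ref{Lem:KW} together with $K^M_*(\f_q)/2$ being concentrated in Milnor–Witt degrees $0$ and $-1$. Concretely, a differential out of $P^i(-)$ would have to land in a group that the extension shows is zero in that tridegree, so the class survives. Alternatively — and this is the cleaner route — one can bypass the differential analysis entirely by using part~(3): base-change $\f_q\to\overline{\f}_q$ sends these classes to the $\overline{\f}_q$-motivic classes of the same name, which are permanent cycles by Theorem~\ref{Thm:WO} (they correspond to the $\c$-motivic classes, which are permanent cycles detecting Adams' families), and a class whose image under base-change is a permanent cycle detecting a nonzero homotopy class is itself a permanent cycle detecting a nonzero homotopy class.

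For part~(3), I would argue as follows: the composite $Ext_{A^{\f_q}}\to Ext_{A^{\overline{\f}_q}}\cong Ext_{A^\c}$ (the isomorphism after inverting $q$, from Theorem~\ref{Thm:WO} at the $Ext$ level, or directly from base-change along $\f_q\to\overline{\f}_q$ which kills $\rho$) carries $\phi_{\f_q}(P^i x\otimes 1)$ to $P^i x$ in $Ext_{A^\c}$, because base-change is compatible with the formation of $P$ and sends $u\mapsto 0$, $1\mapsto 1$. Hence the $\f_q$-motivic permanent cycle detects a homotopy class $\tilde\alpha\in\pi^{\f_q}_{**}(S^{0,0})$ whose base-change to $\pi^{\overline{\f}_q}_{**}(S^{0,0})$ is detected by $P^i x$, i.e.\ is the class named $v_1^{4i}\eta$, $v_1^{4i}\tau\eta^2$, $v_1^{4i}\tau\eta^3$, or $v_1^{4i}8\sigma$. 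Since that target class is nonzero, $\tilde\alpha$ is nonzero. The main obstacle — and the one place I would be careful — is \emph{keeping the Massey-product operator $P$ honestly defined and natural over $\f_q$}: one must check that the defining null-homotopies for $\langle h_3,h_0^4,-\rangle$ exist over $\f_q$ (they do, being pulled back from $\r$ or even from $\c$) and that $\phi_{\f_q}$ intertwines $P$ over $\r$ with $P$ over $\f_q$, which requires knowing that the indeterminacy does not grow in a way that breaks the identification of the named classes; this is where I would spend the real effort, whereas the permanent-cycle and nontriviality statements are then formal consequences of base-change as above.
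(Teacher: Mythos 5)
Most of your proposal tracks the paper's own argument: for part (1) the paper simply takes the classes to be the images under $\phi_{\f_q}$ of the $\c$-motivic classes (using $Ext_{\f_q}\cong Ext_\c\otimes E(u)$ from \cite[Prop.~7.1]{WO17}, which is equivalent to your use of Lemma~\ref{Lem:KW} since $\rho=[-1]=0$ when $q\equiv 1\bmod 4$ --- note $\rho$ is actually zero here, not a unit multiple of $u$); for part (2) the paper argues, as in your first suggestion, that Adams differentials preserve weight and decrease stem by one and that the target tridegrees vanish by \cite[Prop.~5.4]{Qui19a}; and part (3) is the same base-change argument you give. The paper also avoids your Massey-product bookkeeping by defining the classes as $\phi_{\f_q}$-images rather than re-forming $P(-)$ over $\f_q$, but that is a stylistic difference.

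The genuine problem is the route you declare ``cleaner'' for part (2). Base-change induces a map of Adams spectral sequences \emph{from} the $\f_q$-motivic one \emph{to} the $\overline{\f}_q$-motivic one, so naturality only gives $f^*(d_r(x))=d_r(f^*(x))=0$; it does not give $d_r(x)=0$. A class whose image under base-change is a permanent cycle detecting a nonzero homotopy class can still support a differential, provided the target lies in the kernel of $f^*$ --- here, exactly the $u$-divisible classes in $Ext_{\f_q}\cong Ext_\c\otimes E(u)$. Nothing in the statement ``the image survives and is nonzero'' rules this out, so you cannot ``bypass the differential analysis entirely''; that analysis (checking that the relevant tridegrees, \emph{including their $u$-multiples}, vanish, which is where the extra weight shift from $|u|$ must be fed into the vanishing range of \cite[Prop.~5.4]{Qui19a}) is the actual content of part (2), and it is what the paper does. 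The direction you do need for part (3) --- once the class is known to survive, its nontriviality follows because its base-change detects a nonzero class over $\overline{\f}_q$ --- is fine and agrees with the paper.
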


\begin{proof}
\begin{enumerate}
\item We have $Ext_{\f_q} \cong Ext_\c \otimes E(u)$ where $|u| = (0,-1,-1)$ by \cite[Prop. 7.1]{WO17}. These classes are just the images under $\phi_F$ of the classes in $Ext_\c$ with the same name tensored with $1$.
\item Adams differentials preserve motivic weight and decrease stem by one. It follows from \cite[Prop. 5.4]{Qui19a} that there are no possible targets for Adams differentials on these classes.
\item The statement about base-change is clear since $\pi_{**}^{\overline{\f}_q}(S^{0,0}) \cong \pi_{**}^\c(S^{0,0})$ and base-change sends clases in $\pi_{**}^{\f_q}(S^{0,0})$ to classes with the same name in $\pi_{**}^{\overline{\f}_q}(S^{0,0})$. Since base-change induces a map of Adams spectral sequences and the classes with the same name in the $\overline{\f}_q$-motivic Adams spectral sequence are nonzero, it follows from Part (2) that the classes we are interested in detect nonzero classes in the $\f_q$-motivic Adams spectral sequence. 
\end{enumerate}
\end{proof}

\begin{lem}\label{Lem:Extq3}
The following statements hold over $\Spec(\f_q)$ where $q \equiv 3 \mod 4$:
\begin{enumerate}
\item For all $i \geq 0$, the classes $P^i h_1$, $P^i \tau h_1^2$, $P^i \tau h_1^3$, and $P^i h_0^3 h_3$ are permanent cycles in the $\rho$-Bockstein spectral sequence. 
\item There are no classes in higher $\rho$-Bockstein filtration which contribute to the same tridegrees of $Ext_{\f_q}$ as the classes above.
\item The classes above are permanent cycles in the $\f_q$-motivic Adams spectral sequence.
\item The classes above detect nontrivial classes in $\pi_{**}^{\f_q}(S^{0,0})$ which base-change to the classes with the same name in $\pi_{**}^{\overline{\f}_q}(S^{0,0})$. 
\end{enumerate}
\end{lem}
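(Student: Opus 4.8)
The plan is to mirror the structure of the proof of Lemma \ref{Lem:Extq1}, but to replace the direct appeal to the splitting $Ext_{\f_q} \cong Ext_\c \otimes E(u)$ (which is unavailable when $q \equiv 3 \bmod 4$, since now $K^M_*(\f_q)/2 \cong \f_2[\rho]/\rho^2$ carries the nontrivial class $\rho = [-1]$) by an explicit run of the $\rho$-Bockstein spectral sequence. First I would set up the $\rho$-Bockstein spectral sequence of Hill \cite{Hil11}, which has input $Ext_{\c}^{***}[\rho]$ and converges to $Ext_{\f_q}^{***}$ after imposing the relation $\rho^2 = 0$ coming from $K^M_*(\f_q)/2 \cong \f_2[\rho]/\rho^2$; equivalently, one runs the $\rho$-Bockstein spectral sequence over $\Spec(\r)$ and then base-changes along $\r \to \f_q$, using Lemma \ref{Lem:KW} to control how classes survive. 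The point of Part (1) is that the classes $P^i h_1$, $P^i \tau h_1^2$, $P^i \tau h_1^3$, and $P^i h_0^3 h_3$ — which are defined over $\Spec(\r)$ via the matric Massey product $P(-)$ recalled above — are permanent cycles in this spectral sequence, hence give classes in $Ext_{\f_q}$.

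For Part (1) the key steps are: (i) recall from \cite[Sec. 5]{Qui19b} that these classes are permanent cycles in the $\r$-motivic $\rho$-Bockstein spectral sequence (or at least survive to define elements of $Ext_{A^\r}$); (ii) observe that possible $\rho$-Bockstein differentials \emph{out} of these classes land in tridegrees that, by the stem/weight/filtration bookkeeping of \cite[Prop. 5.4]{Qui19a}, are zero in $Ext_\c[\rho]$ — so there are no targets; and (iii) note that over $\Spec(\f_q)$ the spectral sequence collapses even faster because $\rho^2 = 0$, so in fact only the $\rho^0$ and $\rho^1$ columns survive and differentials supported on our classes are even more constrained. Part (2) — that no class in higher $\rho$-Bockstein filtration contributes to the same tridegree — is again a bookkeeping statement: the relevant tridegrees $(s,f,w)$ of $P^i h_1$ etc.\ are exactly the images of the corresponding $\c$-motivic tridegrees under $\rho^0$, and one checks using \cite[Prop. 5.4]{Qui19a} (sparseness of $Ext_\c$ near these degrees) that the only other potential contribution, namely $\rho \cdot (\text{something in } Ext_\c)$, would require a nonzero $\c$-motivic class in a degree where none exists. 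Part (3) then follows as in Lemma \ref{Lem:Extq1}(2): Adams differentials preserve weight and drop the stem by one, and \cite[Prop. 5.4]{Qui19a} rules out targets, so the classes are permanent cycles in the $\f_q$-motivic Adams spectral sequence. Part (4) follows as in Lemma \ref{Lem:Extq1}(3): base-change along $\f_q \to \overline{\f}_q$ induces a map of Adams spectral sequences, sends these classes to the classes of the same name (which are nonzero in $\pi_{**}^{\overline{\f}_q} \cong \pi_{**}^\c$ by Theorem \ref{Thm:WO}), and since they are permanent cycles upstairs they must detect nonzero classes in $\pi_{**}^{\f_q}(S^{0,0})$ that base-change correctly.

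The main obstacle is Part (2): ruling out \emph{hidden} contributions in higher $\rho$-Bockstein filtration. Because $\rho$ is now a genuinely nonzero class (unlike over $\c$), one cannot simply quote a splitting; instead one must argue degree-by-degree that the $\rho^1$-column of $Ext_\c[\rho]/\rho^2$ contains nothing in the tridegrees of $P^i h_1$, $P^i \tau h_1^2$, $P^i \tau h_1^3$, $P^i h_0^3 h_3$. I expect this to reduce to the charts/computations of R\"ondigs--Spitzweck--\O stv\ae r and Isaksen near the relevant stems, combined with the $v_1$-periodic sparseness in \cite[Prop. 5.4]{Qui19a}; the $\tau$ in $P^i \tau h_1^j$ shifts the weight by one, so one must be slightly careful to check the weight of the putative $\rho$-multiple matches, but this is exactly the kind of check that \cite[Prop. 5.4]{Qui19a} is designed to handle.
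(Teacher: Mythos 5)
Your overall route is the same as the paper's: run the $\rho$-Bockstein spectral sequence with $E_1 \cong Ext_\c \otimes \f_2[\rho]/\rho^2$, note it collapses at $E_2$ because $\rho^2 = 0$, use the sparseness result \cite[Prop. 5.4]{Qui19a} to rule out differentials, and finish (3), (4) exactly as in Lemma \ref{Lem:Extq1}. Parts (1), (3), and (4) of your argument are fine.

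The gap is in Part (2). You assert that any contribution in higher $\rho$-Bockstein filtration would have the form $\rho\cdot z$ with $z$ a $\c$-motivic class "in a degree where none exists," so that sparseness alone finishes the job. That is false for the family $P^i\tau h_1^2$: the class $P^i\tau h_1^2$ sits in $(\mathrm{stem},\mathrm{filt},\mathrm{weight}) = (8i+2,\,4i+2,\,4i+1)$, and the nonzero class $P^i h_0 h_2 \in Ext_\c^{8i+3,\,4i+2,\,4i+2}$ gives $\rho P^i h_0 h_2$ on the $E_1$-page in exactly that tridegree. So sparseness does not eliminate it, and your "weight bookkeeping" check, as stated, would fail here. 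What saves the statement is a $\rho$-Bockstein differential, not a vanishing of groups: the $d_1$-differential $d_1(\tau) = \rho h_0$ propagates to $d_1(P^i\tau h_2) = \rho P^i h_0 h_2$ for all $i \geq 0$, so $\rho P^i h_0 h_2$ dies at $E_2$ and contributes nothing to $Ext_{\f_q}$ in that tridegree. Your proof needs this extra step (or some equivalent argument identifying and killing this class); without it, Part (2) — and hence the nontriviality claim in Part (4), which relies on there being no other classes in the same tridegree that could cause ambiguity in detection — is not established for the $v_1^{4i}\tau\eta^2$ family. For the other three families ($P^i h_1$, $P^i\tau h_1^3$, $P^i h_0^3 h_3$) the inspection-of-$Ext_\c$ argument you propose does go through.
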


\begin{proof}

We will implicitly use the fact that $\rho^2=0$ in $\m_2^{\f_q}$ in this proof. In particular, this implies that the $\rho$-Bockstein spectral sequence collapses at $E_2$ so we only need to consider $d_1$-differentials. 

\begin{enumerate}
\item We provide the proof for $P^i h_1$; the other cases are similar. The case $i = 0$ and $i=1$ follows from \cite{WilCharts}. In general, we have $MW(P^i h_1) = 4i$, $stem(P^i h_1) = 8i+1$, and $filt(P^i h_1) = 4i+1$. The possible targets of a $\rho$-Bockstein differential have the form $\rho z$ where $z \in Ext^{8i+1, 4i+2, 4i+2}_{A^\c}$, but this group is zero by \cite[Prop. 5.4]{Qui19a}. 

\item For $P^i h_1$, $P^i \tau h_1^2$, and $P^i h_0^3 h_3$, this is clear from inspection of $Ext_\c$ in \cite{Isa14b} along with \cite[Prop. 5.4]{Qui19a}. On the other hand, the class $\rho P^i h_0 h_2$ in the $E_1$-page of the $\rho$-Bockstein spectral sequence converging to $Ext_{\f_q}$ could contribute to the same tridegree as $P^i \tau h_1^2$. However, we have $d_1(P^i \tau h_2) = \rho P^i h_0 h_2$ for all $i \geq 0$, so this does not occur.

\item This follows from \cite[Prop. 5.4]{Qui19a} by similar arguments since Adams differentials preserve motivic weight and decrease stem by one. 

\item The base-change statement is clear from Part (1). Since base-change induces a map of Adams spectral sequences and the classes with the same name in the $\overline{\f}_q$-motivic Adams spectral sequence are nonzero, it follows from Part (2) that the classes we are interested in detect nonzero classes in the $\f_q$-motivic Adams spectral sequence. 
\end{enumerate}
\end{proof}

We summarize the key results from the previous two lemmas in the following definition and theorem. 

\begin{defin}
We define $v^{4i}_1 \eta$, $v^{4i}_1 \tau \eta^2$, $v^{4i}_1 \tau \eta^3$, and $v^{4i}_1 8\sigma$ to be the classes in $\pi_{**}^{\f_q}(S^{0,0})$ detected by $P^i h_1$, $P^i \tau h_1^2$, $P^i \tau h_1^3$, and $P^i h_0^3 h_3$, respectively. 
\end{defin}

\begin{thm}\label{Thm:FamiliesFiniteFields}
The classes $v^{4i}_1 \eta$, $v^{4i} \tau \eta^2$, $v^{4i}_1 \tau \eta^3$, and $v^{4i}_1 8\sigma$ in $\pi_{**}^{\f_q}(S^{0,0})$ are nonzero. 
\end{thm}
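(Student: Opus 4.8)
The plan is to deduce Theorem \ref{Thm:FamiliesFiniteFields} directly from Lemmas \ref{Lem:Extq1} and \ref{Lem:Extq3}, which between them cover the two congruence classes $q \equiv 1, 3 \bmod 4$. Both lemmas establish (in their Part (2)/(3) and Part (3)/(4), respectively) that the classes $P^i h_1$, $P^i \tau h_1^2$, $P^i \tau h_1^3$, and $P^i h_0^3 h_3$ detect nontrivial classes in $\pi_{**}^{\f_q}(S^{0,0})$. So the proof is essentially an invocation of the preceding two lemmas, with a remark that the definition immediately above the theorem names the detected classes $v_1^{4i}\eta$, $v_1^{4i}\tau\eta^2$, $v_1^{4i}\tau\eta^3$, $v_1^{4i}8\sigma$.

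First I would observe that any odd prime $q$ satisfies exactly one of $q \equiv 1 \bmod 4$ or $q \equiv 3 \bmod 4$. In the first case, Lemma \ref{Lem:Extq1}(3) asserts precisely that these four families detect nontrivial classes in $\pi_{**}^{\f_q}(S^{0,0})$ that base-change to the classes of the same name in $\pi_{**}^{\overline{\f}_q}(S^{0,0})$. In the second case, Lemma \ref{Lem:Extq3}(4) asserts the same thing. In either case, the classes named $v_1^{4i}\eta$, etc., in the definition preceding the theorem are by construction the classes detected by $P^i h_1$, etc., hence are nonzero.

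The only content beyond quoting the lemmas is making sure the Adams spectral sequence detection argument is airtight: a class on the $E_\infty$-page being nonzero implies the homotopy class it detects is nonzero, which is the standard fact that a nonzero permanent cycle detects a nontrivial (in fact nonzero modulo higher filtration) homotopy element. The upstream lemmas already package the two ingredients needed — that the $Ext$ classes are nonzero on $E_2$ (Part (1) of each, plus the Milnor K-theory computation and Lemma \ref{Lem:KW}, or the $\rho$-Bockstein collapse for $q \equiv 3$) and that they survive to $E_\infty$ (the Adams-differential vanishing arguments using \cite[Prop. 5.4]{Qui19a}) — so no new calculation is required here.

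The main (and only) obstacle is purely bookkeeping: ensuring that the naming conventions in the two definitions are consistent with the two lemmas and that no degenerate case (e.g. whether $i=0$ is allowed, where $P^0 = \mathrm{id}$ so the classes are just $h_1, \tau h_1^2, \tau h_1^3, h_0^3 h_3$) is overlooked. Since the lemmas are stated for all $i \geq 0$, this is not a genuine difficulty. I would therefore write the proof as a two-line case split. Concretely:

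\begin{proof}
Every odd prime $q$ satisfies either $q \equiv 1 \bmod 4$ or $q \equiv 3 \bmod 4$. If $q \equiv 1 \bmod 4$, the classes $P^i h_1$, $P^i \tau h_1^2$, $P^i \tau h_1^3$, and $P^i h_0^3 h_3$ detect nontrivial classes in $\pi_{**}^{\f_q}(S^{0,0})$ by Lemma \ref{Lem:Extq1}(3). If $q \equiv 3 \bmod 4$, the same classes detect nontrivial classes in $\pi_{**}^{\f_q}(S^{0,0})$ by Lemma \ref{Lem:Extq3}(4). In either case, the classes $v^{4i}_1 \eta$, $v^{4i}_1 \tau \eta^2$, $v^{4i}_1 \tau \eta^3$, and $v^{4i}_1 8\sigma$ are, by definition, the homotopy classes detected by $P^i h_1$, $P^i \tau h_1^2$, $P^i \tau h_1^3$, and $P^i h_0^3 h_3$, respectively. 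A nonzero permanent cycle detects a nonzero element of the abutment, so all of these classes are nonzero.
\end{proof}
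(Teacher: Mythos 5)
Your proof is correct and matches the paper's intent exactly: the paper offers no separate argument, explicitly presenting the theorem as a summary of Lemmas \ref{Lem:Extq1} and \ref{Lem:Extq3} together with the preceding definition, which is precisely the case split you carry out. No issues.
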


\begin{prop}\label{Prop:16NullFinite}
Let $L_m$ be the subcomplex of $\underline{L}^\infty_{-\infty}$ with cells in dimension $-5+8m \leq d \leq 2+8m$. Over $\f_q$, the degree $16$ map is null on $L_m$ for all $ m \in \z$. 
\end{prop}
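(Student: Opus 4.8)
The plan is to reduce the claim to a computation with connective Hermitian $K$-theory $\kq$ (equivalently $KQ$) via the Atiyah--Hirzebruch spectral sequence, and then to bootstrap from the algebraically closed case using base-change. First I would set up the relevant Atiyah--Hirzebruch (or cellular) spectral sequence for $[\,L_m, L_m\,]_{0,0}$, whose $E_1$-term is built from the cohomology groups of $L_m$ with coefficients in the stable stems $\pi_{**}^{\f_q}(S^{0,0})$ in the relevant range, filtered by the cell structure with cells in degrees $-5+8m \le d \le 2+8m$. Because $L_m$ has only eight cells, there are only finitely many contributions to track, and the differences in weight are constrained so that the possible contributions to the degree map come from a short list of homotopy classes (multiples of $2$, $\rho\eta$, and their products, together with classes detected by $\eta$, $\nu$, $\sigma$ in the spans between cells).

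The key point is that the mod-$8$ periodicity of the cell structure of $\underline{L}^\infty_{-\infty}$ matches the $v_1^4$-periodicity of $\kq$: the restriction of $n\gamma \to B_{gm}C_2$ to the relevant skeleta is, up to suspension, the same as the bundle defining $L_0$, so $L_m \simeq \Sigma^{8m,4m} L_0$ after $(2,\eta)$-completion. Thus it suffices to prove the statement for a single $m$, say $m=0$, and I would do this by showing that the composite $L_0 \xrightarrow{8} L_0 \xrightarrow{2} L_0$ (i.e.\ the degree $16$ map) is detected to be null in the $\kq$-based Atiyah--Hirzebruch spectral sequence: the relevant $\kq_{**}$-homology groups of $L_0$ are annihilated by $16$ because the image-of-$J$-type pattern in $\pi_{**}^{\f_q}(\kq)$ has $(2+\rho\eta)$-torsion of order dividing $16$ in these degrees, a fact that follows from Kylling's computation \cite{Kyl15} of the $\rho$-Bockstein spectral sequence for $\pi_{**}^{\f_q}(\kq)$ together with the classical/$\c$-motivic computation of $\pi_*(ko)$, $\pi_{**}^\c(\kq)$. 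Concretely, I would identify the cells of $L_0$ contributing to self-maps and check that on each the degree $16$ map factors through a group killed by $16$; since $\kq$ detects the $v_1$-periodic phenomena in this range, nullity in the $\kq$-based spectral sequence upgrades to nullity of the actual self-map.

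To handle all prime fields at once and to control the $\eta$-completed subtleties, I would invoke Lemma~\ref{Lem:ProjBaseChange}: $\underline{L}^\infty_n$ is preserved by base-change, hence so is each finite subcomplex $L_m$, and the degree $16$ self-map is natural in the base field. So it is enough to verify nullity after base-change to $\overline{\f}_q$, where by Theorem~\ref{Thm:WO} we have $\pi_{**}^{\overline{\f}_q}(S^{0,0}) \cong \pi_{**}^\c(S^{0,0})$ in the relevant bidegrees, reducing the problem to the $\c$-motivic statement; and the $\c$-motivic statement is in turn controlled by $\pi_{**}^\c(\kq)$ and ultimately by the classical fact that $16\sigma$-type obstructions vanish, i.e.\ the relevant self-maps of stunted real projective spectra are null after smashing with $ko$ (the input behind the classical $v_1^4$ self-map and the computations of \cite{MR93}, \cite{Qui19a}). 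Then base-change along $\f_q \to \overline{\f}_q$ — using that a map out of a finite spectrum is null iff it is null after this faithfully flat base change on the relevant homotopy groups, in the range where \cite{WO17} gives an isomorphism — yields the claim over $\f_q$.

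The main obstacle I anticipate is \emph{not} the $\c$-motivic or classical input, which is essentially known, but rather the bookkeeping needed to ensure that nullity over $\overline{\f}_q$ descends to $\f_q$: base-change $f^* \colon SH(\f_q) \to SH(\overline{\f}_q)$ is not conservative in general, so one must argue that the specific obstruction groups $[\,L_m, L_m\,]_{**}$ relevant to the degree $16$ map are detected by their images after base-change. This is where Theorem~\ref{Thm:WO} is essential: it guarantees the comparison is an isomorphism precisely in the bidegrees $(s,w)$ with $s \ge w \ge 0$ (or $s < w$), and one must check that all cells of $L_m$ pair into that range — equivalently, that no ``extra'' $\f_q$-motivic homotopy (coming from $K^M_*(\f_q)/2$ in negative Milnor--Witt degree) can obstruct nullity. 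Verifying this range condition for the eight-cell complex $L_m$, and cross-checking that the $d_1$-differentials in the relevant Atiyah--Hirzebruch spectral sequence kill any potential extra contributions (analogous to the differential $d_1(P^i\tau h_2) = \rho P^i h_0 h_2$ used in Lemma~\ref{Lem:Extq3}), is the delicate part of the argument.
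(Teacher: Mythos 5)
The central difficulty of this proposition is exactly the step you defer to the last paragraph, and deferring it leaves the proof with a genuine gap: base change $SH(\f_q)\to SH(\overline{\f}_q)$ is not conservative, and the group controlling the degree $16$ map is $\pi_{0,0}^{\f_q}(L_m\wedge D^{\f_q}L_m)$, which contains contributions coming from $K^M_*(\f_q)/2$ (classes involving $u$ for $q\equiv 1\bmod 4$, resp.\ $\rho$ for $q\equiv 3\bmod 4$) that die under base change and hence are invisible to any argument made over $\overline{\f}_q$ or $\c$. Ruling these out is not a cross-check; it is the entire content of the paper's proof. The paper runs the sphere-based Atiyah--Hirzebruch spectral sequence for $\pi_{0,0}^{\f_q}(L_m\wedge D^{\f_q}L_m)$, lists the extra classes ($uh_0^j$, $uh_2h_0$, $uh_3h_0^i$ when $q\equiv 1\bmod 4$; none when $q\equiv 3\bmod 4$), and kills them using differentials read off from the action of $Sq^1$ and $Sq^8$ on $H^{**}(L_0\wedge DL_0)$ via the Cartan formula. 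In particular your proposed verification, that ``the $d_1$-differentials kill any potential extra contributions,'' fails as stated: the classes $uh_0^ih_3$ carried by the cell $e_{-5}\otimes f_{-1}+e_{-4}\otimes f_{-2}$ are not hit by $d_1$'s and are only removed by $d_8$-differentials coming from $Sq^8$. Likewise your assertion that a map out of a finite spectrum is ``null iff it is null after base change on the relevant homotopy groups, in the range where \cite{WO17} gives an isomorphism'' is circular, because the relevant homotopy groups are precisely the ones \emph{not} covered by that isomorphism.

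The $\kq$-based first half is also not justified. Showing that $16$ dies in a $\kq$-based Atiyah--Hirzebruch spectral sequence, or that $\kq_{**}L_m$ is killed by $16$, only controls the image of the degree map under the unit $S^{0,0}\to \kq$; it does not upgrade to nullity of the actual self-map unless you prove that the relevant part of $[L_m,L_m]_{0,0}$ is detected by $\kq$, which is a separate (and nontrivial) statement that the paper never needs. Instead, the paper imports the nullity of $16$ on $L_m$ over $\c$ (hence over $\overline{\f}_q$ by Wilson--{\O}stv{\ae}r) from \cite[Prop.~5.11]{Qui19a} --- this is where the $ko$/$\kq$-type input already lives --- and reduces the $\f_q$-statement entirely to the bookkeeping described above, reusing the analysis of \cite[Prop.~5.11]{Qui19b} for all classes that are already present over the algebraic closure. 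Your reduction of general $m$ to $m=0$ is fine (the paper handles it via the $8$-fold periodicity of the Steenrod action on $H^{**}(\underline{L}^\infty_{-N})$), but the route through Kylling's computation of $\pi_{**}^{\f_q}(\kq)$, even if completed, would still leave the descent problem --- the extra $u$- and $\rho$-classes --- untouched.
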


\begin{rem2}
The subcomplex $L_0$ may be constructed as follows; the cases $m \neq 0$ can be obtained by suspension. Let $X = \underline{L}_{-3}^\infty = Th(-3\gamma \to B_{gm}C_2)$ and let $Y$ denote the simplicial $2$-skeleton of $X$, so $Y$ has cells in simplicial dimensions $-6 \leq d \leq 2$. Then $L_0$ is the cofiber of the inclusion of the $-6$-cell into $Y$. A cell diagram for $L_0$ can be produced from \cite[Fig. 1]{Qui19a} by restricting to the values $-5 \leq d \leq 2$ along the horizontal axis. 
\end{rem2}

\begin{rem2}
We will freely use the computation of $\pi_{**}^{\f_q}(S^{0,0})$ in low dimensions in the following proof. More precisely, we need to know $\pi_{-2k \mp \epsilon, -k \mp \epsilon}^{\f_q}$ for all $-3 \leq k \leq 3$ and $\epsilon \in \{0,1\}$. There are no Adams differentials in the tridegrees of the $\f_q$-motivic Adams spectral sequence computing these groups \cite[Sec. 7.3-7.4]{WO17}. With this in mind, these groups can be obtained as follows: 
\begin{enumerate}
\item When $q \equiv 1 \mod 4$, the relevant motivic homotopy groups satisfy $\pi_{**}^{\f_q}(S^{0,0}) \cong \pi_{**}^\c(S^{0,0})\{1\} \oplus \pi_{*+1,*+1}^{\c}(S^{0,0}) \{u\}.$ This follows from the previous statement about Adams differentials and the calculation of the Adams $E_2$-page in \cite[Prop. 7.1]{WO17}. 
\item When $q \equiv 3 \mod 4$, the relevant motivic homotopy groups can be obtained from the $E_2$-page of the $\rho$-Bockstein spectral sequence depicted in \cite[Fig. 2]{DI16a} by replacing infinite $\rho$-towers $x[\rho]$ by $x[\rho]/(\rho^2)$ and inserting classes $y \tau^{2i+1}\rho$, $i \geq 0$, for any $y \in \pi_{**}^\c(S^{0,0})$ which supports nontrivial $h_0$-multiplication but is not $h_0$-divisible. This follows from the previous statement about Adams differentials and the $\rho$-Bockstein spectral sequence $d_1$-differential $d_1(\tau) = \rho h_0$ from \cite[Prop. 3.2]{DI16a}. 
\end{enumerate}
\end{rem2}

\begin{proof}
The isomorphism $\pi_{**}^{\overline{\f}_q}(S^{0,0}) \cong \pi_{**}^\c(S^{0,0})$ and \cite[Prop. 5.11]{Qui19a} imply that the result holds over $\overline{\f}_q$. Let $X := L_m \wedge D^{\f_q} L_m$ where $D^{\f_q}(-) := F(-,S^{0,0})$ is the $\f_q$-motivic Spanier-Whitehead dual functor. By the argument from the proof of \cite[Prop. 5.11]{Qui19b}, we see that the class detecting $16$ in $\pi_{0,0}^{\f_q}(X)$ must base-change to a $\tau$-torsion class in $\pi_{**}^{\overline{\f}_q}(S^{0,0})$. 

The $\f_q$-motivic homotopy group $\pi_{0,0}^{\f_q}(X)$ may be computed via the Atiyah-Hirzebruch spectral sequence arising from the filtration of $X$ by topological dimension. As in the proof of \cite[Prop. 5.11]{Qui19b}, the possible contributions to $\pi_{0,0}^{\f_q}(X)$ in the Atiyah-Hirzebruch spectral sequence have the form $\alpha[2k\pm \epsilon, k\pm \epsilon]$ with $\alpha \in \pi_{-2k \mp \epsilon, -k \mp \epsilon}^{\f_q}(S^{0,0})$ where $-3 \leq k \leq 3$ and $\epsilon \in \{0,1\}$. 

In addition to the classes $\alpha$ which base-change to zero in $\pi_{**}^{\overline{\f}_q}(S^{0,0})$, we may omit classes which appeared in the proof of \cite[Prop. 5.11]{Qui19b} since the attaching map structure of $X$ is independent of the base-field. 

When $q \equiv 1 \mod 4$, the classes
$$\alpha \in \{uh_3h_0^i, 1 \leq i \leq 3; uh_2h_0; uh_0^j, j \geq 1\}$$
lie in the correct bidegrees of $\pi_{**}^{\f_q}(S^{0,0})$, base-change to $\tau$-torsion classes (actually to zero) in $\pi_{**}^{\overline{\f}_q}(S^{0,0})$, and have not been considered in the proof of \cite[Prop. 5.11]{Qui19b}. We rule them out using the Atiyah-Hirzebruch spectral sequence.

Recall that the differentials in the Atiyah-Hirzebruch spectral sequence correspond to the attaching maps in $X$. The $d_1$-differentials correspond to attaching maps detected by $h_0$, which in turn correspond to a nontrivial action of $Sq^1$ in $H^{**}(X)$. The action of $Sq^1$ on $H^{**}(X)$ may be calculated using the Cartan formula, the known action of $Sq^1$ on $H^{**}(L_0)$ (as depicted in \cite[Fig. 1]{Qui19a}), and the known action of $Sq^1$ on $H^{**}(DL_0)$ (calculated for a Spanier-Whitehead dual using the action of $Sq^1$ on $H^{**}(X)$ and the fact that $\chi Sq^1 = Sq^1$, where $\chi$ is conjugation in the motivic Steenrod algebra). 

\begin{enumerate}
\item The classes $u h_3 h_0^i$, $1 \leq i \leq 3$, are the targets of $d_1$- and $d_8$-differentials. We verify this claim in the case $m=0$ for simplicity; the other cases follow from $8$-fold periodicity of the action of $Sq^i$ on $H^{**}(\underline{L}^\infty_{-N})$ (where $N$ is any integer) for $i \leq 4$. 

The classes $uh_3 h_0^i$, $1 \leq i \leq 3$, all lie in $\pi_{6,3}^{\f_q}(S^{0,0})$, so we must study the action of $Sq^1$, $Sq^2$, $Sq^4$, and $Sq^8$ on the generators for an additive basis of $H^{-6,-3}(X)$. Any such generator has the form $e_{i} \otimes f_{k}$ where $e_{i} \in H^{i,j}(L_0)$ and $f_{k} \in H^{k,\ell}(DL_0)$ with $i+k=-6$ and $j+\ell = -3$. The only options are then $e_{-5} \otimes f_{-1}$ and $e_{-4} \otimes f_{-2}$. We have 
$$Sq^1(e_{-5} \otimes f_{-1}) = Sq^1(e_{-5}) \otimes f_{-1} + e_{-5} \otimes Sq^1(f_{-1}) = e_{-4} \otimes f_{-1},$$
$$Sq^1(e_{-4} \otimes f_{-2}) = Sq^1(e_{-4}) \otimes f_{-2} + e_{-4} \otimes Sq^1(f_{-2}) = e_{-4} \otimes f_{-1}.$$
We may take a basis to be $\{e_{-5} \otimes f_{-1}, e_{-5} \otimes f_{-1} + e_{-4} \otimes f_{-2}\}$, so then we see that the classes $uh_0^ih_3$ on the cell $e_{-5} \otimes f_{-1}$ are targets of $d_1$-differentials. 

We claim that the remaining classes $uh_0^i h_3$ on the cell corresponding $e_{-5} \otimes f_{-1} + e_{-4} \otimes f_{-2}$ are the targets of $d_8$-differentials. To see this, we apply the Cartan formula to calculate
$$Sq^8(e_{-5} \otimes f_{-1} + e_{-4} \otimes f_{-2}) = e_{0} \otimes f_{2}.$$
We therefore have the claimed $d_8$-differential if we can show that the classes $u h_0^i$ survive on the cell corresponding to $e_0 \otimes f_2$ up to the $E_8$-page of the Atiyah-Hirzebruch spectral sequence.

\item The class $uh_2h_0$ supports a $d_1$-differential.
\item The classes $uh_0^j$, $j \geq 1$, are the targets of $d_1$-differentials. 
\end{enumerate}

When $q \equiv 3 \mod 4$, there are no classes which lie in the correct bidegrees, base-changes to a $\tau$-torsion class in $\pi_{**}^{\overline{\f}_q}(S^{0,0})$, and were not considered in the proof of \cite[Prop. 5.11]{Qui19b}.

We have ruled out any classes which could detect $16$ in $\pi_{0,0}^{\f_q}(X)$, so it must be zero. 
\end{proof}

\subsection{Computations over the $\nu$-adic rationals}\label{Section:pAdicRationals}

We now make the analogous computations over $\Spec(\q_\nu)$. The computations of this section serve as input for Section \ref{Section:Rationals}. 

The mod two Milnor K-theory of $\q_\nu$ is given below; see \cite[Pg. 10]{Wil18} for details.
\[K^M_*(\q_\nu)/2 \cong \begin{cases}
\f_2[\pi, u]/(\pi^2, u^2) \quad & \text{ if } \nu \equiv 1 \mod 4, \\
\f_2[\pi, \rho]/(\rho^2, \rho \pi + \pi^2) \quad & \text{ if } \nu \equiv 3 \mod 4, \\
\f_2[\pi,\rho,u]/(\rho^3,u^2,\pi^2,\rho u, \rho \pi, \rho^2 + u \pi) \quad & \text{ if } \nu = 2. 
\end{cases}
\]

\begin{lem}
The following statements hold over $\Spec(\q_\nu)$ for $\nu \equiv 1 \mod 4$:
\begin{enumerate}
\item For all $i \geq 0$, the classes $P^i h_1$, $P^i \tau h_1^2$, $P^i \tau h_1^3$, and $P^i h_0^3 h_3$ are well-defined classes in $Ext_{\q_\nu}$.
\item The classes above are permanent cycles in the $\q_\nu$-motivic Adams spectral sequence.
\item The classes above detect nontrivial classes in $\pi_{**}^{\q_\nu}(S^{0,0})$ which base-change to the classes with the same name in $\pi_{**}^{\overline{\q}_\nu}(S^{0,0})$. 
\end{enumerate}
\end{lem}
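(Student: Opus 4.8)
The plan is to reproduce the proof of Lemma~\ref{Lem:Extq1} with $\f_q$ replaced by $\q_\nu$; the hypothesis $\nu\equiv 1\bmod 4$ plays the role that $q\equiv 1\bmod 4$ played there. The one new structural input is the computation $K^M_*(\q_\nu)/2\cong\f_2[\pi,u]/(\pi^2,u^2)$ recorded above. Since $\nu\equiv 1\bmod 4$ we have $\sqrt{-1}\in\q_\nu$, so $\rho=[-1]=0$ in $K^M_*(\q_\nu)/2$; in particular $\rho$ acts as zero on $K^M_*(\q_\nu)/2$ and $\rho^3=0$ in $\m_2^{\q_\nu}$. Two consequences. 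First, because $\rho^3=0$, the matric periodicity operator of \cite[Sec.~5]{Qui19b} reduces over $\Spec(\q_\nu)$ to the Massey product $P(-)=\langle h_3,h_0^4,-\rangle$ of \cite{Qui19a}, so the symbols $P^i h_1$, $P^i\tau h_1^2$, $P^i\tau h_1^3$, and $P^i h_0^3 h_3$ make sense. Second, in Lemma~\ref{Lem:KW} the outer terms become $(Ext^i_{A^\r}/\rho)\otimes_{\f_2}K^M_*(\q_\nu)/2$ and $\big(Ext^{i+1}_{A^\r}[\rho]\big)\otimes_{\f_2}K^M_*(\q_\nu)/2$; combining this with the $\rho$-Bockstein spectral sequence $Ext_\c[\rho]\Rightarrow Ext_\r$ and the vanishing range of \cite[Prop.~5.4]{Qui19a}, I expect $Ext_{\q_\nu}$ to agree with $Ext_\c\otimes_{\f_2}K^M_*(\q_\nu)/2$ in all tridegrees relevant below --- the exact analogue of the isomorphism $Ext_{\f_q}\cong Ext_\c\otimes E(u)$ used in Lemma~\ref{Lem:Extq1}, now with $K^M_*(\q_\nu)/2=E(\pi,u)$.

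Granting this description of $Ext_{\q_\nu}$, Parts (1)--(3) follow the template of Lemma~\ref{Lem:Extq1} closely. Part (1): the four classes are the images, under the injective map $\phi_{\q_\nu}$, of the like-named classes of $Ext_\c$ tensored with $1$, hence well-defined and nonzero. Part (2): Adams differentials fix the motivic weight and lower the stem by one, so a differential out of any of these classes would land in a group of the form $Ext_\c^{s',f',w'}$, shifted by one of the four internal degrees coming from $1,\pi,u,\pi u$; \cite[Prop.~5.4]{Qui19a} shows every such group vanishes, so the classes are permanent cycles. Part (3): base-change along $\q_\nu\to\overline{\q}_\nu$ kills $\pi$ and $u$ (because $K^M_{>0}(\overline{\q}_\nu)/2=0$) and hence carries $\phi_{\q_\nu}(x\otimes 1)$ to the class named $x$ over $\overline{\q}_\nu$; by Theorem~\ref{Thm:WO}, applied to $\overline{\q}_\nu$ which has characteristic zero, $\pi_{**}^{\overline{\q}_\nu}(S^{0,0})\cong\pi_{**}^\c(S^{0,0})$, so those targets are the (nonzero) classes of those names. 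Since base-change induces a map of Adams spectral sequences and, by Part (2), our classes are permanent cycles, they detect nonzero elements of $\pi_{**}^{\q_\nu}(S^{0,0})$ which base-change to the like-named classes over $\overline{\q}_\nu$.

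The step I expect to be the main obstacle is the $Ext$-computation asserted in the first paragraph --- that is, confirming that the $Tor_1$ correction term in Lemma~\ref{Lem:KW} (the $\rho$-torsion of $Ext_\r$) together with the off-diagonal contributions of $K^M_*(\q_\nu)/2$ neither cancel the classes $P^i h_1$, $P^i\tau h_1^2$, $P^i\tau h_1^3$, $P^i h_0^3 h_3$ nor supply spurious targets for their Adams differentials. One cannot be cavalier here: the Example in the introduction shows that the $\r$-motivic class ``$Ph_1$'' is already obstructed, so the detour through $Ext_\r$ must be tracked carefully (the point being that the $\rho$-torsion, which is exactly what the $Tor_1$ term sees, restores the class once $\rho$ is set to zero). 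As in Lemmas~\ref{Lem:Extq1} and \ref{Lem:Extq3}, the resolution should be the vanishing range of \cite[Prop.~5.4]{Qui19a}, which is strong enough that this bookkeeping goes through in the relevant stems and weights; the remaining ingredients --- convergence of the $\q_\nu$-motivic Adams spectral sequence in the $(2,\eta)$-complete setting, and naturality of $\phi$ and of the $\rho$-Bockstein spectral sequence under base-change --- are routine.
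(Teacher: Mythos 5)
Your proposal is correct and follows essentially the same route as the paper, whose proof is precisely ``the same argument as Lemma~\ref{Lem:Extq1}, the key point being that adjoining one more exterior generator (so internal shifts by $1,\pi,u,\pi u$) does not leave the vanishing range of \cite[Prop.~5.4]{Qui19a}.'' The only cosmetic difference is that the hedged $Ext$-identification you route through Lemma~\ref{Lem:KW} can be obtained directly, as in the proof of Lemma~\ref{Lem:Extq1}: since $\nu\equiv 1\bmod 4$ gives $\rho=0$, the argument of \cite[Prop.~7.1]{WO17} yields $Ext_{\q_\nu}\cong Ext_\c\otimes E(\pi,u)$ outright.
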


\begin{proof}
This follows from the same argument as Lemma \ref{Lem:Extq1}; the key point is that adjoining one more power of $u$ does not carry us out of the isomorphism range in \cite[Prop. 5.4]{Qui19a}. 
\end{proof}

\begin{lem}
The following statements hold over $\Spec(\q_\nu)$ where $\nu = 2$ or $\nu \equiv 3 \mod 4$:
\begin{enumerate}
\item For all $i \geq 0$, the classes $P^i h_1$, $P^i \tau h_1^2$, $P^i \tau h_1^3$, and $P^i h_0^3 h_3$ are permanent cycles in the $\rho$-Bockstein spectral sequence. 
\item There are no classes in higher $\rho$-Bockstein filtration which contribute to the same tridegrees of $Ext_{\q_\nu}$ as the classes above.
\item The classes above are permanent cycles in the $\q_\nu$-motivic Adams spectral sequence.
\item The classes above detect nontrivial classes in $\pi_{**}^{\q_\nu}(S^{0,0})$ which base-change to the classes with the same name in $\pi_{**}^{\overline{\q}_\nu}(S^{0,0})$. 
\end{enumerate}
\end{lem}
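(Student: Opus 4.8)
The plan is to adapt the proof of Lemma~\ref{Lem:Extq3} to the more intricate mod two Milnor K-theory of $\q_\nu$. Recall that for $\nu \equiv 3 \mod 4$ we have $\rho^2 = 0$ in $\m_2^{\q_\nu}$, so the $\rho$-Bockstein spectral sequence has only $d_1$-differentials, while for $\nu = 2$ we have $\rho^3 = 0$ and must additionally track $d_2$-differentials. In both cases the $E_1$-page of the $\rho$-Bockstein spectral sequence is a finite sum of copies of $Ext_\c$, indexed by the $\f_2$-basis $\{1,\pi,\rho,\pi^2\}$ of $K^M_*(\q_\nu)/2$ when $\nu \equiv 3 \mod 4$ and by $\{1,\pi,u,\rho,\rho^2\}$ when $\nu = 2$, each placed in the appropriate internal degree, with differentials generated by $d_1(\tau) = \rho h_0$ and the Leibniz rule; the classes $P^i h_1$, $P^i \tau h_1^2$, $P^i \tau h_1^3$, $P^i h_0^3 h_3$ live in the $\rho$-filtration-zero summand, where they are defined via \cite{Qui19a}.

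\emph{Part (1).} The possible targets of a $\rho$-Bockstein differential on any of the four families are classes of the form $\rho \xi$, or, when $\nu = 2$, also $\rho^2 \xi$, with $\xi$ in a single tridegree of $Ext_\c$ determined by the stem, Adams filtration, and Milnor--Witt degree of the source. Exactly as in Lemma~\ref{Lem:Extq3}, \cite[Prop.~5.4]{Qui19a} shows each such tridegree of $Ext_\c$ vanishes: adjoining the extra generators $\pi$ and $u$ only shifts motivic weight, which does not carry us outside the vanishing region of the $\c$-motivic $Ext$ chart. Hence all four families consist of permanent cycles in the $\rho$-Bockstein spectral sequence.

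\emph{Part (2).} For $P^i h_1$, $P^i \tau h_1^2$, and $P^i h_0^3 h_3$, the absence of higher-$\rho$-Bockstein-filtration classes in the same tridegree is read off from $Ext_\c$ via \cite{Isa14b} and \cite[Prop.~5.4]{Qui19a}, after listing the finitely many additional $E_1$-classes contributed by the $\rho$-, $\pi^2$-, and (for $\nu = 2$) $\rho^2$-summands. The one genuine coincidence is the analog of the one over $\f_q$: the class $\rho P^i h_0 h_2$ can contribute to the tridegree of $P^i \tau h_1^2$, and it is removed by the differential $d_1(P^i \tau h_2) = \rho P^i h_0 h_2$; when $\nu = 2$ the corresponding interlopers in $\rho^2$-filtration are likewise removed by higher differentials obtained from this one by $\rho$-multiplication together with the $d_2$. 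Enumerating all such potential collisions coming from the new generators, and producing a $\rho$-Bockstein differential that kills each, is the main obstacle in the proof; everything else is bookkeeping parallel to the $\f_q$ case.

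\emph{Parts (3) and (4).} Adams differentials preserve motivic weight and lower the stem by one, so \cite[Prop.~5.4]{Qui19a}, applied as in Lemma~\ref{Lem:Extq3}(3), leaves no possible target, and the four families are permanent cycles in the $\q_\nu$-motivic Adams spectral sequence. Finally, base-change along $\q_\nu \to \overline{\q}_\nu$ induces a map of $\rho$-Bockstein and of Adams spectral sequences sending each class to the class of the same name in $Ext_{\overline{\q}_\nu} \cong Ext_\c$, which is a nonzero permanent cycle detecting a nonzero element of $\pi_{**}^{\overline{\q}_\nu}(S^{0,0}) \cong \pi_{**}^\c(S^{0,0})$ by Theorem~\ref{Thm:WO}; combined with Parts~(1)--(3), this forces the $\q_\nu$-motivic classes to detect nonzero elements of $\pi_{**}^{\q_\nu}(S^{0,0})$ that base-change to the classes of the same name over $\overline{\q}_\nu$.
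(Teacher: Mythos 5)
Your proposal is correct and follows essentially the same route as the paper, whose proof is literally ``this follows from the same argument as Lemma \ref{Lem:Extq3}'': you transport the $\f_q$ argument to $\q_\nu$ using the mod two Milnor K-theory basis, the vanishing range of \cite[Prop.~5.4]{Qui19a} for $\rho$-Bockstein and Adams differentials, the $d_1(P^i\tau h_2)=\rho P^i h_0 h_2$ differential to clear the one collision, and base-change to $\overline{\q}_\nu$ for nontriviality. Your write-up is in fact more detailed than the paper's (e.g.\ noting the possible $d_2$'s when $\nu=2$ since $\rho^3=0$ but $\rho^2\neq 0$), and the remaining bookkeeping you defer is exactly what the paper also leaves implicit.
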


\begin{proof}
This follows from the same argument as Lemma \ref{Lem:Extq3}. 
\end{proof}

\begin{defin}
Let $\nu$ be any prime. We define $v^{4i}_1 \eta$, $v^{4i}_1 \tau \eta^2$, $v^{4i}_1 \tau \eta^3$, and $v^{4i}_1 8\sigma$ to be the classes in $\pi_{**}^{\q_\nu}(S^{0,0})$ detected by $P^ih_1$, $P^i \tau h_1^2$, $P^i \tau h_1^3$, and $P^i h_0^3 h_3$, respectively. 
\end{defin}

\begin{thm}\label{Thm:FamiliespAdic}
The classes $v^{4i}_1 \eta$, $v^{4i} \tau \eta^2$, $v^{4i}_1 \tau \eta^3$, and $v^{4i}_1 8\sigma$ in $\pi_{**}^{\q_\nu}(S^{0,0})$ are nonzero. 
\end{thm}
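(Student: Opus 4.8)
The plan is to deduce Theorem~\ref{Thm:FamiliespAdic} directly from the two preceding lemmas, in exact parallel with how Theorem~\ref{Thm:FamiliesFiniteFields} follows from Lemmas~\ref{Lem:Extq1} and~\ref{Lem:Extq3}. First I would split into the three cases according to the congruence class of $\nu$ modulo four, matching the case division in the lemmas: the case $\nu \equiv 1 \bmod 4$ is governed by the first lemma of this subsection, and the cases $\nu = 2$ and $\nu \equiv 3 \bmod 4$ are governed by the second. In each case, part~(3) of the relevant lemma asserts precisely that the classes $P^i h_1$, $P^i \tau h_1^2$, $P^i \tau h_1^3$, and $P^i h_0^3 h_3$ detect nontrivial classes in $\pi_{**}^{\q_\nu}(S^{0,0})$; since the classes named $v^{4i}_1 \eta$, $v^{4i}_1 \tau \eta^2$, $v^{4i}_1 \tau \eta^3$, and $v^{4i}_1 8\sigma$ are \emph{defined} to be the homotopy classes detected by these $Ext$ elements, the nontriviality is immediate.

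The one genuine point requiring care is that ``detected by a nonzero permanent cycle'' should entail ``nonzero in homotopy,'' which is automatic, but also that the $Ext$ classes themselves survive—i.e.\ that they are permanent cycles in the $\q_\nu$-motivic Adams spectral sequence and are not hit by differentials, and (in the $\nu = 2$ and $\nu \equiv 3$ cases) that the $\rho$-Bockstein spectral sequence behaves as claimed. All of this is already packaged into parts~(1)--(3) of the two lemmas, whose proofs reduce to the corresponding $\c$-motivic input via \cite[Prop.~5.4]{Qui19a} together with the computation of $K^M_*(\q_\nu)/2$ recalled above. So the proof is essentially a citation: invoke the appropriate lemma in each of the three cases and read off part~(3).

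I would write it as follows.

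\begin{proof}
Each class in the statement is, by definition, detected in the $\q_\nu$-motivic Adams spectral sequence by the corresponding class among $P^i h_1$, $P^i \tau h_1^2$, $P^i \tau h_1^3$, and $P^i h_0^3 h_3$ in $Ext_{\q_\nu}$. When $\nu \equiv 1 \bmod 4$, the first lemma of this subsection shows that these $Ext$ classes are well-defined permanent cycles detecting nontrivial homotopy classes. When $\nu = 2$ or $\nu \equiv 3 \bmod 4$, the second lemma of this subsection shows the same, using that the relevant classes are permanent cycles in the $\rho$-Bockstein spectral sequence with no contributions from higher $\rho$-Bockstein filtration in their tridegrees, hence survive to detect nontrivial classes in $\pi_{**}^{\q_\nu}(S^{0,0})$. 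In all cases the homotopy classes $v^{4i}_1 \eta$, $v^{4i}_1 \tau \eta^2$, $v^{4i}_1 \tau \eta^3$, and $v^{4i}_1 8\sigma$ are therefore nonzero.
\end{proof}

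The main obstacle, such as it is, lies not in this theorem but upstream: confirming that adjoining the extra polynomial generators $\pi, u$ (and, for $\nu = 2$, the more complicated relations among $\rho, \pi, u$) in $K^M_*(\q_\nu)/2$ does not introduce new $Ext$ classes or differentials in the relevant tridegrees—this is exactly the ``key point'' flagged in the proof of the first lemma, and it is handled there by staying within the isomorphism range of \cite[Prop.~5.4]{Qui19a}. Given those lemmas, the theorem itself is formal.
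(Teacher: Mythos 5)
Your proposal is correct and takes essentially the same route as the paper: the paper's own proof likewise rests on the classes $P^i h_1$, $P^i \tau h_1^2$, $P^i \tau h_1^3$, $P^i h_0^3 h_3$ being permanent cycles in the $\q_\nu$-motivic Adams spectral sequence (via the two preceding lemmas, i.e.\ the $\f_q$-style argument) and on base-change along $\q_\nu \to \overline{\q}_\nu$ to conclude they detect nonzero homotopy classes. The only cosmetic difference is that the paper re-states the base-change nontriviality argument in the theorem's proof, whereas you cite it directly from part~(3) of the lemmas; the content is identical.
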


\begin{proof}
A slight modification of the proof from the case $F = \f_q$ implies that the classes $P^i h_1$, $P^i \tau h_1^2$, $P^i \tau h_1^3$, and $P^i h_0^3 h_3$, $i \geq 0$, are permanent cycles in the $\q_\nu$-motivic Adams spectral sequence. The images of these classes under base-change along $f : \q_\nu \to \overline{\q}_\nu$ detect nonzero classes in the $\overline{\q}_\nu$-motivic Adams spectral sequence. Therefore they must detect nonzero classes in the ${\q_\nu}$-motivic Adams spectral sequence. 
\end{proof}

\begin{prop}\label{Prop:16NullpAdic}
Let $L_m$ be the subcomplex of $\underline{L}^\infty_{-\infty}$ with cells in dimensions $-5+8m \leq d \leq 2 + 8m$. Over $\q_\nu$, the degree $16$ map is null on $L_m$ for all $m \in \z$. 
\end{prop}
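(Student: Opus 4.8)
The plan is to follow the template of Proposition~\ref{Prop:16NullFinite}, replacing $\f_q$ by $\q_\nu$ throughout and invoking base-change along $f : \q_\nu \to \overline{\q}_\nu$ together with the $\c$-motivic input $\pi_{**}^{\overline{\q}_\nu}(S^{0,0}) \cong \pi_{**}^{\c}(S^{0,0})$ (valid in the relevant range by \cite{WO17}, since $\overline{\q}_\nu$ has characteristic zero). As before, it suffices to treat $m = 0$ and obtain the general case by $8$-fold suspension. Set $X := L_0 \wedge D^{\q_\nu} L_0$ where $D^{\q_\nu}(-) = F(-, S^{0,0})$ is the $\q_\nu$-motivic Spanier--Whitehead dual. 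The attaching-map structure of $X$, hence the action of $Sq^i$ on $H^{**}(X)$, is independent of the base field (it is governed by the cell diagram of \cite[Fig.~1]{Qui19a} and the Cartan formula together with $\chi Sq^1 = Sq^1$), so the $d_1$- and $d_8$-differentials in the Atiyah--Hirzebruch spectral sequence filtering $X$ by topological dimension are literally the same as over $\f_q$. By the argument of \cite[Prop.~5.11]{Qui19b}, the class detecting $16$ in $\pi_{0,0}^{\q_\nu}(X)$ must base-change to a $\tau$-torsion class in $\pi_{**}^{\overline{\q}_\nu}(S^{0,0})$, and \cite[Prop.~5.11]{Qui19a} shows no such class survives over $\overline{\q}_\nu$.

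First I would enumerate the possible contributions to $\pi_{0,0}^{\q_\nu}(X)$ in the Atiyah--Hirzebruch spectral sequence: these are classes $\alpha[2k \pm \epsilon, k \pm \epsilon]$ with $\alpha \in \pi_{-2k \mp \epsilon, -k \mp \epsilon}^{\q_\nu}(S^{0,0})$, $-3 \leq k \leq 3$, $\epsilon \in \{0,1\}$. I would then discard (i) all $\alpha$ that base-change to zero in $\pi_{**}^{\overline{\q}_\nu}(S^{0,0})$ and (ii) all $\alpha$ already handled in \cite[Prop.~5.11]{Qui19b}, which is legitimate since the attaching maps are base-field independent. What remains is a finite list of classes in $\pi_{**}^{\q_\nu}(S^{0,0})$ built from the mod-two Milnor $K$-theory computation recalled above: in the split case $\nu \equiv 1 \bmod 4$ one gets classes involving $\pi$ and $u$ (the analogues of the $uh_3h_0^i$, $uh_2h_0$, $uh_0^j$ appearing in the $\f_q$ proof, plus their $\pi$-multiples), while for $\nu \equiv 3 \bmod 4$ and $\nu = 2$ the relations $\rho^2 = 0$ or $\rho^3 = 0$ force the $\rho$-Bockstein spectral sequence to degenerate quickly and leave at most a handful of extra classes. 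For each such $\alpha$ I would exhibit it as the source or target of a $d_1$- or $d_8$-differential using exactly the $Sq^1$- and $Sq^8$-computations carried out in Proposition~\ref{Prop:16NullFinite}; since the relevant homology classes $e_i \otimes f_j$ and their Steenrod operations do not change, these verifications are mechanical once the bookkeeping of which $K^M_*(\q_\nu)/2$-multiples land in which tridegree is in place.

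The main obstacle is the bookkeeping in the $\nu = 2$ case, where $K^M_*(\q_2)/2 \cong \f_2[\pi,\rho,u]/(\rho^3, u^2, \pi^2, \rho u, \rho\pi, \rho^2 + u\pi)$ is the largest of the three coefficient rings and contributes the most potential new classes; one must check that every class of the form $(\text{generator})\cdot(\text{monomial in }\pi,\rho,u)$ landing in a dangerous tridegree either base-changes to zero, or is hit by / supports one of the known differentials, and that the relation $\rho^2 + u\pi = 0$ does not conspire to produce a surviving class. I expect this to go through for the same reason it does over $\f_q$: the ambient $\overline{\q}_2$-motivic (equivalently $\c$-motivic) homotopy is concentrated away from the relevant tridegrees by \cite[Prop.~5.4]{Qui19a}, so any surviving class would have to be $\tau$-torsion, and the $Sq^1$/$Sq^8$ analysis kills all of these. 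Having ruled out every candidate, $\pi_{0,0}^{\q_\nu}(X) = 0$ in the degree detecting $16$, so the degree $16$ map is null on $L_m$ for all $m$.
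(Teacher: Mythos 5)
Your overall template is the same as the paper's: pass to $X := L_m \wedge D^{\q_\nu} L_m$, use the argument of \cite[Prop.~5.11]{Qui19b} together with base-change along $\q_\nu \to \overline{\q}_\nu$ to see that any class detecting $16$ must base-change to a $\tau$-torsion (in fact zero) class, enumerate the Atiyah--Hirzebruch contributions $\alpha[2k\pm\epsilon,k\pm\epsilon]$ with $-3\leq k\leq 3$, and discard everything already handled over $\f_q$ or $\r$. The gap is in your claim that the remaining classes can be eliminated by ``exactly the $Sq^1$- and $Sq^8$-computations'' of Proposition~\ref{Prop:16NullFinite}, i.e.\ by $d_1$- and $d_8$-differentials alone, and in your guess that the new classes are just the $\f_q$-classes and their $\pi$-multiples. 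Because $K^M_*(\q_\nu)/2$ contains a nonzero product of two generators ($\pi u$ for $\nu\equiv 1 \bmod 4$, $\pi\rho$ for $\nu\equiv 3 \bmod 4$, $\rho^2$ for $\nu=2$), the weight can be shifted by two, and this puts genuinely new elements into the relevant bidegrees that have no counterpart over $\f_q$: in the paper's proof these are $\pi u\,\tau h_3 h_1^2$ and $\pi u\,\tau c_0 h_1$ (and their $\pi\rho$- and $\rho^2$-analogues), alongside $\pi u\, h_3 h_0^i$ ($1\leq i\leq 3$) and $\pi u\, h_2 h_0$. The latter are indeed $d_1$-targets, but the two $\tau$-containing families are killed by $d_2$-differentials, i.e.\ by attaching maps detected by $Sq^2$ rather than $Sq^1$ or $Sq^8$. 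A run of your plan as written, restricted to the $Sq^1$/$Sq^8$ analysis, would leave those classes unaccounted for and the proof incomplete.

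So the fix is modest but necessary: after the enumeration, you must extend the attaching-map analysis of $X$ to include the $Sq^2$-action (which, as you note, is base-field independent and computable from \cite[Fig.~1]{Qui19a} via the Cartan formula and duality), and use the resulting $d_2$-differentials to dispose of $\pi u\,\tau h_3 h_1^2$ and $\pi u\,\tau c_0 h_1$. One further small correction of emphasis: the three cases $\nu\equiv 1$, $\nu\equiv 3$, $\nu=2$ do not require separate bookkeeping campaigns; the paper handles the latter two by literally substituting $\pi\rho$, respectively $\rho^2$, for $\pi u$ in the $\nu\equiv 1$ list, so the $\nu=2$ case is no harder than the split case despite the larger coefficient ring.
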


\begin{proof}
The reductions from the proof of Proposition \ref{Prop:16NullFinite} carry over \emph{mutatis mutandis}. Let $X := L_m \wedge D^{\q_\nu} L_m$. We split the analysis into three cases.

When $\nu \equiv 1 \mod 4$, the classes
$$\alpha \in \{\pi u \tau h_3 h_1^2; \pi u \tau c_0 h_1; \pi u h_3 h_0^i, 1 \leq i \leq 3; \pi u h_2 h_0  \}$$
lie in the correct bidegrees of $\pi_{**}^{\q_\nu}(S^{0,0})$, base-change to $\tau$-torsion classes (actually to zero) in $\pi_{**}^{\overline{\q}_\nu}(S^{0,0})$, and have not been considered in the proof of \cite[Prop. 5.11]{Qui19b}.

\begin{enumerate}
\item The classes $\pi u \tau h_3 h_1^2$ and $\pi u \tau c_0 h_1$ are the targets of $d_2$-differentials.
\item The classes $ \pi u h_3 h_0^i$, $1 \leq i \leq 3$, are the targets of $d_1$-differentials.
\item The class $\pi u h_2 h_0$ is the target of a $d_1$-differential. 
\end{enumerate}

When $\nu \equiv 3 \mod 4$ (resp. $\nu = 2$), the only new classes are the ones appearing in the case where $\nu \equiv 1 \mod 4$, with $\pi u$ replaced by $\pi \rho$ (resp. $\pi u$ replaced by $\rho^2$). The same argument carries through to eliminate these classes.

We have ruled out any classes which could detect $16$ in $\pi_{0,0}^{\q_\nu}(X)$, so it must be zero. 
\end{proof}

\subsection{$v_1$-periodic families over the rationals}\label{Section:Rationals}
In this section, we construct the infinite families $v^{4i}_1 \tau \eta^2$ and $v^{4i}_1 \tau \eta^3$ in $\pi_{**}^\q(S^{0,0})$. We also study the $\q$-motivic homotopy of $L_m$. In both cases, we employ previous calculations along with the motivic Hasse principle developed in \cite[Sec. 4-5]{OO13}. 

The construction of the $\r$-motivic May spectral sequence \cite[Sec. 4.3]{Qui19b} may be modified by replacing $\m_2^\r$ by $\m_2^\q$ and $(A^\r)^\vee$ by $(A^\q)^\vee$ to obtain the $\q$-motivic May spectral sequence. The discussion in \cite[Sec. 5.1]{Qui19b} carries over to define a $\q$-motivic periodicity operator 
$$P_\q(x) := \left\langle \begin{bmatrix} h_3 & \rho^3 h_1^2 \end{bmatrix}, \begin{bmatrix} h_0^4 \\  c_0 \end{bmatrix}, x \right\rangle$$
on the elements $x \in Ext^{***}_{A^\q}(\m_2^\q, \m_2^\q)$ which are both $h_0^4$ and $c_0$-torsion. 

\begin{lem}
The following statements hold over $\Spec(\q)$:
\begin{enumerate}
\item For all $i \geq 0$, the classes $P^i_\q \tau h_1^2$ and $P^i_\q \tau h_1^3$ are nontrivial in $Ext^{***}_{A^\q}(\m_2^\q,\m_2^\q)$.  
\item The classes above are permanent cycles in the $\q$-motivic Adams spectral sequence.
\item The classes above detect nontrivial classes in $\pi_{**}^{\q}(S^{0,0})$ which base-change to the classes with the same name in $\pi_{**}^{\overline{\q}}(S^{0,0})$. 
\end{enumerate}
\end{lem}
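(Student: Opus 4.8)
The plan is to run the argument in parallel with the proofs of Lemmas \ref{Lem:Extq1} and \ref{Lem:Extq3} and their $\q_\nu$-analogues, replacing the $\r$- and $\q_\nu$-motivic May and Adams spectral sequences by their $\q$-motivic counterparts and using base-change along $\q \to \r$ and $\q \to \overline{\q}$ together with the motivic Hasse principle of \cite[Sec. 4--5]{OO13}. For Part (1), I would first check that $\tau h_1^2$ and $\tau h_1^3$ are defined in $Ext^{***}_{A^\q}(\m_2^\q,\m_2^\q)$ and are simultaneously $h_0^4$- and $c_0$-torsion, so that the $\q$-motivic periodicity operator $P_\q$ of \cite[Sec. 5.1]{Qui19b} applies; these facts, together with the $h_0^4$- and $c_0$-torsion property of each iterate $P^i_\q \tau h_1^j$, can be read off from the description of $Ext^{***}_{A^\q}$ furnished by Lemma \ref{Lem:KW} in terms of $Ext^{***}_{A^\r}$ and $K^M_*(\q)/2$, using the isomorphism range of \cite[Prop. 5.4]{Qui19a} and \cite{Isa14b} to control the relevant tridegrees over $\r$. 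The operator $P_\q$ is given by exactly the same matric Massey product as the $\r$-motivic operator $P$, and base-change $\q \to \r$ carries the $\q$-motivic May spectral sequence to the $\r$-motivic one compatibly with matric Massey products; hence it sends $P^i_\q \tau h_1^j$ to $P^i \tau h_1^j \in Ext^{***}_{A^\r}$. Since the latter classes are nontrivial for all $i \ge 0$ and $j = 2,3$ by \cite{Qui19b}, their preimages $P^i_\q \tau h_1^j$ are nontrivial.

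For Part (2), I would argue by degree reasons that there are no possible targets for Adams differentials on $P^i_\q \tau h_1^2$ or $P^i_\q \tau h_1^3$: Adams differentials preserve motivic weight and lower the stem by one, and the description of $Ext^{***}_{A^\q}$ from Lemma \ref{Lem:KW} combined with \cite[Prop. 5.4]{Qui19a} shows that the tridegrees into which such a differential could land are empty. (As a cross-check, one can instead invoke the motivic Hasse principle and the fact that these classes are permanent cycles over $\Spec(\r)$ — by \cite{Qui19b} — and over every $\Spec(\q_\nu)$, by the preceding two lemmas of Section \ref{Section:pAdicRationals}.) For Part (3), base-change along $\q \to \overline{\q}$ sends $P^i_\q \tau h_1^j$ to the class of the same name in $Ext^{***}_{A^{\overline{\q}}}$, and under the isomorphism $\pi_{**}^{\overline{\q}}(S^{0,0}) \cong \pi_{**}^\c(S^{0,0})$ of Theorem \ref{Thm:WO} this detects the nonzero class $v^{4i}_1 \tau \eta^j$. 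Because base-change induces a map of Adams spectral sequences and, by Part (2), $P^i_\q \tau h_1^j$ is a permanent cycle, it must detect a nonzero element of $\pi_{**}^\q(S^{0,0})$, which by construction base-changes to $v^{4i}_1 \tau \eta^j$ over $\overline{\q}$.

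I expect the main obstacle to be the well-definedness portion of Part (1): ensuring that the iterated operator $P^i_\q$ is actually defined at every stage over $\Spec(\q)$ — that each $P^i_\q \tau h_1^j$ is again $h_0^4$- and $c_0$-torsion, and that the Massey products carry no fatal indeterminacy — in a setting where $K^M_*(\q)/2$ is considerably larger than $K^M_*(\f_q)/2$ or $K^M_*(\q_\nu)/2$ and one has correspondingly weaker explicit control of $Ext^{***}_{A^\q}$. The resolution is that each of these structural properties can be verified after base-change to $\Spec(\r)$ and to the various $\Spec(\q_\nu)$, where it is already established, and then reassembled via the motivic Hasse principle and the injection $\phi_\q$ of Lemma \ref{Lem:KW}; the per-prime computations of Sections \ref{Section:FiniteFields} and \ref{Section:pAdicRationals} together with the $\r$-motivic input of \cite{Qui19b} are precisely what make this reassembly possible.
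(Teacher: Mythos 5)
Your overall strategy is the paper's: establish the statements over every completion of $\q$ ($\r$ via \cite{Qui19b}, $\q_\nu$ via Section \ref{Section:pAdicRationals}) and then descend to $\q$ using base-change and the motivic Hasse principle of \cite[Sec. 4-5]{OO13}, with Lemma \ref{Lem:KW} and Theorem \ref{Thm:WO} supplying the structural input. Two comments on where your emphasis differs. For Part (1) you use only the real place: base-change $\q \to \r$ carries the matric Massey product $P^i_\q \tau h_1^j$ into $P^i \tau h_1^j \in Ext_{A^\r}$, which is nonzero by \cite{Qui19b}, so the $\q$-class is nonzero. That is a legitimate (and slightly leaner) variant of the paper's argument, which instead uses injectivity of $K^M_*(\q)/2 \to \prod_\nu K^M_*(\q_\nu)/2$ on May $E_1$-pages to compare with \emph{all} completions at once; for bare nontriviality one place where the image is nonzero indeed suffices, provided you keep track of the Massey-product indeterminacy as you note.

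The weak point is your primary argument for Part (2). Over $\f_q$ and $\q_\nu$ the "no possible targets" argument works because $K^M_*$ there is tiny (one or two exterior generators), so adjoining it keeps you inside the isomorphism range of \cite[Prop. 5.4]{Qui19a}. Over $\q$, however, $K^M_1(\q)/2$ and $K^M_2(\q)/2$ are infinite-dimensional and $\rho^n \neq 0$ for all $n$, so the extension of Lemma \ref{Lem:KW} contributes classes $x \otimes \kappa$ with $\kappa$ of arbitrary Milnor K-theory degree; the target tridegree of a potential Adams differential then receives contributions from $Ext_{A^\r}$ in stems and weights shifted by $n$ for a whole range of $n$, and their vanishing cannot simply be read off from Lemma \ref{Lem:KW} together with \cite[Prop. 5.4]{Qui19a}. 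This is precisely why the paper does not argue by degree reasons directly over $\q$: it uses the fact that the Hasse map induces a map (injective in the relevant tridegrees) of motivic Adams spectral sequences, so permanence over $\r$ and over every $\q_\nu$ forces permanence over $\q$. Your parenthetical "cross-check" is therefore not optional — it should be the main argument for Part (2). For Part (3) your base-change-to-$\overline{\q}$ detection argument matches the style of Lemmas \ref{Lem:Extq1} and \ref{Lem:Extq3}; the paper phrases this as "quotienting by $\rho$," which is exactly the observation you are implicitly using that over $\overline{\q}$ one has $\rho = 0$ and the matric bracket collapses to $\langle h_3, h_0^4, - \rangle$, identifying the image with the class of the same name.
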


\begin{proof}
The analogous statements hold over every completion $\q_\nu$ of $\q$:
\begin{itemize}
\item Over $\r$, the statements follow from \cite{Qui19b}.
\item Over $\q_\nu$ with $\nu$ a prime, the statements follow from Section \ref{Section:pAdicRationals}. The identification of the relevant elements as (matric) Massey products in $Ext^{***}_{A^{\q_\nu}}(\m_2^{\q_\nu},\m_2^{\q_\nu})$ follows from the construction of $\phi_F$ in the proof of \cite[Lem. 4.1]{KW18}. 
\end{itemize}

The map $K^M_*(\q)/2 \to \prod_\nu K^M_*(\q_\nu)/2$ induces an injective map between the $E_1$-pages of the motivic May spectral sequences converging to $Ext^{***}_{A^\q}$ and $\prod_\nu Ext^{***}_{A^{\q_\nu}}$. Then $(1)$ is clear since the same results hold over every completion of $\q$. The motivic Hasse map also induces a map of motivic Adams spectral sequences, so $(2)$ follows similarly. Finally, $(3)$ follows from quotienting by $\rho$ as in \cite[Sec. 5.1]{Qui19b}. 
\end{proof}

\begin{defin}
We define $v^{4i}_1 \tau \eta^2$ and $v^{4i}_1 \tau \eta^3$ to be the classes in $\pi_{**}^\q(S^{0,0})$ detected by $P^i \tau h_1^2$ and $P^i \tau h_1^3$, respectively. 
\end{defin}

\begin{thm}
The classes $v^{4i}_1 \tau \eta^2$ and $v^{4i}_1 \tau \eta^3$ are nonzero in $\pi_{**}^\q(S^{0,0})$. 
\end{thm}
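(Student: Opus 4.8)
The plan is to reduce the statement to the corresponding nonvanishing result for the detecting classes $P^i_\q \tau h_1^2$ and $P^i_\q \tau h_1^3$ in $Ext^{***}_{A^\q}(\m_2^\q,\m_2^\q)$, which is exactly what the preceding lemma provides. First I would recall that, by the definition immediately above the theorem, $v^{4i}_1 \tau \eta^j$ (for $j=2,3$) is by construction a class in $\pi_{**}^\q(S^{0,0})$ detected by $P^i \tau h_1^j$ in the $\q$-motivic Adams spectral sequence. By part (1) of the preceding lemma these classes are nonzero on the $E_2$-page, and by part (2) they are permanent cycles, so they survive to the $E_\infty$-page. A priori a nonzero permanent cycle could still detect the zero homotopy class if it is hit by a differential or if there is a class in lower Adams filtration that the group quotients it into; but part (2) (permanent cycle) rules out the former and there is nothing in lower filtration in the relevant tridegree, so the class they detect in $\pi_{**}^\q(S^{0,0})$ is genuinely nonzero.

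Concretely, the argument is: apply base-change $f : \q \to \overline\q$ and invoke part (3) of the preceding lemma, which says $v^{4i}_1\tau\eta^j$ base-changes to the class of the same name in $\pi_{**}^{\overline\q}(S^{0,0})$. By Theorem~\ref{Thm:WO} (and the surrounding discussion) the $\overline\q$-motivic class $v^{4i}_1\tau\eta^j$ corresponds to the $\c$-motivic class of the same name, which is nonzero by the computations recalled from \cite{Qui19a}. Since base-change is a ring homomorphism (in particular sends $0$ to $0$), a class whose image under $f^*$ is nonzero must itself be nonzero. Hence $v^{4i}_1\tau\eta^2$ and $v^{4i}_1\tau\eta^3$ are nonzero in $\pi_{**}^\q(S^{0,0})$ for all $i \geq 0$.

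There is essentially no obstacle here: the theorem is a packaging statement whose content has already been established in the preceding lemma (itself proved via the motivic Hasse principle, comparing with $\pi_{**}^\r$ and the $\pi_{**}^{\q_\nu}$ from Section~\ref{Section:pAdicRationals}). The only point requiring a sentence of care is the passage from ``$P^i\tau h_1^j$ detects a nonzero class'' to ``$v^{4i}_1\tau\eta^j \neq 0$'', i.e.\ confirming that the homotopy group in that tridegree is not killed by the filtration — but this is immediate from the permanent-cycle statement together with the base-change comparison, since a nonzero image forces a nonzero source. I would therefore keep the proof short, writing essentially: ``This is immediate from the preceding lemma: the classes $P^i\tau h_1^2$ and $P^i\tau h_1^3$ are nonzero permanent cycles in the $\q$-motivic Adams spectral sequence detecting $v^{4i}_1\tau\eta^2$ and $v^{4i}_1\tau\eta^3$, and these base-change to the corresponding nonzero classes over $\overline\q$, hence are nonzero.''
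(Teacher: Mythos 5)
Your proposal is correct and matches the paper, which gives no separate proof precisely because the theorem is a direct repackaging of part (3) of the preceding lemma: the classes $P^i_\q \tau h_1^2$ and $P^i_\q \tau h_1^3$ are nonzero permanent cycles detecting classes that base-change to the nonzero classes of the same name over $\overline{\q}$, so the detected classes are nonzero. Your added remark that a nonzero image under base-change forces a nonzero source is exactly the intended (and sufficient) justification.
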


\begin{prop}\label{Prop:16NullRationals}
Let $L_m$ be the subcomplex of $\underline{L}^\infty_{-\infty}$ with cells in dimensions $-5+8m \leq d \leq -2 + 8m$. Over $\q$, the degree $16$ map is null on $L_m$ for all $m \in \z$. 
\end{prop}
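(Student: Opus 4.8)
The plan is to reduce the statement over $\q$ to the statements already proven over $\f_q$ and over $\q_\nu$ for all primes $\nu$, via the motivic Hasse principle of Ormsby--{\O}stv{\ae}r \cite{OO13}. Concretely, I would argue as follows. Since $L_m$ is a finite complex whose attaching-map structure is defined over the prime field and is stable under base-change, the degree $16$ self-map being null on $L_m$ over $\q$ is equivalent to the vanishing of a specific class in $\pi_{0,0}^\q(X)$, where $X := L_m \wedge D^\q L_m$ and $D^\q$ is $\q$-motivic Spanier--Whitehead duality, exactly as in the proofs of Propositions \ref{Prop:16NullFinite} and \ref{Prop:16NullpAdic}. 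So it suffices to show this class vanishes in $\pi_{0,0}^\q(X)$.

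First I would set up the Atiyah--Hirzebruch spectral sequence for $\pi_{**}^\q(X)$ coming from the filtration of $X$ by topological dimension, noting that by finiteness of $L_m$ the relevant portion involves only $\pi_{-2k\mp\epsilon, -k\mp\epsilon}^\q(S^{0,0})$ for $-3 \leq k \leq 3$ and $\epsilon \in \{0,1\}$. Next I would invoke the motivic Hasse principle: the map
\[
\pi_{**}^\q(S^{0,0}) \to \pi_{**}^\r(S^{0,0}) \times \prod_\nu \pi_{**}^{\q_\nu}(S^{0,0})
\]
is injective in the relevant range (after $(2,\eta)$-completion), so it is enough to check that the candidate detecting class for $16$ dies over $\r$ and over each $\q_\nu$. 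Over $\r$ this is \cite[Prop. 5.11]{Qui19b}, and over each $\q_\nu$ (including $\nu = 2$) this is Proposition \ref{Prop:16NullpAdic}. Since the AHSS and the duality functor are all compatible with the base-change maps $\q \to \r$ and $\q \to \q_\nu$, a class in $\pi_{0,0}^\q(X)$ detecting $16$ would have to detect $16$ in every $\pi_{0,0}^{\q_\nu}(X)$ and in $\pi_{0,0}^\r(X)$; all of those groups have been shown to contain no such class, so by injectivity the class over $\q$ vanishes.

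The one subtlety I would have to be careful about is that the motivic Hasse principle in \cite{OO13} is stated for motivic cohomology / the relevant spectral sequence input rather than directly for homotopy of a finite spectrum like $X$, so I would phrase the argument at the level of Adams $E_2$-pages (or, equivalently, at the level of the $\rho$-Bockstein / May input, as in the proof of the earlier lemma over $\q$): the injection $K^M_*(\q)/2 \to \prod_\nu K^M_*(\q_\nu)/2$ induces an injection of the Adams $E_2$-pages computing $\pi_{**}^\q(X)$ into the product of those computing $\pi_{**}^{\q_\nu}(X)$, and since the relevant tridegrees support no Adams differentials (by the same degree reasons used in Proposition \ref{Prop:16NullFinite}), this injectivity passes to homotopy in the range we need.

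The main obstacle is bookkeeping rather than conceptual: I must verify that no ``new'' classes appear in $\pi_{**}^\q(X)$ that were not already present over some $\q_\nu$ and killed there. By the Hasse injectivity every class in the relevant range of $\pi_{**}^\q(S^{0,0})$ maps nontrivially into the product over completions, so any potential detecting class for $16$ over $\q$ is pinned down by its images; the work is to confirm the AHSS differentials over $\q$ that kill these classes are exactly the base-field-independent ones (the $d_1$'s detected by $Sq^1$ and the higher differentials detected by $Sq^2$, $Sq^4$, $Sq^8$ acting on $H^{**}(X)$), which follows since the cell structure of $L_m$ and the $Sq^i$-action on $H^{**}(\underline{L}^\infty_{-N})$ for $i \leq 4$ are independent of the base field. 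Granting this, the degree $16$ map is null on $L_m$ over $\q$ for all $m \in \z$.
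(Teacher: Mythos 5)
Your proposal is correct and follows essentially the same route as the paper's proof: reduce to the known nullity statements over $\r$ and over each $\q_\nu$ (from \cite[Prop. 5.11]{Qui19b} and Proposition \ref{Prop:16NullpAdic}), then use the motivic Hasse map of \cite{OO13} to get injectivity of the $\q$-motivic Adams $E_2$-page into the product over completions in the relevant tridegrees, which, since there are no differentials in that range, passes to injectivity on homotopy groups and forces the detecting class for $16$ to vanish over $\q$. Your extra discussion of the Atiyah--Hirzebruch setup for $X = L_m \wedge D^\q L_m$ is consistent with how the earlier propositions are proved and adds detail the paper leaves implicit, but it is not a different argument.
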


\begin{proof}
The analogous results hold over every completion $\q_\nu$ of $\q$:
\begin{enumerate}
\item Over $\r$, this follows from \cite[Sec. 5.2]{Qui19b}.
\item Over $\q_\nu$ with $\nu$ a prime, this follows from Section \ref{Section:pAdicRationals}. 
\end{enumerate}

The result then follows from base-change and the motivic Hasse map. In particular, we see that the induced map from the $E_2$-page of the $\q$-motivic Adams spectral sequenceto the product over all completions of the $E_2$-page of the $\q_\nu$-motivic Adams spectral sequence is injective in the relevant tridegrees. Since there are no differentials in this range (by base-change and the analogous statements over $\Spec(\r)$ and $\Spec(\q_\nu)$), the map in homotopy groups is injective in the relevant bidegrees. The result follows. 
\end{proof}

\section{Motivic Mahowald invariant of $(2+\rho\eta)^i$}\label{Section:General}
We now apply the computations from Section \ref{Section:Prime} to compute $M^F((2+\rho\eta)^i)$ for all $i \geq 0$ if $\chara(F) > 2$ and for all $i \equiv 2,3 \mod 4$ if $\chara(F) = 0$. 

\subsection{Prime field Mahowald invariants}\label{Section:PrimeMI}
We begin by computing over prime fields. That is, we compute the $\f_q$-Mahowald invariant of $2^i$, $i \geq 1$, where $q > 2$ is prime, and the $\q$-Mahowald invariant of $(2+\rho \eta)^i$, $i \equiv 2,3\mod 4$. 

\begin{thm}\label{Thm:InitialMotMI2i}
Let $i \geq 1$. The $\f_q$-Mahowald invariant of $2^i$ is given by
\[ M^{\f_q}(2^{4i+j}) \ni \begin{cases}
v^{4i}_1 8\sigma \quad & j=0,\\
v^{4i}_1\eta &j=1,\\
v^{4i}_1 \tau \eta^2 &j=2,\\
v^{4i}_1 \tau \eta^3 &j=3.
\end{cases}
\]
\end{thm}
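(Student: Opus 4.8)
The plan is to adapt the proof of the analogous $\mathbb{R}$-motivic statement \cite[Thm.~2.17]{MR93} (as modified in \cite[Sec.~5]{Qui19b}) to the field $\mathbb{F}_q$, using exactly the input assembled in Section~\ref{Section:Prime}. The overall strategy is to set up the defining diagram for the $\mathbb{F}_q$-motivic Mahowald invariant of $2^{4i+j}$, identify the relevant skeleton of $\underline{L}^\infty_{-\infty}$ through which the attaching map factors, and then read off the bottom-cell class. First I would recall that $2^{4i} = (2)^{4i}$ acts on the mod-two Moore spectrum $S^{0,0}/2$ compatibly with the $v_1^4$-self map, and that the $v_1$-periodic families $v_1^{4i}\eta$, $v_1^{4i}\tau\eta^2$, $v_1^{4i}\tau\eta^3$, $v_1^{4i}8\sigma$ in $\pi_{**}^{\mathbb{F}_q}(S^{0,0})$ constructed in Theorem~\ref{Thm:FamiliesFiniteFields} are nonzero and detected by $P^i h_1$, $P^i\tau h_1^2$, $P^i\tau h_1^3$, $P^i h_0^3 h_3$ respectively. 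Working one congruence class $j \in \{0,1,2,3\}$ at a time, I would produce a nontrivial composite
\[
S^{8i+j+\epsilon,4i+\epsilon'} \xrightarrow{\;(2)^{4i+j}\;} S^{0,0} \simeq \Sigma^{1,0}\underline{L}^\infty_{-\infty} \to \Sigma^{1,0}\underline{L}^\infty_{-N}
\]
and show $N$ is exactly large enough that the bottom cell of the target is $S^{-8i+1-?,-4i-?}$, so that the induced class on that cell is the asserted $v_1^{4i}$-family element.

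The key steps, in order, are: (1) use the $J$-homomorphism / image-of-$J$ picture over $\mathbb{F}_q$ — or more precisely the computation of $\pi_{**}^{\mathbb{F}_q}(S^{0,0})$ in the relevant range from \cite[Sec.~7.3-7.4]{WO17} and Theorem~\ref{Thm:FamiliesFiniteFields} — to show that the lift $\widetilde{\eta^j} : S^{j} \to S^{0,0}/2$ of $\eta^j$ (and its $v_1^{4i}$-translate) composes nontrivially with the projection $S^{0,0}/2 \to \Sigma^{1,0}\underline{L}^\infty_{-N}$ onto the appropriate skeleton; (2) invoke Proposition~\ref{Prop:16NullFinite}, that the degree-$16$ map is null on the subcomplex $L_m$ of $\underline{L}^\infty_{-\infty}$ with cells in dimensions $-5+8m \le d \le 2+8m$, to force that the composite with $2^{4i+j}$ dies once we pass to a low enough skeleton — this is what pins down the minimality of $N$ and hence the bottom cell; (3) compare to the $\mathbb{C}$-motivic computation \cite[Thm.~5.12]{Qui19a} via base-change along $\mathbb{F}_q \to \overline{\mathbb{F}}_q$ and the isomorphism $\pi_{**}^{\overline{\mathbb{F}}_q}(S^{0,0}) \cong \pi_{**}^{\mathbb{C}}(S^{0,0})$ of Theorem~\ref{Thm:WO}, together with Lemma~\ref{Lem:ProjBaseChange} that $\underline{L}^\infty_{-N}$ is preserved under base-change, so that the identification of the bottom-cell class as $v_1^{4i}\eta^j$ (resp. $v_1^{4i}8\sigma$) in the algebraic closure upgrades to the same identification over $\mathbb{F}_q$; (4) finally apply Lemma~\ref{Lem:Squeeze} in the form: since $M^{\overline{\mathbb{F}}_q}(2^{4i+j})$ contains the relevant family and $|M^{\mathbb{F}_q}(2^{4i+j})| \le |v_1^{4i}\eta^j|$ by Proposition~\ref{Prop:16NullFinite}, equality of bidegrees forces $v_1^{4i}\eta^j \in M^{\mathbb{F}_q}(2^{4i+j})$.

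The main obstacle I expect is step (2) — controlling the minimality of $N$, i.e. verifying that one cannot detect $2^{4i+j}$ against any strictly smaller skeleton $\underline{L}^\infty_{-N'}$ with $N' < N$. Over $\mathbb{C}$ this is handled by the Atiyah-Hirzebruch / $16$-nullity argument in \cite{Qui19a}, and Proposition~\ref{Prop:16NullFinite} was proved precisely to replicate this over $\mathbb{F}_q$; but one must be careful that the extra summands in $\pi_{**}^{\mathbb{F}_q}(S^{0,0})$ coming from $K^M_*(\mathbb{F}_q)/2$ (the $u$-multiples when $q\equiv 1 \bmod 4$, the $\rho$-multiples and $\tau^{2i+1}\rho$-classes when $q \equiv 3 \bmod 4$, as catalogued in the remarks before Proposition~\ref{Prop:16NullFinite}) do not create new classes in the relevant bidegrees of $\pi_{**}^{\mathbb{F}_q}(\Sigma^{1,0}\underline{L}^\infty_{-N'})$ that would let the composite survive one skeleton lower — these base-change to zero over $\overline{\mathbb{F}}_q$, so the AHSS differential argument of Proposition~\ref{Prop:16NullFinite} (which tracks $Sq^1, Sq^2, Sq^4, Sq^8$ on $H^{**}(L_m \wedge D^{\mathbb{F}_q}L_m)$) must be reused to kill them. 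A secondary subtlety is the bookkeeping of the two cells $S^{-2N+1,-N} \vee S^{-2N+2,-N+1}$ in the target of Definition~\ref{Def:MotMI}: one must check which cell the dashed arrow hits nontrivially and confirm the convention in Definition~\ref{Def:MotMI} selects the cell carrying the $v_1^{4i}$-family class; this matches the $j$-dependent behavior in \cite[Thm.~5.12]{Qui19a} and should go through unchanged since the cell structure of $\underline{L}^\infty_{-N}$ is independent of the base field by Lemma~\ref{Lem:ProjBaseChange}.
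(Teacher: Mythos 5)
Your steps (2)--(4) are essentially the paper's proof: the argument there is (i) the isomorphism $\pi_{**}^{\overline{\f}_q}(S^{0,0}) \cong \pi_{**}^{\c}(S^{0,0})$ of Theorem \ref{Thm:WO} identifies $M^{\overline{\f}_q}(2^{4i+j})$ with the known $\c$-motivic answer of \cite{Qui19a}; (ii) base-change along $\f_q \to \overline{\f}_q$ and Lemma \ref{Lem:Squeeze} give the bound $|M^{\f_q}(2^{4i+j})| \le |M^{\overline{\f}_q}(2^{4i+j})|$; (iii) Proposition \ref{Prop:16NullFinite} together with low-dimensional computations, exactly as in \cite[Thm.~2.17]{MR93} and \cite[Sec.~5]{Qui19b}, rules out detection on any smaller skeleton, so the bound is an equality; and (iv) by Section \ref{Section:FiniteFields} the classes of Theorem \ref{Thm:FamiliesFiniteFields} are the only classes in the relevant bidegrees that base-change to the $\overline{\f}_q$-invariants, so the last clause of Lemma \ref{Lem:Squeeze} places them in $M^{\f_q}(2^{4i+j})$. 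Your discussion of the extra $u$- and $\rho$-multiples and of the two-cell target convention matches the concerns the paper actually handles.

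Two corrections. First, your step (1) --- producing the nontrivial composite directly over $\f_q$ from a lift $\widetilde{\eta^j}$ and its ``$v_1^{4i}$-translate,'' i.e.\ from a $v_1^4$-self-map of $S^{0,0}/2$ and an image-of-$J$ picture over $\f_q$ --- invokes input the paper neither constructs nor wants: no motivic $v_1^4$-self-map on the mod two Moore spectrum over $\f_q$ is available here (the introduction emphasizes that the Mahowald-invariant construction is of interest precisely because it avoids such a self-map), and the motivic $J$-homomorphism is described only as conjectural. If your proof needed this step it would have a genuine gap; fortunately it is redundant, since nontriviality of the $\f_q$-composite at the skeleton realizing the $\overline{\f}_q$-invariant already follows from base-change (a trivial composite over $\f_q$ would base-change to a trivial one over $\overline{\f}_q$), which is exactly how Lemma \ref{Lem:Squeeze} is used. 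Second, in step (4) you credit the inequality $|M^{\f_q}(2^{4i+j})| \le |v_1^{4i}\eta^j|$ to Proposition \ref{Prop:16NullFinite}; the roles are reversed. That inequality is the output of Lemma \ref{Lem:Squeeze}, whereas Proposition \ref{Prop:16NullFinite} (plus the low-dimensional computations that dispose of the residual factor $2^j$) is what shows the composite is null on all strictly smaller skeleta, hence that the inequality is an equality and the minimal $N$ agrees with the one over $\overline{\f}_q$.
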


\begin{proof}
The proof is analogous to the proof of \cite[Thm. 5.12]{Qui19b}; we summarize the necessary changes below.
\begin{enumerate}
\item The isomorphism $\pi_{**}^{\overline{\f}_q}(S^{0,0}) \cong \pi_{**}^\c(S^{0,0})$ implies that $M^{\overline{\f}_q}(2^{4i+j})$ has the desired form. Base-change along $\f_q \to \overline{\f}_q$ and Lemma \ref{Lem:Squeeze} then give the upper bound $|M^{\f_q}(2^{4i+j})| \leq |M^{\overline{\f}_q}(2^{4i+j})| = |M^\c(2^{4i+j})|$. Moreover, if bound is tight, then $M^{\f_q}(2^{4i+j})$ must base-change to $M^{\overline{\f}_q}(2^{4i+j})$. By Section \ref{Section:FiniteFields}, the classes in the theorem statement are the only classes which do this.
\item Proposition \ref{Prop:16NullFinite} and low-dimensional computations show that the inequality is an equality; compare with the proof of \cite[Thm. 2.17]{MR93}. 
\end{enumerate}
\end{proof}

\begin{thm}
Let $ i \geq 0$ and let $j \in \{2,3\}$. The $\q$-Mahowald invariant of $2^{4i+j}$ is given by
\[
M^\q((2+\rho \eta)^{4i+j}) \ni \begin{cases}
v^{4i}_1 \tau \eta^2 \quad & j=2, \\
v^{4i}_1 \tau \eta^3 \quad & j=3.
\end{cases}
\]
\end{thm}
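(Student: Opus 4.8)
The plan is to mirror the proof of Theorem \ref{Thm:InitialMotMI2i} over $\f_q$, substituting the $\q$-specific inputs that were assembled in Section \ref{Section:Rationals}. First I would recall that by Theorem \ref{Thm:WO} (or rather the identification $\pi_{**}^{\overline{\q}}(S^{0,0}) \cong \pi_{**}^\c(S^{0,0})$ in the relevant range, via \cite{Wil18}) the $\c$-motivic computation from \cite[Thm. 5.12]{Qui19a} gives $v_1^{4i}\tau\eta^2 \in M^{\overline{\q}}((2+\rho\eta)^{4i+2})$ and $v_1^{4i}\tau\eta^3 \in M^{\overline{\q}}((2+\rho\eta)^{4i+3})$, since $\rho$ maps to $0$ over $\overline{\q}$ and $(2+\rho\eta)$ maps to $2$. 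Applying Lemma \ref{Lem:Squeeze} to the composite $\q \to \overline{\q}$, using that $\underline{L}^\infty_{-N}$ is preserved under base-change (Lemma \ref{Lem:ProjBaseChange}), yields the upper bound $|M^\q((2+\rho\eta)^{4i+j})| \leq |M^{\overline{\q}}((2+\rho\eta)^{4i+j})|$ for $j \in \{2,3\}$, and moreover tells us that if this bound is attained then the $\q$-motivic Mahowald invariant must base-change to the $\overline{\q}$-motivic one. By the results of Section \ref{Section:Rationals}, the classes $v_1^{4i}\tau\eta^2$ and $v_1^{4i}\tau\eta^3$ are the only classes in $\pi_{**}^\q(S^{0,0})$ in the correct bidegrees which base-change appropriately (they are detected by $P^i\tau h_1^2$ and $P^i\tau h_1^3$, which survive the $\q$-motivic Adams spectral sequence), so it suffices to prove that the bound is an equality.

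For the reverse inequality, I would follow the argument of \cite[Thm. 2.17]{MR93} as adapted in \cite[Sec. 5]{Qui19b}. The point is that $M^\q((2+\rho\eta)^{4i+j})$ is computed by pushing $(2+\rho\eta)^{4i+j}$ along $S^{0,0} \simeq \Sigma^{1,0}\underline{L}^\infty_{-\infty} \to \Sigma^{1,0}\underline{L}^\infty_{-N}$ and finding the minimal $N$ for which the composite is nontrivial. The nulhomotopy input is exactly Proposition \ref{Prop:16NullRationals}: the degree $16$ map is null on the subcomplex $L_m$ of $\underline{L}^\infty_{-\infty}$ with cells in dimensions $-5+8m \le d \le -2+8m$. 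This periodicity, together with the compatibility $(2+\rho\eta)^4 \simeq 16$ when restricted to these stunted lens spectra (which follows from the structure of the relevant cells and the relation $\rho\eta$-torsion in this range, again by base-change from the completions), forces the Mahowald invariant of $(2+\rho\eta)^{4i+j}$ to ``see'' cells that are exactly $8i$ further down than the Mahowald invariant of $(2+\rho\eta)^j$. Combined with the low-dimensional computation of $\pi_{**}^\q(S^{0,0})$ — available in the relevant range via the motivic Hasse principle \cite{OO13} and the $\rho$-Bockstein computations of Section \ref{Section:Rationals} — one identifies the $j=2$ and $j=3$ cases directly, pinning down $|M^\q((2+\rho\eta)^{4i+j})| = |v_1^{4i}\tau\eta^j|$ and hence, by the last sentence of Lemma \ref{Lem:Squeeze}, membership $v_1^{4i}\tau\eta^j \in M^\q((2+\rho\eta)^{4i+j})$.

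The main obstacle I anticipate is verifying that the lower bound argument genuinely works over $\q$ rather than only over $\r$ or the completions $\q_\nu$: the argument in \cite{Qui19b} over $\Spec(\r)$ uses specific low-dimensional $\r$-motivic homotopy computations and the $\rho$-Bockstein spectral sequence, and transporting this to $\q$ requires that the relevant homotopy groups of the smash product $X := L_m \wedge D^\q L_m$ inject into the corresponding product over all completions $\prod_\nu \pi_{**}^{\q_\nu}$ in the tridegrees that matter, with no differentials interfering. This is precisely the content of the Hasse-principle argument used in the proof of Proposition \ref{Prop:16NullRationals}, so the key step is to check that the same injectivity-plus-no-differentials reasoning applies not just to the nulhomotopy statement but also to the identification of which class detects the Mahowald invariant — i.e., that no ``new'' $\q$-motivic class (one not visible over $\r$ or over any $\q_\nu$) can appear in the obstruction groups. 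Granting that, the proof assembles formally from Proposition \ref{Prop:16NullRationals}, Lemma \ref{Lem:Squeeze}, and the computations of Section \ref{Section:Rationals}, exactly parallel to the $\f_q$ case.
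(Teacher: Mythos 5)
Your proposal follows essentially the same route as the paper's proof: the upper bound comes from Lemma \ref{Lem:Squeeze} applied to base-change along $\q \to \overline{\q}$ together with the identification $\pi_{**}^{\overline{\q}}(S^{0,0}) \cong \pi_{**}^{\c}(S^{0,0})$ and the fact (from Section \ref{Section:Rationals}) that $v_1^{4i}\tau\eta^2$ and $v_1^{4i}\tau\eta^3$ are the only candidate classes base-changing correctly, while equality is forced by Proposition \ref{Prop:16NullRationals} and low-dimensional computations in the style of \cite[Thm.\ 2.17]{MR93} as adapted in \cite{Qui19b}. The Hasse-principle concern you raise is exactly what the proof of Proposition \ref{Prop:16NullRationals} already handles, so your argument is correct and matches the paper.
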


\begin{proof}
We follow the same proof idea as above. 
\begin{enumerate}
\item Lemma \ref{Lem:Squeeze} applied to the isomorphism $\pi_{**}^{\overline{\q}}(S^{0,0}) \cong \pi_{**}^\c(S^{0,0})$ implies that $M^{\overline{\q}}((2+\rho\eta)^{4i+j})$ has the desired form. Base-change along $\q \to \overline{\q}$ and Lemma \ref{Lem:Squeeze} then give the upper bound $|M^{\q}(2^{4i+j})| \leq |M^{\overline{\q}}(2^{4i+j})| = |M^\c(2^{4i+j})|$. Moreover, if this inequality is actually an equality, then $M^{\q}(2^{4i+j})$ must base-change to $M^{\overline{\q}}(2^{4i+j})$; by Section \ref{Section:Rationals}, the classes in the theorem statement are the only possible classes with this property.
\item Proposition \ref{Prop:16NullRationals} and low-dimensional computations show that the inequality is an equality; compare with the proof of \cite[Thm. 2.17]{MR93}. 
\end{enumerate}
\end{proof}

\subsection{$F$-Mahowald invariants of $2^i$}\label{Section:GeneralMI}
We now apply the base-change functor, Lemma \ref{Lem:Squeeze}, and our computations over $\Spec(\f_q)$, $\Spec(\q)$, and $\Spec(\c)$ to compute the $F$-Mahowald invariants of $2^i$ for any field $F$ of characteristic not two. 

\begin{notn}
We will use the notation
$$k \overset{f}{\to} F \overset{g}{\to} L$$
to denote any of the following sequences of field extensions:
$$\f_q \to F \to \overline{F} \quad \text{ or } \quad \q \to F \to \overline{F}.$$
\end{notn}

The classes in the $F$-motivic Adams spectral sequence may be defined by comparison with the $\rho$-Bockstein spectral sequence in $SH(k)$ and $SH(L)$. 

\begin{defin} Let $F$ be any field as above. 
\begin{enumerate}
\item Suppose $\chara(F) > 2$. We define $v^{4i}_1 \eta$, $v^{4i} \tau \eta^2$, $v^{4i}_1 \tau \eta^3$, and $v^{4i}_1 8\sigma$ to be the classses in $\pi_{**}^{F}(S^{0,0})$ detected by $P^i h_1$, $P^i \tau h_1^2$, $P^i \tau h_1^3$, and $P^i h_0^3 h_3$, respectively, in the $F$-motivic Adams spectral sequence. 
\item Suppose $F$ be a field with $\chara(F) = 0$. We define $v^{4i}_1 \tau \eta^2$ and $v^{4i}_1 \tau \eta^3$ to be the classes in $\pi_{**}^F(S^{0,0})$ detected by $P^i \tau h_1^2$ and $P^i \tau h_1^3$, respectively, in the $F$-motivic Adams spectral sequence.  
\end{enumerate}
\end{defin}

\begin{thm}\label{Thm:FamiliesGeneral}
Let $F$ be any field as above. 
\begin{enumerate}
\item Suppose $\chara(F) > 2$. The classes $v^{4i}_1 \eta$, $v^{4i}_1 \tau \eta^2$, $v^{4i}_1 \tau \eta^3$, and $v^{4i}_1 8\sigma$ in $\pi_{**}^{F}(S^{0,0})$ are nonzero. 
\item Suppose $\chara(F) =0$. The classes $v^{4i}_1 \tau \eta^2$ and $v^{4i}_1 \tau \eta^3$ in $\pi_{**}^{F}(S^{0,0})$ are nonzero. 
\end{enumerate}
\end{thm}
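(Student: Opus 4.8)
The plan is to reduce the nonvanishing of these $F$-motivic families to the already-established nonvanishing over the prime subfield $k$ and over the algebraic closure $\overline F$, using the compatibility of the motivic Adams spectral sequence with base-change along the tower $k \overset{f}{\to} F \overset{g}{\to} \overline F$. First I would record that base-change $f^* : SH(k) \to SH(F)$ and $g^* : SH(F) \to SH(\overline F)$ each induce maps of motivic Adams spectral sequences, and that the periodicity classes $P^i h_1$, $P^i \tau h_1^2$, $P^i \tau h_1^3$, $P^i h_0^3 h_3$ are compatible under these maps: over $k$ they exist as (matric) Massey products in $Ext^{***}_{A^k}$ by the constructions of Section \ref{Section:Prime} (via $\phi_k$ from Lemma \ref{Lem:KW} when $\chara F > 2$, and via the $\q$-motivic periodicity operator $P_\q$ when $\chara F = 0$), and since the matric Massey product defining $P(-)$ is built from $h_0, h_1, h_3, c_0, \rho$ — all of which are preserved by base-change — the classes $P^i(x)$ map to the classes with the same name in $Ext^{***}_{A^F}$ and then in $Ext^{***}_{A^{\overline F}}$.

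Next I would establish that these classes are permanent cycles in the $F$-motivic Adams spectral sequence. By the earlier analysis (Lemmas \ref{Lem:Extq1}, \ref{Lem:Extq3} and their $\q_\nu$- and $\q$-analogs), the relevant tridegrees of $Ext^{***}_{A^k}$ lie in the isomorphism range of \cite[Prop. 5.4]{Qui19a}, and the same is true over $\overline F$ since $\pi_{**}^{\overline F}(S^{0,0}) \cong \pi_{**}^\c(S^{0,0})$ in the relevant range by Theorem \ref{Thm:WO}. Since Adams differentials preserve motivic weight and drop the stem by one, and since the possible targets already vanish over both $k$ and $\overline F$, a short argument with the $\rho$-Bockstein spectral sequence computing $Ext^{***}_{A^F}$ from $Ext^{***}_{A^k}$ (or from $Ext^{***}_{A^{\overline F}}$ after extending scalars through $\c$) shows there are no classes in higher $\rho$-Bockstein filtration in these tridegrees, hence no possible Adams differentials into or out of them over $F$ either; so the classes are permanent cycles over $F$, detecting well-defined homotopy classes $v^{4i}_1 \eta$, etc., in $\pi_{**}^F(S^{0,0})$.

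Finally I would prove nonvanishing. Apply $g^*$ to the $F$-motivic Adams spectral sequence: the class $P^i h_1$ (resp.\ $P^i \tau h_1^2$, etc.) over $F$ maps to the class with the same name over $\overline F$, which is nonzero in $Ext^{***}_{A^{\overline F}}$ and is a permanent cycle detecting the nonzero class $v^{4i}_1 \eta \in \pi_{**}^{\overline F}(S^{0,0})$ (Theorem \ref{Thm:WO} plus the $\c$-motivic facts). Since $g^*$ induces a map of Adams spectral sequences and sends our $F$-class to a nonzero class, the $F$-class is nonzero in $Ext^{***}_{A^F}$; being a permanent cycle (previous paragraph), it detects a nonzero element of $\pi_{**}^F(S^{0,0})$, namely the named family member. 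Part (2), for $\chara F = 0$, is identical using $\q \to F \to \overline F$ and the $\q$-motivic constructions of Section \ref{Section:Rationals} in place of the finite-field ones. The main obstacle is the middle step: controlling the $\rho$-Bockstein spectral sequence for $Ext^{***}_{A^F}$ over a \emph{general} field $F$ — one does not have an explicit presentation of $K^M_*(F)/2$ as in the prime-field cases, so one must argue abstractly that the relevant tridegrees remain in the isomorphism range and that no new classes in higher filtration appear, using only that $K^M_*(F)/2$ is generated in degrees that, when combined with the vanishing lines of \cite[Prop. 5.4]{Qui19a}, cannot reach the target tridegrees of any differential; making this quantitative is the crux.
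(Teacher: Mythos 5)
Your overall strategy is the paper's: define the $F$-classes as images under base-change along $k \to F$, then detect nontriviality by pushing forward along $F \to \overline F$ and invoking Theorem \ref{Thm:WO}. The genuine problem is your middle step. To show the classes are permanent cycles over $F$ you propose to verify that the \emph{target} tridegrees of potential Adams differentials vanish in $Ext^{***}_{A^F}$, via a $\rho$-Bockstein analysis over a general field $F$ --- and you concede you cannot make this quantitative. Indeed you cannot, in the form stated: by Lemma \ref{Lem:KW}, $Ext_{A^F}$ involves $Ext_{A^\r}\otimes_{\f_2[\rho]}K^M_*(F)/2$ together with a $Tor$-term, and for arbitrary $F$ the mod-two Milnor K-theory is not under control, so there is no way to clear those groups. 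But the step is also unnecessary: the $Ext$-classes over $F$ are \emph{defined} as $f^*$ of classes over the prime field $k$ which are already known to be permanent cycles (Lemmas \ref{Lem:Extq1}, \ref{Lem:Extq3} and their $\q_\nu$-/$\q$-analogues), and since $f^*$ induces a map of motivic Adams spectral sequences, one has $d_r(f^*x)=f^*(d_r x)=0$ on every page. No vanishing of target groups over $F$ is needed; this is exactly how the paper argues ("permanent cycles by base-change along $k\to F$").

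A secondary issue sits in your last paragraph: "nonzero in $Ext_{A^F}$" together with "permanent cycle" does not yet yield a nonzero homotopy class, since the class could a priori be the \emph{target} of a differential over $F$ and die at $E_\infty$; you had delegated the exclusion of incoming differentials to the same unestablished $\rho$-Bockstein claim. The repair is the one the paper uses: the image of the class under $g^*$ in the $\overline F$-motivic Adams spectral sequence survives nonzero to $E_\infty$, because it detects a nonzero element of $\pi_{**}^{\overline F}(S^{0,0})\cong\pi_{**}^{\c}(S^{0,0})$ (Theorem \ref{Thm:WO} and the $\c$-motivic results of \cite{Qui19a}). Since $g^*$ induces a map of spectral sequences, a class over $F$ that became a boundary on some page would map to zero on all later pages over $\overline F$, a contradiction; hence the $F$-class is nonzero at $E_\infty$ and detects a nonzero element of $\pi_{**}^F(S^{0,0})$. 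With these two corrections your argument reduces to the paper's short proof, with no need to analyze $Ext_{A^F}$ or $K^M_*(F)/2$ at all.
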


\begin{proof}
The classes are all permanent cycles by base-change along $k \to F$ and the results of Section \ref{Section:Prime}. They are nonzero by base-change along $F \to L$ along with the isomorphism $\pi_{**}^{L}(S^{0,0}) \cong \pi_{**}^\c(S^{0,0})$ from Theorem \ref{Thm:WO}.
\end{proof}

\begin{thm}\label{Thm:MotMI2i}
Suppose that $\chara(F) > 2$. Then the $F$-motivic Mahowald invariant of $(2+\rho \eta)^i$ is given by
$$
M^F((2+\rho\eta)^{4i+j}) \ni \begin{cases}
v^{4i}_1 \eta \quad & \text{ if } j =1, \\
v^{4i}_1 \tau \eta^2 \quad & \text{ if } j=2,\\
v^{4i}_1 \tau \eta^3 \quad & \text{ if } j=3, \\
v^{4i}_1 8\sigma \quad & \text{ if } j=4.
\end{cases}
$$
Suppose that $\chara(F) = 0$. Then the $F$-motivic Mahowald invariant of $(2+\rho \eta)^i$ is given by
$$
M^F((2 + \rho \eta)^{4i+j} \ni \begin{cases}
v^{4i}_1 \tau \eta^2 \quad & \text{ if } j=2,\\
v^{4i}_1 \tau \eta^3 \quad & \text{ if } j=3. 
\end{cases}
$$
\end{thm}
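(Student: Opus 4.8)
The plan is to deduce Theorem \ref{Thm:MotMI2i} by a base-change sandwich argument, exactly parallel to the strategy outlined for Theorem \ref{Thm:FamiliesGeneral} but now applied to Mahowald invariants via Lemma \ref{Lem:Squeeze} and its Corollary. Recall the setup: for $\chara(F) = q > 2$ we have a composite of field extensions $\f_q \overset{f}{\to} F \overset{g}{\to} \overline{F}$, and for $\chara(F) = 0$ we have $\q \overset{f}{\to} F \overset{g}{\to} \overline{F}$. The target classes $v_1^{4i}\eta$, $v_1^{4i}\tau\eta^2$, $v_1^{4i}\tau\eta^3$, $v_1^{4i}8\sigma$ are defined over every field in sight (algebraically closed, prime, and $F$) as the homotopy classes detected by the appropriate (matric) Massey products in the motivic Adams spectral sequence, and by Theorem \ref{Thm:FamiliesGeneral} they are nonzero over $F$, hence remain nonzero (and "have the same name") after base change along $g$ to $\overline{F}$.

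The key input, already in hand, is that the Mahowald invariants agree at the two ends of the sandwich in the relevant congruence classes. Over $\overline{F}$ we have $\pi_{**}^{\overline{F}}(S^{0,0}) \cong \pi_{**}^{\c}(S^{0,0})$ by Theorem \ref{Thm:WO}, so $M^{\c}(2^{4i+j})$ from \cite{Qui19a} transports to give $M^{\overline{F}}((2+\rho\eta)^{4i+j})$ (note $\rho = 0$ in $\m_2^{\overline F}$, so $2+\rho\eta$ becomes just $2$). Over the prime field, Theorem \ref{Thm:InitialMotMI2i} gives $M^{\f_q}(2^{4i+j}) \ni v_1^{4i}(-)$ for all $j$, and the analogous theorem over $\Spec(\q)$ gives $M^{\q}((2+\rho\eta)^{4i+j}) \ni v_1^{4i}(-)$ for $j \equiv 2,3 \bmod 4$. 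So first I would record that $\beta := g^*(\widetilde{\beta})$ lies in $M^{\overline{F}}((2+\rho\eta)^{4i+j})$, where $\widetilde{\beta}$ is the candidate class over $F$ named in the theorem statement and $g^*(\widetilde\beta)$ is the class with the same name over $\overline F$. Then I would apply Lemma \ref{Lem:Squeeze} (or its Corollary directly) to the composite $k \to F \to \overline{F}$: since $\beta'' := g^*(f^*(v_1^{4i}(-)))$ detects the same-named class over $\overline F$ and $\beta := f^*(v_1^{4i}(-))$ (an element of $\pi^k_{**}$) satisfies $\beta \in M^k((2+\rho\eta)^{4i+j})$ by Theorem \ref{Thm:InitialMotMI2i}, the Corollary immediately yields $\widetilde\beta \in M^F((2+\rho\eta)^{4i+j})$.

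More carefully, to invoke the Corollary as stated I need witnesses at all three levels: $\alpha = (2+\rho\eta)^{4i+j}$, $\beta = v_1^{4i}(-)$ over $k$; $\alpha' = (2+\rho\eta)^{4i+j}$, $\beta' = \widetilde\alpha$ over $F$; and $\alpha'' = (2+\rho\eta)^{4i+j}$, $\beta'' = $ (same-named class) over $\overline F = L$. The base-change compatibilities $g^*(\alpha') = \alpha''$, $g^*(\beta') = \beta''$, $f^*(\alpha) = \alpha'$, $f^*(\beta) = \beta'$ all hold because $2+\rho\eta$ and $\rho$ are defined over the prime field and pull back compatibly, and because the Adams-spectral-sequence detecting classes $P^i h_1$ etc. are defined compatibly under base change (this is exactly how the families were set up in Section \ref{Section:Prime} and Definition in Section \ref{Section:GeneralMI}). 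The hypothesis $\beta \in M^k(\alpha)$ is Theorem \ref{Thm:InitialMotMI2i} (prime fields), and $\beta'' \in M^L(\alpha'')$ is the transported $\c$-motivic computation via Theorem \ref{Thm:WO}. The Corollary then delivers $\beta' = \widetilde\alpha \in M^F(\alpha')$, which is precisely the claim. One must split by the residue characteristic: in the case $\chara(F) = 0$ only $j \equiv 2,3$ are available at the $\q$-level, which is why the characteristic-zero half of the theorem is restricted to those congruence classes.

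The main obstacle is not the logical structure — which is a clean squeeze — but verifying that the hypotheses of the Corollary genuinely apply, in particular that over $\overline{F}$ the class $\beta''$ we write down really is in $M^{\overline F}((2+\rho\eta)^{4i+j})$ and that its name matches. This requires knowing the exact $\c$-motivic Mahowald invariant computation of \cite[Thm. 5.12]{Qui19a}, confirming $2+\rho\eta \mapsto 2$ under $\rho = 0$, and checking that the Squeeze Lemma's dimension bookkeeping (the stunted lens spectra $\underline L^\infty_{-N}$ are preserved under base change by Lemma \ref{Lem:ProjBaseChange}) is consistent with minimality of $N$ at the $F$-level — but since $|M^F(\alpha')| \le |\beta'| = |M^{\c}(\alpha)|$ and the reverse bound follows from the $16$-nullity Propositions \ref{Prop:16NullFinite}, \ref{Prop:16NullpAdic}, \ref{Prop:16NullRationals} feeding into Theorem \ref{Thm:InitialMotMI2i}, the inequalities pinch to an equality and the last clause of Lemma \ref{Lem:Squeeze} gives membership rather than merely a bound. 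Thus the proof is essentially a bookkeeping exercise chaining Lemma \ref{Lem:Squeeze}, Theorem \ref{Thm:WO}, Theorem \ref{Thm:FamiliesGeneral}, and Theorem \ref{Thm:InitialMotMI2i} together.
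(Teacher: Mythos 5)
Your proposal is correct and follows essentially the same route as the paper: the paper's proof is exactly the squeeze $|M^k(2^i)| \leq |M^F(2^i)| \leq |M^L(2^i)|$ from Lemma \ref{Lem:Squeeze} (packaged for the composite $k \to F \to \overline{F}$ in its Corollary), with the left end identified by Theorem \ref{Thm:InitialMotMI2i} and the right end by Theorem \ref{Thm:WO} together with the $\c$-motivic computation of \cite{Qui19a}, forcing equality and hence membership. Your bookkeeping with the Corollary's witnesses $\alpha, \alpha', \alpha''$ and $\beta, \beta', \beta''$, and the restriction to $j \equiv 2,3$ in characteristic zero, matches the paper's argument.
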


\begin{proof}
Applying Lemma \ref{Lem:Squeeze} shows that 
$$|M^k(2^i)| \leq |M^F(2^i)| \leq |M^L(2^i)|.$$
Theorem \ref{Thm:InitialMotMI2i} identifies the left-hand side and Theorem \ref{Thm:WO} identifies the right-hand side with $|M^\c(2^i)|$ which was computed in \cite{Qui19a}. These sides agree; the theorem follows. 
\end{proof}

\bibliographystyle{plain}
\bibliography{master}

\end{document}